\documentclass{article}

\usepackage{amsfonts, amsmath, amssymb, amsthm, amscd}
\usepackage{ascmac}
\usepackage{verbatim}
\usepackage[dvips]{graphicx}
\usepackage{graphics}
\usepackage[all,2cell]{xypic}
\objectmargin+{1mm}
\labelmargin+{0.8mm}
\SelectTips{cm}{12}

\setlength{\topmargin}{0in}
\setlength{\oddsidemargin}{0.35in}
\setlength{\evensidemargin}{0.35in}
\setlength{\textwidth}{5.7in}
\setlength{\textheight}{8.7in}
\setlength{\parskip}{3mm}


\theoremstyle{plain}  
\newtheorem{thm}{Theorem}[section]

\newtheorem{cor}[thm]{Corollary}
\newtheorem{lem}[thm]{Lemma}
\newtheorem{prop}[thm]{Proposition}

\theoremstyle{definition}

\newtheorem{df}[thm]{Definition}
\newtheorem{ex}[thm]{Example}

\newtheorem{nt}[thm]{Notations}

\newtheorem{rem}[thm]{Remark}

\newtheorem{para}[thm]{}
\newtheorem{lemdf}[thm]{Lemma-Definition}

\theoremstyle{remark}
\newtheorem{ppara}{}[subsection]
\newtheorem*{claim}{Claim}


\DeclareMathOperator{\id}{id}
\DeclareMathOperator{\isoto}{\overset{\scriptstyle{\sim}}{\to}}
\DeclareMathOperator{\rinf}{\rightarrowtail}

\DeclareMathOperator{\rdef}{\twoheadrightarrow}

\DeclareMathOperator{\rinc}{\hookrightarrow}

\newcommand{\onto}[1]{\stackrel{#1}{\to}}

\newcommand{\ssm}{\smallsetminus}
\renewcommand{\coprod}{\sqcup}

\renewcommand{\geqq}{\geq}


\newcommand{\Ker}{\operatorname{Ker}}
\newcommand{\im}{\operatorname{Im}}
\newcommand{\coker}{\operatorname{Coker}}

\newcommand{\Cone}{\operatorname{Cone}}

\newcommand{\Typ}{\operatorname{Typ}}


\newcommand{\adj}{\operatorname{adj}}

\newcommand{\grade}{\operatorname{grade}}

\newcommand{\rank}{\operatorname{rank}}

\newcommand{\Supp}{\operatorname{Supp}}


\DeclareMathOperator{\Ob}{Ob}

\DeclareMathOperator{\op}{op}


\DeclareMathOperator{\Hom}{Hom}

\DeclareMathOperator{\Tot}{Tot}

\DeclareMathOperator{\Homo}{H}

\newcommand{\bbZ}{\operatorname{\mathbb{Z}}}


\DeclareMathOperator{\fA}{\mathfrak{A}}
\DeclareMathOperator{\fa}{\mathfrak{a}}

\DeclareMathOperator{\fb}{\mathfrak{b}}

\DeclareMathOperator{\fc}{\mathfrak{c}}
\DeclareMathOperator{\fd}{\mathfrak{d}}

\DeclareMathOperator{\frakj}{\mathfrak{j}}
\DeclareMathOperator{\ff}{\mathfrak{f}}

\DeclareMathOperator{\fh}{\mathfrak{h}}

\DeclareMathOperator{\fP}{\mathfrak{P}}

\DeclareMathOperator{\fU}{\mathfrak{U}}

\DeclareMathOperator{\fx}{\mathfrak{x}}

\DeclareMathOperator{\fz}{\mathfrak{z}}


\DeclareMathOperator{\cA}{\mathcal{A}}
\DeclareMathOperator{\cB}{\mathcal{B}}
\DeclareMathOperator{\cC}{\mathcal{C}}
\DeclareMathOperator{\calD}{\mathcal{D}}

\DeclareMathOperator{\cP}{\mathcal{P}}

\DeclareMathOperator{\cX}{\mathcal{X}}
\DeclareMathOperator{\cY}{\mathcal{Y}}



\newcommand{\adm}{\operatorname{adm}}
\newcommand{\DP}{\operatorname{\mathcal{DP}}}
\newcommand{\Fib}{\operatorname{Fib}}

\newcommand{\CoCub}{\operatorname{\bf CoCub}}
\newcommand{\CoDCub}{\operatorname{\bf CoDCub}}
\newcommand{\Cub}{\operatorname{\bf Cub}}
\newcommand{\DCub}{\operatorname{\bf DCub}}
\newcommand{\Pat}{\operatorname{\bf Pat}}
\newcommand{\Ide}{\operatorname{\bf Ideal}}

\newcommand{\Suc}{\operatorname{Suc}}
\newcommand{\Pre}{\operatorname{Pre}}

\UseAllTwocells
\def\sn{\smallskip\noindent}

\newcommand{\cf}{\textrm{cf.}\;}

\title{What makes a multi-complex exact?}
\author{Satoshi Mochizuki \and Seidai Yasuda}
\date{}

\begin{document}

\maketitle

\begin{abstract}
In this paper, 
we give a sufficient condition which makes the total complex of 
a cube exact. 
This can be regarded as 
a variant of the Buchsbaum-Eisenbud theorem 
which gives a characterization of what makes a complex of 
finitely generated free modules exact in terms of the grade 
of the Fitting ideals of boundary maps of the complex.
\end{abstract}

\section*{Introduction}
\label{sec:Intro} 

\sn
In the celebrated paper \cite{BE73}, 
Buchsbaum and Eisenbud gave 
a necessary and sufficient condition for 
a complex $x$ of finitely generated free modules to be 
a resolution of the $0$-th homology group $\Homo_0 x$ of $x$ 
(we call such a complex {\bf $0$-spherical}) 
in terms of the {\it grade} of the {\it Fitting ideals} of 
boundary morphisms of the complex $x$. 
(See Theorem~\ref{thm:BEthm} and also \cite[1.4.2]{BJ93}.) 
The main goal of this paper 
is to give a variant of this theorem for certain multi-complexes. 

\sn
To state our problem precisely, 
we need to consider what is a natural generalization the notion 
of $0$-spherical complexes for multi-complexes. 
In this paper we restrict ourselves to 
a special class of multi-complexes which we call {\it cubes}. 
We take the position that admisible cubes introduced in \cite{Kos} 
are the counterparts of $0$-spherical complexes. 
In the papers \cite{Kos}, \cite{MS13} and \cite{Moc13b}, 
we studied resolution of modules by admissible cubes of free modules 
and applied them to calculate the derived categories and the $K$-theory of 
modules. 

\sn
To explain our result precisely, 
we need to introduce some notation. 
Let $S$ be a finite set. 
We denote the cardinality of the set $S$ by $\# S$. 
We write $\cP(S)$ for the {\bf power set of $S$}. 
We regard $\cP(S)$ as a category whose 
objects are subsets of $S$ and whose 
morphisms are inclusions. 
An {\bf $S$-cube} in a category $\calD$ is 
a contravariant functor 
from $\cP(S)$ to $\calD$. 
Let $x$ be an $S$-cube in $\calD$. 
For any $T\in\cP(S)$, 
we denote $x(T)$ by $x_T$ and call it the 
{\bf vertex of $x$} ({\bf at $T$}). 
For any $k \in T$, 
we also write $d^{x,k}_T$ or 
shortly $d^k_T$ 
for $x(T\ssm\{k\} \rinc T)$ 
and call it 
the ({\bf $k$-}){\bf boundary morphism of $x$} ({\bf at $T$}). 
(See Definition~\ref{df:cubes} and 
Example~\ref{ex:various type cubes}.) 
Let $F^k$ and $B^k$ be the order preserving maps from 
$\cP(S\ssm\{k\})$ to $\cP(S)$ defined 
by sending a subset $U$ of $S\ssm\{k\}$ to $U$ and $U\cup\{k\}$ 
respectively. 
For any $S$-cube $x$ 
in a category $\calD$ 
and any $k\in S$, 
the compositions 
$xB^k$ and $xF^k$ are called 
the {\bf backside $k$-face of $x$} and 
the {\bf frontside $k$-face of $x$} respectively. 
By a {\bf face} of $x$, 
we mean any backside or frontside $k$-face of $x$. 
(See Example~\ref{ex:Faces of cubes}.) 
Let $S$ be a non-empty finite set and 
$x$ an $S$-cube in an additive category $\cB$. 
If we regard $x$ as a multi-complex where $x_{\emptyset}$ is in 
degree $(0,\cdots,0)$, we will take its total complex $\Tot x$. 
(See Notations~\ref{nt:tot complex}.) 

\begin{ex}
[\bf See Notations~\ref{nt:A-lincat} and Example~\ref{ex:abs typ cubes}]
\label{ex:A-lincat, abs typ cubes}
Let $A$ be a commutative ring with unit and 
$\cC$ an abelian category enriched over the category of $A$-modules. 
Namely for any pair of objects $x$ and $y$ in $\cC$, 
the set of morphisms from $x$ to $y$, $\Hom_{\cC}(x,y)$ has 
a structure of $A$-module and 
the composition of morphisms 
$\Hom_{\cC}(x,y)\times \Hom_{\cC}(y,z) \to \Hom_{\cC}(x,z)$ 
is an $A$-bilinear homomorphism for any objects $x$, $y$ and $z$ in $\cC$. 
In particular, it induces a homomorphism of $A$-modules 
$\Hom_{\cC}(x,y)\otimes_A \Hom_{\cC}(y,z) \to \Hom_{\cC}(x,z) $. 
For any object $x$ in $\cC$ and any element $a$ in $A$, 
we write $a_x$ for the morphism $a\id_x:x\to x$. 
A typical example of $\cC$ is the category of $A$-modules. 
Let $\ff_S=\{f_s\}_{s\in S}$ a family of elements in $A$ and $x$ an object in $\cC$. 
We define the $S$-cube $\Typ(\ff_S;x)$ 
in $\cC$, 
called the {\bf typical cubes associated with 
$\ff_S$ and $x$}, as follows. 
For any $T\in\cP(S)$ and any element $t$ in $T$, 
we put $\Typ(\ff_S;x)_T=x$ and $d_T^{t,\Typ(\ff_S;x)}:={(f_t)}_x$. 
In particular, if $\cC$ is the category of $A$-modules, then 
$\Tot \Typ(\ff_S;A)$ is isomorphic to the usual {\it Koszul complex} associated with 
the family of elements $\ff_S$. 
\end{ex}

\sn
In particular, we study a specific class of cubes in an abelian category 
which is a categorical variant of the notion about regular sequences 
in commutative rings. 
Let $S$ be a finite set and $\cA$ an abelian category. 
Let us fix an $S$-cube $x$ 
in $\cA$. 
For each $k \in S$, 
the {\bf $k$-direction $0$-th homology} of $x$ 
is the $S\ssm\{k\}$-cube $\Homo_0^k(x)$ in $\cA$ 
defined by $\Homo_0^k(x)_T:=\coker d_{T\cup\{k\}}^k$. 
When $\# S=1$, 
we say that $x$ is {\bf admissible} if 
its unique boundary morphism is a monomorphism. 
For $\# S>1$, 
we define the notion of an admissible cube inductively 
by saying that 
$x$ is {\bf admissible} if 
its boundary morphisms are monomorphisms and if 
for every $k$ in $S$, 
$\Homo^k_0(x)$ is admissible. 
(See Definition~\ref{para:admcube}.) 

\sn 
The relationship between admissibility of cubes and the classical notion of 
regular sequences are summed up with the following way. 
For any elements $f_1,\cdots,f_q$ in $A$ and an object $x$ in $\cC$, 
we simply write $x/(f_1,\cdots,f_q)$ for $x/({(f_1)}_x,\cdots,{(f_q)}_x)$. 
Let us fix an object $x$ in $\cC$. 
A sequence of elements $f_1,\cdots,f_q$ in $A$ is 
an {\bf $x$-regular sequence} 
if every $f_i$ is a non-unit in $A$, if 
${(f_1)}_x$ is a monomorphism in $\cC$ and if 
${(f_{i+1})}_{x/(f_1,\cdots,f_i)}$ is a monomorphism for any $1\leq i\leq q-1$. 
A finite family $\{f_s\}_{s\in S}$ of elements in $A$ is an {\bf $x$-sequence} 
if $\{f_s\}_{s\in S}$ forms an $x$-regular sequence 
with respect to every ordering of the members of $\{f_s\}_{s\in S}$. 
Let $\ff_S=\{f_s\}_{s\in S}$ be a family of elements in $A$. 
Then the $S$-cube $\Typ(\ff_S;x)$ is admissible 
if and only if 
the family $\ff_S$ is an $x$-sequence. 
(See Notations~\ref{nt:regular sequences} and Lemma~\ref{lem:char of x-seq}.) 
There are several characterizations of 
admissibility of cubes in an abelian category. 
In particular, 
the admissibility of a cube $x$ gives 
a sufficient condition 
that the cube $x$ is a resolution of 
the $0$-th homology group 
$\Homo_0\Tot x$ of $\Tot x$. 
(See Theorem~\ref{thm:char of adm} for details.)

\sn
In our main theorem, 
we give a sufficient condition of admissibility of cubes. 
To state the main theorem, 
we introduce some terminology about a categorical variant of 
adjugates of matrices. 
The existence of regular adjugates of cubes 
implies the condition about the grade of the Fitting ideals of boundary morphisms 
of complexes in the Buchsbaum-Eisenbud theorem in \cite{BE73}. 
(See Proposition~\ref{prop:relation with BE}.) 
Let $S$ be a finite set and $\cC$ be an abelian category as 
in Example~\ref{ex:A-lincat, abs typ cubes}. 
An {\bf adjugate of an $S$-cube} $x$ in $\cC$ is a pair $(\fa,\fd^{\ast})$ 
consisting of a family of elements $\fa=\{a_s\}_{s\in S}$ in $A$ 
and a family of morphisms 
$\fd^{\ast}=\{d_T^{t \ast}\colon x_{T\ssm\{t\}} \to x_T\}_{T\in\cP(S),t\in T}$ 
in $\cC$ which satisfies the following two conditions.\\
$\mathrm{(i)}$ 
We have the equalities 
$d_T^td_T^{t \ast}={(a_t)}_{x_{T\ssm\{t\}}}$ and 
$d_T^{t \ast}d_T^t={(a_t)}_{x_T}$ 
for any $T\in\cP(S)$ and $t\in T$.\\
$\mathrm{(ii)}$ 
For any $T\in\cP(S)$ and any two distinct elements $a$ and $b\in T$, 
we have the equality 
$d_T^bd_T^{a \ast}=d^{a \ast}_{T\ssm\{b\}}d^b_{T\ssm\{a\}}$. 
An adjugate of an $S$-cube $(\fa,\fd^{\ast})$ is {\bf regular} 
if $\fa$ forms $x_T$-sequence for any $T\in \cP(S)$. 
(See Definition~\ref{df:adjugate of cubes}.) 
Example~\ref{ex:adj cube} shows 
how to relate the notion about adjugates of cubes and 
the classical notion about adjugates of matrices. 
The following theorem is the main theorem in this paper. 

\begin{thm}[\bf A part of Theorem~\ref{thm:main thm}]
\label{introthm}
Let $\cC$ be an abelian category as 
in Example~\ref{ex:A-lincat, abs typ cubes} 
and $x$ be an $S$-cube in $\cC$. 
If  $x$ admits a regular adjugate, 
then $x$ is admissible.
\end{thm}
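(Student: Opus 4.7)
The plan is to proceed by induction on $n := \# S$. For the base case $n = 1$, say $S = \{s\}$, the adjugate identity $d_{\{s\}}^{s \ast} d_{\{s\}}^s = (a_s)_{x_\emptyset}$ combined with the hypothesis that $\{a_s\}$ is an $x_\emptyset$-sequence forces $(a_s)_{x_\emptyset}$ to be monic, hence $d_{\{s\}}^s$ is monic and $x$ is admissible. For the inductive step $n \geq 2$, I would first observe that every boundary $d_T^t$ of $x$ is monic: the identity $d_T^{t \ast} d_T^t = (a_t)_{x_T}$ is monic because $a_t$ can be placed first in an $x_T$-regular sequence. It then remains, for each $k \in S$, to construct a regular adjugate on the $(S \ssm \{k\})$-cube $\Homo_0^k(x)$, after which the inductive hypothesis yields admissibility of $\Homo_0^k(x)$ and hence of $x$.

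For this construction I take $\fa' := \fa \ssm \{a_k\}$. The boundary $d_T^t$ of $x$ descends to a boundary $\Homo_0^k(x)_{T \ssm \{t\}} \to \Homo_0^k(x)_T$ by functoriality of $x$, and the adjugate morphism $\bar d_T^{t \ast}$ comes from $d_T^{t \ast}$ by passing to the cokernel of $d^k_{T \cup \{k\}}$; this descent is afforded by condition (ii) of the original adjugate applied to $T \cup \{k\}$ and the pair $\{t, k\}$, namely $d^k_{T \cup \{k\}} d^{t \ast}_{T \cup \{k\}} = d_T^{t \ast} d^k_{T \ssm \{t\} \cup \{k\}}$. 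Conditions (i) and (ii) for the pair $(\fa', \bar{\fd}^{\ast})$ then pass to the quotient formally. The essential remaining task is to verify regularity: that $\fa'$ forms an $\Homo_0^k(x)_T$-sequence for every $T \subseteq S \ssm \{k\}$.

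The key input is the following lemma, provable by a short diagram chase: given an exact sequence $0 \to N \to M \to M'' \to 0$ in $\cC$ and $d^\ast \colon M \to N$ with $d d^\ast = a \cdot \id_M$ and $d^\ast d = a \cdot \id_N$, if $(f, a)$ is both $M$- and $N$-regular, then $f$ is $M''$-regular. Indeed, if $f m = d(n)$, applying $d^\ast$ gives $f d^\ast(m) = a n$; regularity of $a$ on $N/f N$ yields $n = f n'$; then $f m = f d(n')$, and $M$-regularity of $f$ gives $m = d(n') \in d(N)$. To deduce that an arbitrary ordering $f_1, \ldots, f_{n-1}$ of $\fa'$ is $\Homo_0^k(x)_T$-regular, apply the lemma first to the exact sequence $0 \to x_{T \cup \{k\}} \to x_T \to \Homo_0^k(x)_T \to 0$ with $(f_1, a_k)$, obtaining $f_1$-regularity on $\Homo_0^k(x)_T$. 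Then iterate: the snake lemma applied to multiplication by $f_1$ yields an exact sequence $0 \to x_{T \cup \{k\}}/f_1 \to x_T/f_1 \to \Homo_0^k(x)_T/f_1 \to 0$ still equipped with the induced $\bar d^k$ and $\bar d^{k \ast}$ satisfying the adjugate relations with $a_k$; since $(f_2, a_k)$ is regular on the two left-hand terms by the $x_T$-sequence hypothesis, the lemma applies again, and so on.

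The main obstacle, and the delicate bookkeeping of the argument, is precisely this iteration: ensuring that the adjugate data $(d^k, d^{k \ast}, a_k)$ and the regular-sequence hypotheses needed to apply the lemma descend faithfully through each successive quotient, and that the conclusion is independent of the chosen ordering of $\fa'$. Since the $x_T$-sequence hypothesis is symmetric in orderings, this independence is automatic once the iteration is shown to work for one ordering, completing the verification that $(\fa', \bar{\fd}^{\ast})$ is a regular adjugate of $\Homo_0^k(x)$ and closing the induction.
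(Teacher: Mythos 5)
Your proposal is correct, but it takes a genuinely different route from the paper's. The paper deduces Theorem~\ref{thm:main thm} from the Double Cube Theorem~\ref{thm:dct}: from the adjugate $\fA$ it builds the patching family of Example~\ref{ex:pat fam ass adj}, glues it into the double $S$-cube $\Pat \fP_{\fA^{\ast}}$ whose pull-back $\mathbf{2}^{\ast}\Pat\fP_{\fA^{\ast}}$ is the typical cube $\Typ(\fa;x_S)$ (admissible because $\fa$ is an $x_S$-sequence), and then runs the lattice-theoretic machinery of Section~\ref{sec:adm seq in mod lat} (universally admissible families, Proposition~\ref{prop:adm seq lem}, Corollary~\ref{cor:adm seq cor}) through the characterization of admissible fibered cubes in Proposition~\ref{prop:char of adm for fib}. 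You instead unwind the recursive definition of admissibility directly: monicity of $x$ comes from $d_T^{t\ast}d_T^t={(a_t)}_{x_T}$, and the whole regular adjugate is descended to each $\Homo_0^k(x)$ --- condition $\mathrm{(ii)}$ applied to $T\cup\{k\}$ and the pair $\{t,k\}$ is exactly the commutativity needed for $\fd^{\ast}$ to pass to the cokernels of the $d^k$'s, while your descent lemma (if $dd^{\ast}=a$, $d^{\ast}d=a$ and $(f,a)$ is regular on both ends of $N\rinf M\rdef M''$, then $f$ is regular on $M''$) combined with the snake-lemma iteration yields regularity of $\fa_{S\ssm\{k\}}$ on every vertex of $\Homo_0^k(x)$, for each ordering separately. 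This closes an induction on $\# S$ that never mentions double cubes or lattices. What you lose is the extra content of Theorem~\ref{thm:main thm} $\mathrm{(2)}$ (admissibility of all the cubes $x^{\fA,T}$, not only $x=x^{\fA,\emptyset}$) and the independently useful Theorem~\ref{thm:dct}; what you gain is a short, self-contained argument that directly generalizes the classical fact behind Corollary~\ref{cor:app 1}. The only points needing care are routine: the element chase in your lemma should be justified in an arbitrary $A$-linear abelian category (via an embedding theorem or generalized elements), and one should note that $d^k$, $d^{k\ast}$ descend to the successive quotients by $f_1,\dots,f_i$ because $\cC$ is $A$-linear.
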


\sn
$2$-dimensional cubes whose boundary morphisms are monomorphisms 
are admissible if and only if 
it is Cartesian. (See Example~\ref{ex:adm squ}.) 
However a similar statement is no longer valid for higher dimensional cubes. 
Cartesian cubes 
(we call them {\bf fibered cubes} in Definition~\ref{df:fibered S,P-cubes}) 
are not necessarily admissible. 
Theorem~\ref{introthm} provides a useful criterion 
for a Cartesian cube to be admissible. 
As application we can derive, as an immediate consequence of the main theorem, 
a property stated in Corollary~\ref{cor:app 1} of regular sequences, 
for which one of the author gave a proof in \cite[Lemma~4.2]{Kos} 
by a more straightforward but complicated method.

\sn
Theorem~\ref{thm:main thm} is a consequence of 
Theorem~\ref{thm:dct} which is a purely general theorem in category theory. 
For any natural number $n$, 
let $[n]^S$ be the partially ordered set of 
maps from $S$ to the totally ordered set of integers $k$ satisfying $0\leq k\leq n$. 
Let $\mathbf{2}:[1]^S \to [2]^S$ and $e_T:[1]^S \to [2]^S$ 
be order preserving maps defined by sending a map $f:S \to [1]$ 
to $2f:S \to [2]$ and $f+\chi_T:S \to [2]$ 
respectively 
where $\chi_T$ is the characteristic function of $T$ on $S$. 
(Compare Example~\ref{ex:power set}.) 
A {\bf double $S$-cube} $x$ in a category $\calD$ is 
a contravariant functor from $[2]^S$ to $\calD$. 
(See Example~\ref{ex:various type cubes}.)

\begin{thm}[\bf Double cube theorem]
\label{thm:dct}
Let $x$ be a double $S$-cube in an abelian category $\cA$. 
We assume that the following conditions hold.\\
$\mathrm{(1)}$ 
The $S$-cube $x\mathbf{2}$ is admissible.\\
$\mathrm{(2)}$ 
For any ordering pair 
$\frakj<\frakj'$ in $[2]^S$, 
$x(\frakj<\frakj')$ is a monomorphism in $\cA$.\\
$\mathrm{(3)}$ 
If $\# S\geq 3$, 
all faces of the $S$-cube 
$xe_T$ are admissible 
for any proper subset $T$ of $S$.\\
Then the $S$-cube $xe_S$ is also an admissible $S$-cube.
\end{thm}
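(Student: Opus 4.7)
The plan is to proceed by induction on $n := \#S$. For $n = 1$, condition~(3) is vacuous and the sole boundary of $xe_S$ is $x(\chi_{\{s\}} \leq 2\chi_{\{s\}})$, a morphism of $x$ which is a monomorphism by~(2); so $xe_S$ is admissible.

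For the inductive step, I would assume the theorem for cube dimensions less than $n \geq 2$. Since every boundary of $xe_S$ is a monomorphism by~(2), the inductive definition of admissibility reduces the proof to showing that for each $k \in S$ the $(S \setminus \{k\})$-cube $\Homo^k_0(xe_S)$ is admissible. To this end I would fix $k \in S$, set $S' := S \setminus \{k\}$, and for $\tau \in [2]^{S'}$ and $i \in \{0,1,2\}$ write $\tau \vee_k i \in [2]^S$ for the extension of $\tau$ with value $i$ at the coordinate $k$. The strategy is to exhibit a double $S'$-cube $y$ such that $y e_{S'} \cong \Homo^k_0(xe_S)$ and that $y$ satisfies the three hypotheses of the theorem with $S'$ in place of $S$; the inductive hypothesis then gives admissibility of $y e_{S'}$.

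The natural candidate is $y(\tau) := \coker(x_{\tau \vee_k 2} \to x_{\tau \vee_k 1})$ for $\tau \in [2]^{S'}$; using the identities $e_{S'}(\chi_T) \vee_k 1 = e_S(\chi_T)$ and $e_{S'}(\chi_T) \vee_k 2 = e_S(\chi_{T \cup \{k\}})$ for $T \in \cP(S')$, one checks $y e_{S'} \cong \Homo^k_0(xe_S)$. Conditions~(2) and~(3) for $y$ (the latter when $\#S' \geq 3$) should follow from the corresponding hypotheses on $x$ by routine diagram chases exploiting the functoriality of cokernels and the face admissibility supplied by~(3).

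The main obstacle will be verifying condition~(1) for $y$, namely admissibility of $y \mathbf{2}$: since $(y \mathbf{2})_T = \coker(x_{2 \chi_{T \cup \{k\}}} \to x_{2\chi_T + \chi_{\{k\}}})$, this is \emph{not} the admissible cube $\Homo^k_0(x \mathbf{2})$ given by~(1). To handle this I would embed $y$ in the short exact sequence
\[
0 \to y \to z \to y' \to 0
\]
of $[2]^{S'}$-indexed diagrams, where $z(\tau) := \coker(x_{\tau \vee_k 2} \to x_{\tau \vee_k 0})$ and $y'(\tau) := \coker(x_{\tau \vee_k 1} \to x_{\tau \vee_k 0})$; the middle term $z \mathbf{2} = \Homo^k_0(x\mathbf{2})$ is admissible by~(1), while admissibility of $y' \mathbf{2}$ should emerge from an analogous argument using~(3) applied to $xe_\emptyset$. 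The delicate final step is to combine the short exact sequence with the face-admissibility package from~(3) to promote admissibility of the ambient $z \mathbf{2}$ and the quotient $y' \mathbf{2}$ to admissibility of the subdiagram $y \mathbf{2}$.
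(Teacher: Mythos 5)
Your overall strategy---direct induction on $\#S$ via the inductive definition of admissibility, reducing to a double $S\ssm\{k\}$-cube $y$ of cokernels---is genuinely different from the paper's route, which instead identifies $xe_S$ with the fibered cube of the family of subobjects $\{x_{S\ssm T,T}\rinf x_{S,\emptyset}\}_{T\in\cP(S)}$ and translates everything into lattice theory (Proposition~\ref{prop:char of adm for fib} together with Corollary~\ref{cor:adm seq cor}, via the intermediate Theorem~\ref{thm:big adm implies small adm}). The difference of route is not the problem; the problem is that the step you yourself flag as the main obstacle is left unresolved, and the sketch you give for filling it does not work.

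Concretely, you need $y\mathbf{2}$ admissible and propose to extract it from the short exact sequence $0\to y\to z\to y'\to 0$, where $z\mathbf{2}=\Homo_0^k(x\mathbf{2})$ is indeed admissible by hypothesis $\mathrm{(1)}$. But $(y'\mathbf{2})_T=\coker(x_{2\chi_T+\chi_{\{k\}}}\to x_{2\chi_T})$, so $y'\mathbf{2}=\Homo_0^k(v)$ where $v$ is the pull-back of $x$ along $U\mapsto 2\chi_{U\ssm\{k\}}+\chi_{U\cap\{k\}}$: a ``mixed'' cube which is $\mathbf{2}$ in the $S\ssm\{k\}$-directions and $e$ in the $k$-direction. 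This $v$ is none of the cubes $x\mathbf{2}$ or $xe_T$ covered by the hypotheses; in particular hypothesis $\mathrm{(3)}$ applied to $xe_\emptyset$ says nothing about it, since the vertices $x_{2\chi_T}$ and $x_{2\chi_T+\chi_{\{k\}}}$ with $T\neq\emptyset$ do not occur in $xe_\emptyset$ at all. Proving admissibility of $v$ is a statement of essentially the same nature as the theorem you are trying to prove, so the argument threatens circularity or an infinite regress over ever more general pull-backs. A second, smaller gap: condition $\mathrm{(2)}$ for $y$, namely that $x_{\tau'\vee_k 1}/x_{\tau'\vee_k 2}\to x_{\tau\vee_k 1}/x_{\tau\vee_k 2}$ is a monomorphism for $\tau\leq\tau'$, amounts to the intersection identity $x_{\tau'\vee_k 1}\wedge x_{\tau\vee_k 2}=x_{\tau'\vee_k 2}$ inside $x_{\tau\vee_k 1}$; this is exactly the kind of distributivity statement the paper's Section~\ref{sec:adm seq in mod lat} and Example~\ref{ex:H_0Totds} are built to control, and it does not follow from hypothesis $\mathrm{(2)}$ for $x$ by a routine diagram chase. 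Either supply these two verifications in full, or switch to the paper's reduction through Theorem~\ref{thm:big adm implies small adm}.
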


\sn
The proof of Theorem~\ref{thm:dct} 
will be given at \ref{para:proof dct}. 
We explain the structure of this paper. 
In section~\ref{sec:adm seq in mod lat}, 
we introduce and study the notion about 
universally admissible families 
in a lattice which is 
a lattice theoretic variant of regular sequences 
in commutative ring theory. 
In section~\ref{sec:cubes}, 
we introduce and study the notions of (co)cubes and fibered cubes. 
In section~\ref{sec:adm cubes}, 
we review and establish the foundation of 
admissible cubes in an abelian category 
from \cite{Kos}. 
In section~\ref{sec:double cubes}, 
we develop an abstract version of 
the main theorem. 
In section~\ref{sec:reg adj cubes}, 
we state and prove the main theorem. 
The standard results in this paper 
will be frequently utilized in the authors' subsequent works 
about studying the weight of Adams operations on 
topological filtrations of $K$-theory of 
commutative regular local rings. 

\section*{Conventions.}
\subsection{General assumptions}
Throughout this paper, 
we use the letters $A$, $\calD$, 
$\cA$, $S$ and $P$ to denote a commutative ring with unit, 
a category, an abelian category, a set 
and a partially ordered set respectively.

\subsection{Partially ordered sets}
\begin{ppara}
For two elements $a$, $b$ in a partially ordered set $P$, 
we write $[a,b]$ for 
the set of all elements $u$ in $P$ satisfying $a\leq u\leq b$. 
We regard $[a,b]$ as a partially ordered subset of $P$ if $a\leq b$ 
and $[a,b]=\emptyset$ if otherwise. 
We often use this notation 
when $P=\bbZ$ is the partially ordered set of integers.
\end{ppara}
\begin{ppara}
For a non-negative integer $n$ and a positive integer $m$, 
we denote $[0,n]$ and $[1,m]$ by $[n]$ and $(m]$ respectively.
\end{ppara}
\begin{ppara}
The {\bf trivial ordering} $\leq$ on a set $S$ is defined by 
$x\leq y$ if and only if $x=y$.
\end{ppara}
\begin{ppara}
An element $x$ in a partially ordered set $P$ is said to 
be {\bf maximal} (resp. {\bf minimal}) if 
for any element $a$ in $P$, 
the inequality $x\leq a$ (resp. $a\leq x$) implies 
the equality $x=a$. 
An element $x$ in a partially ordered set $P$ is 
{\bf maximum} (resp. {\bf minimum}) if 
the inequality $a\leq x$ (resp. $x\leq a$) 
holds for any elements $a$ in $P$.
\end{ppara}

\begin{ppara}
For any set $S$, 
we write $\cP(S)$ for its {\bf power set}. 
Namely $\cP(S)$ is the set of all subsets of $S$. 
We consider $\cP(S)$ 
to be a partially ordered set under inclusion. 
\end{ppara}

\subsection{Category theory}
\begin{ppara}
We say a category $\cX$ is {\bf locally small} (resp. {\bf small}) if 
for any objects $x$ and $y$, $\Hom_{\cX}(x,y)$ forms a set 
(resp. if $\cX$ is locally small and $\Ob\cX$ forms a set).
\end{ppara}
\begin{ppara}
For two categories $\cX$ and $\cY$, 
we denote the (large) category of functors from $\cX$ to $\cY$ 
by $\cY^{\cX}$. 
Here the morphisms between functors from 
$\cX$ to $\cY$ are just natural transformations.
\end{ppara}
\begin{ppara}
We regard a partially ordered set $P$ as a category in a natural way. 
Namely, $P$ is a small category whose 
set of objects is $P$ and for any elements $x$ and $y$ in $P$, 
$\Hom_P(x,y)$ is the singleton $\{(x,y)\}$ if $x\leq y$ and is 
the empty set $\emptyset$ if otherwise. 
In particular, 
we regard any set $S$ as a category by the trivial ordering on $S$.
\end{ppara}

\subsection{Chain complexes}
For a chain complex, we use the homological notation. 
Namely a boundary morphisms are of degree $-1$.

\section{Universally admissible families in lattices}
\label{sec:adm seq in mod lat}

In this section, we study the notion about ({\it universally}) 
{\it admissible families} in lattices. 
Let us start by recalling some basic concepts about lattices. 

\begin{df}[\bf Lattice]
\label{df:lattice}
A {\bf lattice} $L$ is a partially ordered set such that for any elements 
$a$ and $b$ in $L$, 
their supremum $a\vee b$ and 
their infimum $a\wedge b$ exist. 
We call $a\vee b$ (resp. $a\wedge b$ the {\bf join} (resp. the {\bf meet}) of $a$ and $b$. 
\end{df}

\begin{nt}
\label{nt:x^wedge}
Let $S$ be a non-empty finite set and 
$\fx=\{x_s\}_{s\in S}$ a family of elements in a lattice $L$.\\
$\mathrm{(1)}$ 
For any subset $T$ of $S$, 
we denote the subfamily $\{x_t\}_{t\in T}$ by $\fx_T$.\\
$\mathrm{(2)}$ 
We write $\fx^{\vee S}$ or $\displaystyle{\underset{s\in S}{\bigvee} x_s}$ 
(resp. $\fx^{\wedge S}$ or $\displaystyle{\underset{s\in S}{\bigwedge} x_s}$) 
for 
$\sup\{x_s;s\in S \}$ (resp. $\inf\{x_s;s\in S \}$) and 
call $\fx^{\vee S}$ (resp. $\fx^{\wedge S}$) the {\bf join} (resp. {\bf meet}) of a family $\fx$. 
For any non-empty subset $T$ of $S$, 
we write $\fx^{\vee T}$ and $\fx^{\wedge T}$ for ${(\fx_T)}^{\vee T}$ 
and ${(\fx_T)}^{\wedge T}$ respectively. 
If the lattice $L$ has the maximum element $1$, 
we use the notation $\fx^{\wedge S}$ or $\displaystyle{\underset{s\in S}{\bigwedge} x_s}$ 
for $S=\emptyset$, 
which stands for the element $1$.\\
$\mathrm{(3)}$ 
We write $\fx\vee y$ (resp. $\fx\wedge y$) 
for a family $\{x_s\vee y\}_{s\in S}$ (resp. $\{x_s\wedge y\}_{s\in S}$). 
We have the following inequalities. 
\begin{equation}
\label{equ:pardist vee}
{(\fx\wedge y)}^{\vee S}\leq\fx^{\vee S}\wedge y.
\end{equation}
\begin{equation}
\label{equ:pardist wedge}
{(\fx\vee y)}^{\wedge S}\geq\fx^{\wedge S}\vee y.
\end{equation} 
\end{nt}

\begin{df}[\bf Ideals]
\label{df:ideals}
A subset $I$ of a partially ordered set $P$ is an {\bf ideal} if 
for any pair of elements $x\leq y$ of $P$, 
$x\in I$ implies $y\in I$. 
We write $\Ide(P)$ for the set of all ideals in $P$. 
\end{df}

\begin{rem}[\bf Semi-modular law]
\label{rem:semi-mod law}
Let $L$ be a lattice and $a$, $b$ and $c$ elements in $L$ such that $a\leq c$. 
Then we have the following inequality called the {\bf semi-modular law}. 
\begin{equation}
\label{equ:semi-mod law}
a\vee (b\wedge c) \leq (a\vee b)\wedge c.
\end{equation}
\end{rem}

\begin{lemdf}[\bf Modular lattice]
\label{lemdf:modular lattice}
A lattice $L$ is {\bf modular} 
if the following equivalent conditions hold.\\
$\mathrm{(1)}$ 
For any elements $a$, $b$ and $c$ in $L$ such that $a\leq c$, 
the following equality called the {\bf modular law} holds.
\begin{equation}
\label{equ:modular law}
a\vee(b\wedge c)=(a\vee b)\wedge c.
\end{equation}
The modular law is equivalent to the inequality 
\begin{equation}
\label{equ:modular law 2}
a\vee(b\wedge c)\geq(a\vee b)\wedge c
\end{equation}
by the inequality $\mathrm{(\ref{equ:semi-mod law})}$ 
in Remark~\ref{rem:semi-mod law}.\\
$\mathrm{(2)}$ 
For any elements $a$, $b$ and $c$ in $L$ such that $a\leq b$, 
the equalities $a\vee c=b\vee c$ and $a\wedge c=b\wedge c$ imply 
the equality $a=b$. 
\end{lemdf}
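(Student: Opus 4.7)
The plan is to establish the equivalence (1) $\Leftrightarrow$ (2); the stated equivalence of the modular law with the weaker inequality (\ref{equ:modular law 2}) is immediate from (\ref{equ:semi-mod law}), which supplies the opposite inequality in any lattice with no hypothesis on $a$, $b$, $c$.

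For the easy direction (1) $\Rightarrow$ (2), I would assume the modular law and take $a \leq b$ satisfying $a \vee c = b \vee c$ and $a \wedge c = b \wedge c$. Applying absorption, the hypothesis on joins, the modular law (legitimate because $a \leq b$), the hypothesis on meets, and absorption again produces the chain
\[
b = b \wedge (b \vee c) = b \wedge (a \vee c) = a \vee (b \wedge c) = a \vee (a \wedge c) = a,
\]
as desired.

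The substantive direction (2) $\Rightarrow$ (1) requires producing, from the inequality (\ref{equ:semi-mod law}), a witness for equality. Assume $a \leq c$ and set $x := a \vee (b \wedge c)$ and $y := (a \vee b) \wedge c$, so that $x \leq y$ by (\ref{equ:semi-mod law}). By (2) it suffices to exhibit an element $t$ for which $x \vee t = y \vee t$ and $x \wedge t = y \wedge t$. The natural choice is $t = b$: the joins both collapse to $a \vee b$, using $b \wedge c \leq b$ on the $x$-side and the inclusion $a \leq (a \vee b) \wedge c = y$ (which requires $a \leq c$) on the $y$-side; the meets both collapse to $b \wedge c$, using the absorption $(a \vee b) \wedge b = b$ on the $y$-side and the key observation $x \leq c$ (since both $a$ and $b \wedge c$ are bounded by $c$) on the $x$-side. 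Then (2) forces $x = y$.

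The only genuine difficulty is identifying $b$ as the correct test element. Once chosen, it symmetrizes the two computations so that both joins collapse to $a \vee b$ and both meets collapse to $b \wedge c$, and the hypothesis $a \leq c$ enters in exactly the two spots where this symmetry would otherwise break down.
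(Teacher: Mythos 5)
Your proposal is correct and follows essentially the same route as the paper: the direction $(1)\Rightarrow(2)$ is the same absorption chain read in the opposite order, and for $(2)\Rightarrow(1)$ you choose exactly the paper's elements $x=a\vee(b\wedge c)$, $y=(a\vee b)\wedge c$ and the same test element $b$. Your write-up actually supplies the verifications of $x\vee b=y\vee b$ and $x\wedge b=y\wedge b$ that the paper leaves implicit, which is a welcome addition but not a different argument.
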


\begin{proof}[\bf Proof]
First we assume that condition $\mathrm{(1)}$ holds. 
Then for any elements $a$, $b$ and $c$ in $L$ such that 
$a\leq b$, $a\vee c=b\vee c$ and $a\wedge c=b\wedge c$, 
we have the equalities 
$$a=a\vee(a\wedge c)=a\vee(b\wedge c)=(a\vee c)\wedge b=(b\vee c)\wedge b=b.$$
Next we assume that condition $\mathrm{(2)}$ holds. 
Then for any elements $a$, $b$ and $c$ in $L$ such that $a\leq c$, 
we put $x=a\vee(b\wedge c)$ and $y=(a\vee b)\wedge c$. 
Then we have $x\leq y$, $x\vee b=y\vee b$ and $x\wedge b=y\wedge b$. 
Hence we have the equalities 
$$a\vee(b\wedge c)=x=y=(a\vee b)\wedge c.$$
\end{proof}

\begin{ex}[\bf Well-powered abelian category]
\label{ex:well-powered abelian category}
An abelian category $\cA$ is {\bf well-powered} if 
for any object $x$ in $\cA$, 
the isomorphism class of subobjects of $x$ 
which is written by $\cP(x)$ 
forms a set. 
For example, 
it is well-known that if $\cA$ is the category of $A$-modules, 
then $\cA$ is well-powered. 
We claim that 
for any object $x$ in a well-powered abelian category $\cA$, 
the set $\cP(x)$ is a modular lattice 
with respect to the ordering given by the inclusion. 
For any abelian category $\cA$, for any object $x$ in $\cA$ 
and for any family of subobjects 
$\fx=\{x_s \rinf x\}_{s\in S}$ 
indexed by a set $S$, 
we write $\cP(\fx)$ 
for the sublattice of $\cP(x)$ generated by $\fx$. 
Then $\cP(\fx)$ is a modular lattice. 
\end{ex}

\begin{proof}[\bf Proof of the claim in 
Example~\ref{ex:well-powered abelian category}]
In general, for any subobjects $a\subset b$ and $c\subset d$ of $x$, 
we have the short exact sequence 
\begin{equation}
\label{equ:can short exact seq}
(b\wedge d)/(a\wedge c) \rinf b/a \oplus d/c \rdef (b\vee d)/ (a\vee c). 
\end{equation}
By putting $c=d$ in the short exact sequence 
$\mathrm{(\ref{equ:can short exact seq})}$ above, 
it turns out that the equalities 
$a\wedge c=b \wedge c$ and $a\vee c=b\vee c$ imply the equality $a=b$. 
Hence $\cP(x)$ is modular by 
Lemma-Definition~\ref{lemdf:modular lattice} $\mathrm{(2)}$.
\end{proof}

\begin{df}[\bf Distributive, regular and (universally) admissible sequences]
\label{df:dis, reg, adm seq}
Let $L$ be a lattice, $r$ a positive integer and 
$\fx=\{x_s\}_{s\in S}$ a non-empty family of elements in 
$L$ and $y$ an element in $L$.\\
$\mathrm{(1)}$ 
We say that a pair $(\fx,y)$ 
is {\bf distributive} if 
we have an equality 
$\fx^{\vee S}\wedge y={(\fx \wedge y)}^{\vee S}$. 
It is equivalent to the condition that 
$\fx^{\vee S}\wedge y\leq{(\fx \wedge y)}^{\vee S}$ 
by the inequality $\mathrm{(\ref{equ:pardist vee})}$ 
in Notation~\ref{nt:x^wedge}.\\
$\mathrm{(2)}$ 
We say that the family $\fx$ is {\bf strictly distributive} 
(resp. {\bf admissible}) 
if $\# S\leq 1$ or if $\#S\geq 2$ and 
if for any element $t$ in $S$ 
(resp. for any non-empty proper subset $T$ of $S$ and 
for any element $t$ in $S\ssm T$) a pair $(\fx_{S\ssm\{t\}},x_t)$ (resp. $(\fx_T,x_t)$) 
is distributive.\\
$\mathrm{(3)}$ 
We say that a sequence $z_1,\cdots,z_r$ of elements in $L$ is {\bf regular} if 
$r=1$ or if $r\geq 2$ and 
for any integer $i\in [2,r]$, 
a pair $(\{z_j\}_{1\leq j\leq i-1},z_i)$ is distributive.\\
$\mathrm{(4)}$ 
We say that the family $\fx$ is 
{\bf universally admissible} if 
for any two non-empty subsets $U$ and $V$ of $S$ with 
$U\cap V=\emptyset$, 
the pair $(\fx_U,\fx^{\wedge V})$ is distributive, 
or equivalently, 
if for any disjoint decomposition $S=U\coprod V$ 
such that $U\neq \emptyset$, 
a family $\fx_U\wedge \fx^{\wedge V}$ is admissible. 
\end{df}

\begin{rem}[\bf Universally admissible sequences]
\label{rem:univ adm seq}
Let $S$ be a finite set, $T$ a non-empty subset of $S$ and 
$\fx=\{x_s\}_{s\in S}$ a family of elements in a lattice $L$. 
Then\\
$\mathrm{(1)}$ 
If a family $\fx$ is admissible (resp. universally admissible), 
then a family $\fx_T$ is also admissible (resp. universally admissible).\\
$\mathrm{(2)}$ 
If $\# S\leq 2$, then a family $\fx$ is universally admissible.\\
$\mathrm{(3)}$ 
If $\# S\geq 3$, 
a family $\fx$ is universally admissible if and only if 
$\fx$ satisfies the following two conditions.\\
$\mathrm{(i)}$ 
$\fx$ is admissible.\\
$\mathrm{(ii)}$ 
$\fx_{S\ssm\{s\}}\wedge x_s$ is 
universally admissible for any $s\in S$.\\ 
$\mathrm{(4)}$ 
In particular if $\# S=3$, then 
a family $\fx$ is universally admissible if and only if 
$\fx$ is admissible. 
\end{rem}

\begin{proof}[\bf Proof]
Assertion $\mathrm{(1)}$ for the admissible case is trivial. 
Let us assume that a family $\fx$ is universally admissible. 
For any disjoint decomposition $T=U\coprod V$ such that $\# U\geq 3$, 
a family $\fx_{(S\ssm T)\coprod U}\wedge \fx^{\wedge V}$ is 
admissible by the assumption. 
Therefore a family $\fx_U\wedge \fx^{\wedge V}$ is 
also admissible by the assertion for the admissible case. 
Hence a family $\fx_T$ is universally admissible. 
Next we prove assertion $\mathrm{(3)}$. 
Let us assume that a family $\fx$ satisfies 
conditions $\mathrm{(i)}$ and $\mathrm{(ii)}$ 
and let us fix a pair of disjoint subsets $U$ and $V$ of $S$ 
such that $S=U\coprod V$ and 
$\# U\geq 3$. 
If $V=\emptyset$, then $\fx_U\wedge \fx^{\wedge V}=\fx$ 
is admissible by condition $\mathrm{(i)}$. 
If there exists an element $s$ in $V$, 
then 
$\fx_U\wedge \fx^{\wedge V}=
{(\fx_{S\ssm\{s\}}\wedge x_s)}_{U}\wedge 
{(\fx_{S\ssm\{s\}}\wedge x_s)}^{\wedge V\ssm\{s\}}$ 
is admissible by condition $\mathrm{(ii)}$. 
Hence $\fx$ is universally admissible. 
Assertion of the other direction is trivial. 
Assertions $\mathrm{(2)}$ and $\mathrm{(4)}$ are easy. 
\end{proof}

\begin{ex}[\bf Regular sequences]
\label{ex:reg seq}
Let $A$ be a commutative ring with unit, $M$ an $A$-module, 
$r$ an integer such that $r\geq 3$ and 
$f_1,\cdots,f_r$ a sequence of non-unit elements in $A$. 
Let us recall that we say a sequence $f_1,\cdots,f_r$ is {\bf $M$-regular} if 
the multiplication by $f_1$, $M\to M$ is injective and 
for any $i\in (r-1]$, 
the multiplication by $f_{i+1}$, 
$M/(f_1,\cdots,f_i)M \to M/(f_1,\cdots,f_i)M$ is injective. 
Now assume that a sequence $f_i,f_j$ is a $M$-regular sequence 
for any $1\leq i,\ j\leq r$ with $i\neq j$. 
Then a sequence $f_1M,\cdots,f_rM$ in $\cP(M)$ is a regular sequence 
if and only if the sequence $f_1,\cdots,f_r$ is $M$-regular. 
(See also Lemma~\ref{lem:char of x-seq}.)
\end{ex}

\begin{ex}[\bf Distributive lattices]
\label{ex:distributive lattice}
We say that a lattice $L$ is {\bf distributive} 
if for any finite subset $\fx=\{x_s\}_{s\in S}$ of $L$ 
indexed by a non-empty finite subset $S$ with $\#S \geq 2$, 
a pair $(\fx_{S\ssm\{s\}},x_s)$ is distributive for any $s\in S$. 
For any non-empty finite subset $\fx=\{x_s\}_{s\in S}$ of $L$, 
we consider the following three assertions.\\
$\mathrm{(1)}$ 
The sublattice of $L$ generated by $\fx$ is distributive.\\
$\mathrm{(2)}$ 
The map $\Ide(\cP(S)) \to L$ 
defined by sending an ideal $I$ to 
an element 
$\displaystyle{\underset{V\in I}{\vee}
\fx^{\wedge V}}$ in $L$ 
preserves the meet operation.\\
$\mathrm{(3)}$ 
The set $\{\fx^{\wedge V};V\subset S\}$ is admissible.\\
Then assertions $\mathrm{(1)}$ and $\mathrm{(2)}$ are equivalent and 
assertion $\mathrm{(2)}$ implies assertion $\mathrm{(3)}$. 
Moreover if $\fx$ satisfies assertion $\mathrm{(3)}$, 
then $\fx$ is universally admissible. 
\end{ex}

\begin{prop}
\label{prop:adm seq lem}
Let $L$ be a lattice, 
$S$ a non-empty finite set, 
$\fx=\{x_i\}_{i\in S}$ a family of elements in $L$ 
and $y$ an element in $L$. 
Assume that the following two conditions hold.\\
$\mathrm{(1)}$ 
The family $\fx$ is admissible {\rm (}resp. universally admissible{\rm )}.\\
$\mathrm{(2)}$ 
If $\# S\geq 2$, 
then for any element $s \in S$ and 
any non-empty subset $U$ of $S\ssm\{s\}$, 
a pair $(\fx_{U}\wedge x_s,y)$ 
is distributive. {\rm (}Resp. 
For any pair of non-empty disjoint subsets $U$ and $V$ of 
$S$ 
such that $\# V\geq 2$ 
and any element $v\in V$, 
a pair $(\fx_U\wedge \fx^{\wedge V},\fx^{\wedge V\ssm\{v\}}\wedge y)$ 
is distributive{\rm )}.\\
Then a family $\fx\wedge y$ 
is also admissible {\rm (}resp. universally admissible{\rm )}. 
\end{prop}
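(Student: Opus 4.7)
The plan is to directly verify the distributivity conditions defining admissibility or universal admissibility of the family $\fx\wedge y$. In both cases, the inequality $\geq$ in each distributivity identity is automatic from $(\ref{equ:pardist vee})$, so only the $\leq$ direction must be shown, via a chain of manipulations using $(\ref{equ:pardist vee})$ together with hypotheses $(1)$ and $(2)$.

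For the admissible case, I fix a non-empty proper subset $T\subset S$ and an element $t\in S\ssm T$, and aim to prove
$(\fx\wedge y)^{\vee T}\wedge(x_t\wedge y)\leq((\fx\wedge y)_T\wedge(x_t\wedge y))^{\vee T}$.
First $(\ref{equ:pardist vee})$ yields $(\fx\wedge y)^{\vee T}\leq\fx^{\vee T}\wedge y$, which bounds the left-hand side by $\fx^{\vee T}\wedge x_t\wedge y$. Next, condition $(1)$ rewrites $\fx^{\vee T}\wedge x_t$ as $(\fx_T\wedge x_t)^{\vee T}$, and finally condition $(2)$ applied to the pair $(\fx_T\wedge x_t,y)$ slides $y$ inside the join: $(\fx_T\wedge x_t)^{\vee T}\wedge y=(\fx_T\wedge x_t\wedge y)^{\vee T}$, which is the desired right-hand side.

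For the universally admissible case, I fix non-empty disjoint subsets $U',V'\subset S$; since $V'$ is non-empty we have $(\fx\wedge y)^{\wedge V'}=\fx^{\wedge V'}\wedge y$, so the goal is $(\fx\wedge y)^{\vee U'}\wedge\fx^{\wedge V'}\wedge y\leq(\fx_{U'}\wedge\fx^{\wedge V'}\wedge y)^{\vee U'}$. When $\#V'=1$, this reduces to the admissible case just treated, under the convention $\fx^{\wedge\emptyset}=1$ by which the UA version of $(2)$ at $\#V=1$ specializes to the admissible version. When $\#V'\geq 2$, I proceed in the same spirit: first bound the left-hand side by $\fx^{\vee U'}\wedge\fx^{\wedge V'}\wedge y$ via $(\ref{equ:pardist vee})$; next rewrite $\fx^{\vee U'}\wedge\fx^{\wedge V'}$ as $(\fx_{U'}\wedge\fx^{\wedge V'})^{\vee U'}$ using the universal admissibility of $\fx$ from $(1)$; then pick any $v\in V'$ and observe that $(\fx_{U'}\wedge\fx^{\wedge V'})^{\vee U'}\leq\fx^{\wedge V'}\leq\fx^{\wedge V'\ssm\{v\}}$, so an extra factor of $\fx^{\wedge V'\ssm\{v\}}$ can be inserted without altering the expression; finally condition $(2)$ (UA version) applied to the pair $(\fx_{U'}\wedge\fx^{\wedge V'},\fx^{\wedge V'\ssm\{v\}}\wedge y)$ yields $(\fx_{U'}\wedge\fx^{\wedge V'}\wedge y)^{\vee U'}$, as required.

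The main difficulty lies in the bookkeeping for the universally admissible case, specifically in the absorption trick of inserting the factor $\fx^{\wedge V'\ssm\{v\}}$ so that hypothesis $(2)$ for UA, which is formulated with $\fx^{\wedge V\ssm\{v\}}\wedge y$ in the second slot rather than $y$ alone, becomes directly applicable. Once this manipulation is set up correctly, each step in the chain is either an instance of $(\ref{equ:pardist vee})$, an application of one of the two hypotheses, or a straightforward absorption identity.
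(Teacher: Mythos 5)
Your proof is correct and essentially the paper's: the admissible case is the paper's displayed chain of (in)equalities read in the opposite direction, and for the universally admissible case the paper applies the admissible case of the proposition to the family $\fx_U\wedge\fx^{\wedge V}$ and the element $\fx^{\wedge V}\wedge y$ for each decomposition $S=U\coprod V$, which builds your absorption factor $\fx^{\wedge V\ssm\{v\}}$ into the chosen element instead of inserting it mid-computation, but rests on the same manipulations. Your appeal to hypothesis $\mathrm{(2)}$ at $\# V=1$ (i.e.\ to the admissible-case version of $\mathrm{(2)}$) is likewise needed, implicitly, in the paper's own proof for the decomposition with $V=\emptyset$, so flagging it as a convention is reasonable and not a gap peculiar to your argument.
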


\begin{proof}[\bf Proof] 
We first prove the assertion for the admissible case. 
What we need to prove is that 
the family $\fx_U\wedge y$ is strictly distributive 
for any non-empty subset $U$ of $S$. 
We shall assume that $k:=\# U\geq 3$ and 
we put $U=\{i_1,\cdots,i_k\}$. 
Then without loss of generality, 
we just need to check the following (in)equalities. 
$$\underset{j=1}{\overset{k-1}{\bigvee}}(x_{i_j}\wedge x_{i_k}\wedge y) 
\underset{\textbf{I}}{=} 
\left \{\underset{j=1}{\overset{k-1}{\bigvee}} (x_{i_j}\wedge x_{i_k} )\right \} \wedge y
\underset{\textbf{II}}{=} 
\left ( \underset{j=1}{\overset{k-1}{\bigvee}} x_{i_j} \right )\wedge (x_{i_k}\wedge y) \geq 
\left \{\underset{j=1}{\overset{k-1}{\bigvee}} (x_{i_j}\wedge y) \right \} \wedge (x_{i_k}\wedge y)$$
where the equality $\textbf{I}$ follows 
from assumption $\mathrm{(2)}$ 
and the equality $\textbf{II}$ follows 
from assumption $\mathrm{(1)}$. 
Next we prove the assertion for the universally admissible case. 
What we need to prove is that 
for any disjoint decomposition $U\coprod V=S$ such that 
$U\neq \emptyset$, 
$\fx_U\wedge \fx^{\wedge V}\wedge y$ is admissible. 
To prove the assertion above, 
we apply this Proposition~\ref{prop:adm seq lem} 
for the admissible case 
to the family $\fx_U\wedge \fx^{\wedge V}$ and 
the element $\fx^{\wedge V}\wedge y$. 
What we need to check is the following two conditions.\\
$\mathrm{(i)}$ 
The family $\fx_U\wedge \fx^{\wedge V}$ 
is admissible.\\
$\mathrm{(ii)}$ 
For any $u \in U$ and any non-empty subset $W$ of $U\ssm\{u\}$, 
a pair 
$(\fx_W\wedge\fx^{\wedge V\coprod \{u\}},\fx^{\wedge V}\wedge y)$ 
is distributive.\\
Condition $\mathrm{(i)}$ is a consequence of 
assumption $\mathrm{(1)}$ 
and condition $\mathrm{(ii)}$ 
is just assumption $\mathrm{(2)}$. 
Hence we get the desired result.
\end{proof}

\begin{cor}
\label{cor:adm seq cor}
Let $L$ be a lattice, 
$S$ a non-empty finite set 
and $\fa=\{a_s\}_{s\in S}$, $\fb=\{b_s\}_{s \in S}$
families of elements 
indexed by $S$ 
in $L$ 
such that $a_s\geq b_s$ for any $s\in S$. 
Assume that the following two conditions hold.\\
$\mathrm{(1)}$ 
The family $\fb$ is admissible 
{\rm (}resp. universally admissible{\rm )}.\\
$\mathrm{(2)}$ 
If $\# S \geq 2$, 
then for any proper subset $W$ of $S$, 
any non-empty subset $U$ of $S$ 
such that $\# U\geq 2$, 
any elements $u\in U$ and $s\in S\ssm W$, 
a pair $(\fb_{U\ssm\{u\}}\wedge b_u\wedge \fa^{\wedge W},a_s)$ 
is distributive. 
{\rm (}Resp. 
If $\# S\geq 2$, 
then for any proper subset $W$ of $S$, 
any pair of disjoint non-empty subsets $U$ and $V$ of $S$ 
such that $\# V\geq 2$ 
and any elements $s\in S\ssm W$ and $v\in V$, 
a pair 
$(\fb_U\wedge \fb^{\wedge V}\wedge \fa^{\wedge W},
\fb^{\wedge V\ssm\{v\}}\wedge\fa^{\wedge W\coprod\{s\}})$ 
is distributive{\rm )}.\\
Then a family $\fb\wedge \fa^{\wedge S}$ 
is also admissible {\rm (}resp. universally admissible{\rm )}. 
\end{cor}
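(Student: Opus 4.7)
The plan is a straightforward induction that feeds Proposition~\ref{prop:adm seq lem} back into itself, building $\fa^{\wedge S}$ as an iterated meet of the $a_s$'s and absorbing them into $\fb$ one step at a time. Choose an enumeration $S = \{s_1, \dots, s_n\}$ and, for $0 \leq k \leq n$, let $W_k := \{s_1, \dots, s_k\}$ and
\[
\fc_k := \fb \wedge \fa^{\wedge W_k} = \{b_s \wedge \fa^{\wedge W_k}\}_{s \in S}.
\]
Then $\fc_0 = \fb$ is admissible (resp.\ universally admissible) by hypothesis $\mathrm{(1)}$, while $\fc_n = \fb \wedge \fa^{\wedge S}$ is the family in question. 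The plan is to prove by induction on $k$ that each $\fc_k$ is admissible (resp.\ universally admissible).

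For the step from $\fc_k$ to $\fc_{k+1} = \fc_k \wedge a_{s_{k+1}}$ (with $0 \leq k \leq n-1$), I would apply Proposition~\ref{prop:adm seq lem} to $\fx = \fc_k$ and $y = a_{s_{k+1}}$. Its condition $\mathrm{(1)}$ is the inductive hypothesis. For its condition $\mathrm{(2)}$, repeated copies of $\fa^{\wedge W_k}$ absorb into a single one, giving
\[
{(\fc_k)}_U \wedge {(\fc_k)}_s = \fb_U \wedge b_s \wedge \fa^{\wedge W_k}
\]
in the admissible case, and
\[
{(\fc_k)}_U \wedge \fc_k^{\wedge V} = \fb_U \wedge \fb^{\wedge V} \wedge \fa^{\wedge W_k}, \qquad \fc_k^{\wedge V \ssm \{v\}} \wedge a_{s_{k+1}} = \fb^{\wedge V \ssm \{v\}} \wedge \fa^{\wedge W_k \coprod \{s_{k+1}\}}
\]
in the universally admissible case. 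Since $W_k \subsetneq S$ and $s_{k+1} \in S \ssm W_k$, the corollary's hypothesis $\mathrm{(2)}$ delivers precisely the required distributivities, with $W = W_k$ and $s = s_{k+1}$ (and, in the admissible case, with $U' = U \cup \{s\}$ and $u = s$ playing the role of the corollary's $U$ and $u$).

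Each step is essentially bookkeeping: the real content is Proposition~\ref{prop:adm seq lem}, and the work here is merely to verify that the quantifiers $U, V, u, v$ appearing in the proposition's hypothesis when applied to $\fc_k$ correspond bijectively to those in the corollary's hypothesis, with $W_k$ playing the role of the ``already-absorbed'' part $W$. I do not expect any genuine obstacle beyond this matching of indices; in particular, the trivial cases $\# S \leq 1$ are handled by the vacuous clauses in the definition of admissibility and in the proposition's hypothesis $\mathrm{(2)}$.
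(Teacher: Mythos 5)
Your proposal is correct and coincides with the paper's own argument: the paper likewise absorbs the elements $a_s$ one at a time into $\fb$ by induction, invoking Proposition~\ref{prop:adm seq lem} at each step and matching its hypothesis $\mathrm{(2)}$ against the corollary's hypothesis $\mathrm{(2)}$ exactly as you describe (the paper merely enumerates $S$ in the reverse order). The index bookkeeping you outline, including the substitution $U'=U\cup\{s\}$, $u=s$ in the admissible case, is precisely what the paper's proof carries out.
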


\begin{proof}[\bf Proof]
We set $r:=\# S$. 
We may assume without loss of generality that $S=(r]$ and $r\geq 2$. 
For any integers $k \in [-1,r-1]$ and $s\in S$, 
we set $c_s^{(k)}=b_s\wedge \fa^{\wedge [r-k,r]}$ 
and $\fc^{(k)}:=\{c_s^{(k)}\}_{s\in S}$. 
Notice that $\fc^{(-1)}=\fb$ and $\fc^{(r-1)}=\fb\wedge \fa^{\wedge S}$. 
\begin{claim}
For any integer $k\in [-1,r-1]$, 
the family $\fc^{(k)}$ is admissible 
{\rm (}resp. universally admissible{\rm )}.
\end{claim}
\sn
We prove the claim by induction on $k$. 
For $k=-1$, the assertion is 
nothing but assumption $\mathrm{(1)}$. 
Let us assume that 
the assertion is true for some integer $k \in [-1,r-2]$. 
Notice that we have the equality 
$c_s^{(k+1)}=c_s^{(k)}\wedge a_{r-k-1}$ for any $s\in (r]$. 
We apply Proposition~\ref{prop:adm seq lem} 
to the family $\fc^{(k)}$ and the element $a_{r-k-1}$. 
What we need to check is the following two conditions.\\
$\mathrm{(a)}$ 
The family $\fc^{(k)}$ 
is admissible (resp. universally admissible).\\
$\mathrm{(b)}$ 
For any element $s\in S$ and 
any non-empty subset $U$ of $S\ssm\{s\}$, 
a pair 
$(\fc_{U}^{(k)}\wedge c_s^{(k)},a_{r-k-1})$ 
is distributive. 
(Resp. For any pair of non-empty subsets 
$U$ and $V$ of $S$ and 
any element $v\in V$, 
a pair 
$(\fc_U^{(k)}\wedge {\fc^{(k)}}^{\wedge V},a_{r-k-1}
\wedge {\fc^{(k)}}^{\wedge V\ssm\{v\}})$ 
is distributive).\\
Condition $\mathrm{(a)}$ is just an inductive hypothesis 
and condition $\mathrm{(b)}$ 
follows from assumption $\mathrm{(2)}$. 
Hence the family $\fc^{(k+1)}=\fc^{(k)}\wedge a_{r-k-1}$ 
is admissible (resp. universally admissible), 
which completes the proof of the claim. 
Since $\fc^{(r-1)}=\fb\wedge \fa^{\wedge (r]}$, 
we obtain the desired result.  
\end{proof}

\begin{rem}
\label{rem:adm seq cor rem}
$\mathrm{(1)}$ 
In the situation 
Corollary~\ref{cor:adm seq cor} condition $\mathrm{(2)}$, 
we shall assume $s\neq u$ for the admissible case 
and $s\neq v$ for the universally admissible case.\\
$\mathrm{(2)}$ 
Moreover if we assume that 
$L$ is modular, 
then we shall assume that 
$W\cup U\neq S$ and that 
$s$ is not in $U$ for the universally admissible case. 
\end{rem}

\begin{proof}[\bf Proof] 
$\mathrm{(1)}$ 
For the admissible case (resp. the universally admissible case), 
if we assume $u=s$ (resp. $v=s$), 
then we have the (in)equality
$$b_k\wedge b_u\wedge \fa^{\wedge W}\leq a_s \ \ 
\text{(resp. $b_k\wedge \fb^{\wedge V}\wedge \fa^{\wedge W}\leq 
\fb^{\wedge V\ssm\{v\}}\wedge \fa^{\wedge W\coprod \{s\}} $)}.$$
Therefore we have the equality 
$$\left \{\underset{k\in U\ssm\{u\}}{\bigvee} 
(b_k\wedge b_u\wedge \fa^{\wedge W})  \right \}\wedge a_s= 
\underset{k\in U\ssm\{u\}}{\bigvee} 
(b_k\wedge b_u\wedge \fa^{\wedge W})$$
$$\text{(resp. 
$\left \{\underset{k\in U}{\bigvee}
(b_u\wedge \fb^{\wedge V}\wedge \fa^{\wedge W}) \right \}\wedge 
(\fa^{\wedge W\coprod\{s\}}\wedge \fb^{\wedge V\ssm\{v\}})= 
\underset{k\in U}{\bigvee}
(b_u\wedge \fb^{\wedge V}\wedge \fa^{\wedge W})
$)}.$$
Therefore a pair $(\fb_{U\ssm\{u\}}\wedge b_u\wedge \fa^{\wedge W},a_s)$ 
(resp. $(\fb_U\wedge \fb^{\wedge V}\wedge \fa^{\wedge W},
\fb^{\wedge V\ssm\{v\}}\wedge\fa^{\wedge W\coprod\{s\}})$) is distributive.

\sn
$\mathrm{(2)}$ 
Let us assume that $s$ is in $U$. 
Then since we have the inequality 
$$b_s\wedge \fb^{\wedge V}\wedge\fa^{\wedge W}\leq 
\fb^{\wedge V\ssm\{v\}}\wedge \fa^{\wedge W\coprod\{s\}},$$
we have the equalities
\begin{multline*}
\left \{\underset{k\in U}{\bigvee}
(b_u\wedge \fb^{\wedge V}\wedge \fa^{\wedge W}) \right \}\wedge 
(\fa^{\wedge W\coprod\{s\}}\wedge \fb^{\wedge V\ssm\{v\}})\\
=\left [ (b_k\wedge b_s\wedge \fa^{\wedge W})\vee 
\left \{\underset{k\in U\ssm\{s\}}{\bigvee}
(b_u\wedge \fb^{\wedge v}\wedge \fa^{\wedge W}) \right \}\right ]\wedge 
(\fa^{\wedge W\coprod\{s\}}\wedge \fb^{\wedge V\ssm\{v\}})\\
=(b_k\wedge b_s\wedge \fa^{\wedge W})\vee \left [ 
\left \{\underset{k\in U\ssm\{s\}}{\bigvee}
(b_u\wedge \fb^{\wedge v}\wedge \fa^{\wedge W}) \right \}\wedge 
(\fa^{\wedge W\coprod\{s\}}\wedge \fb^{\wedge V\ssm\{v\}}) \right ]
\end{multline*}
by the modularity of $L$. 
Hence we shall assume that $s$ is in $U$ by 
replacing $U\ssm\{s\}$ with $U$.
\end{proof}

\section{Cubes}
\label{sec:cubes} 

In this section, we introduce the notions of {\it cubes}. 
Let $S$ be a set, $P$ a partially ordered set and $\calD$ a category. 

\begin{df}[\bf Successor, Precessor]
\label{df:sucpre}
Let $x$ be an element in a partially ordered set $P$. 
A {\bf successor} (resp. {\bf predecessor}) 
of $x$ in $P$ is an element $t$ in $P$ such that 
$x<t$ (resp. $x>t$) and 
there exists no element $u$ in $P$ such that $x<u<t$ (resp. $x>u>t$). 
If $P$ is a totally ordered set, 
then the successor (resp. predecessor) of $x$ is uniquely determined 
if it exists and we denote it by $\Suc(x)$ (resp. $\Pre(x)$).
\end{df}

\begin{nt}
\label{nt:P^S}
The set of maps from $S$ to $P$ is denoted by $P^S$. 
We define the ordering $\leq$ on $P^S$ by $f\leq g$ 
if and only if $f(s)\leq g(s)$ for any element $s$ in $S$. 
Then $P^S$ is a partially ordered set. 
If $P$ is a lattice, then $P^S$ is also a lattice. 
Here for two elements $f$, $g$ in $P^S$, 
the maps $f\vee g$, $f\wedge g:S\to P$ send $s$ to 
$f(s)\vee g(s) $ and $f(s)\wedge g(s)$ respectively, 
for each element $s$ in $S$. 
(Comapare with {\bf Conventions} $\mathrm{(4)}$ $\mathrm{(iii)}$.) 
Notice that we have the equality as partially ordered sets 
\begin{equation}
\label{equ:dual P^S}
{(P^S)}^{\op}={(P^{\op})}^S.
\end{equation} 
\end{nt}

\begin{ex}[\bf (Double) Power sets]
\label{ex:power set}
$\mathrm{(1)}$
For any subset $T$ of $S$, 
we denote the {\bf characteristic function} 
({\bf of $T$ on $S$}) by $\chi_T:S \to [1]$. 
Namely $\chi_T(s)=1$ if $s$ is in $T$ and otherwise $\chi_T(s)=0$. 
We write $\cP(S)$ for the {\bf power set of $S$}. 
Namely $\cP(S)$ is the set of all subsets of $S$. 
We regard $\cP(S)$ as a partially ordered set 
ordered by set inclusion, 
a fortiori, a category. 
We also write $\cP'(S)$ for the set $\cP(S)\ssm\{\emptyset\}$. 
We have the canonical isomorphism of partially ordered sets 
\begin{equation}
\label{equ:powerset}
\cP(S)\isoto [1]^S
\end{equation}
which is defined by sending a subset $T$ of $S$ to the characteristic function $\chi_T$ of $T$ on $S$. 
If we regard $[1]$ as the Sierpinski space, 
namely the topolgical space whose class of open sets is 
$\{\emptyset,\ \{1\},\ \{0,\ 1\}\}$, 
and $S$ as a discrete topological space, 
then $\cP(S)\isoto [1]^S$ inherits the compact-open topolgy from $[1]$ and $S$. 
The class of open sets of $\cP(S)$ is the set 
of all ideals $\Ide(\cP(S))$.\\
$\mathrm{(2)}$ 
For any ordered pair of disjoint subsets $(U,V)$ of $S$, 
we define the {\bf characteristic function} ({\bf of  $(U,V)$ on $S$}) 
$\chi_{U,V}:S \to [2]$ as follows. 
For any element $s$ in $S$, 
$\chi_{U,V}(s)$ is $0$ if $s$ is in $S\ssm (U\coprod V)$, 
is $1$ if $s$ is in $U$ and is $2$ if $s$ is in $V$. 
We denote the set of all ordered pairs of disjoint subsets of $S$ by $\DP(S)$. 
Namely
$$\DP(S):=\{(U,V)\in\cP(S)\times\cP(S);U\cap V=\emptyset \}.$$
We define the ordering $\leq$ on $\DP(S)$ by declaring to be 
$(U,V)\leq (U',V')$ if and only if $V\subset V'$ and $U\subset U'\coprod V'$. 
Then $\DP(S)$ is a partially ordered set. 
We have the canonical isomorphsim of partially ordered sets 
\begin{equation}
\label{equ:doublepowerset}
\DP(S)\isoto [2]^S
\end{equation}
which is defined by sending an ordered pair of subsets $(U,V)$ of $S$ to 
the characteristic function $\chi_{U,V}$ of $(U,V)$ on $S$.\\
$\mathrm{(3)}$ 
For any pair of maps $(f,g)$ from $S$ to $[1]$, 
we define the map 
$f+g:S \to [2]$ by 
sending an element $s$ in $S$ to 
$f(s)+g(s)$. 
Then we have the map 
$${\textbf{+}}:[1]^S\times [1]^S \to [2]^S.$$
By the virtue of isomorphisms (\ref{equ:powerset}) and 
(\ref{equ:doublepowerset}), 
we also have the map
$${\textbf{+}}:\cP(S)\times \cP(S) \to \DP(S),\ \ 
(U,V)\mapsto U{\textbf{+}}V:=(U\ominus V,U\cap V).$$
For any element $T$ in $\cP(S)$, 
we write $e_T$ for the map from $\cP(S)$ to $\DP(S)$ 
which is sending a subset $U$ of $S$ to the element 
$U{\textbf{+}}T$ in $\DP(S)$. 
For any pair of disjoint subsets $(U,V)$ in $\DP(S)$ and 
for any disjoint decomposition $U=A\coprod B$, 
we have the equality
\begin{equation}
\label{equ:eUV 2}
(U,V)=e_{A\coprod V}(B\coprod V).
\end{equation}
\end{ex}

\begin{nt}
\label{nt:setin} 
Let $U$ be an element in $P^S$ and $s$ an element in $S$. 
We write $s\in \Supp U$ for the condition that 
$U(s)$ is not a minimal element in $P$. 
Now let us assume that $P$ is a totally ordered set, 
$s\in U$ and 
there exists the element $\Pre U(s)$. 
Then we define the map $U\ssm\{s\}:S \to P$ by putting 
that $U\ssm\{s\}(t)$ is $\Pre(U(s))$ if $s=t$ and, 
is $U(t)$ if $s\neq  t$. 
Then obviously we have the inequality $U\ssm\{s\} <U$. 
These notations are compatible with the usual ones when $P=[1]$. 
\end{nt}

\begin{df}[\bf Cubes]
\label{df:cubes}
An {\bf $(S,P)$-cube} (resp. {\bf $(S,P)$-cocube}) in a category $\calD$ 
is a contravariant (resp. covariant) functor 
from $P^S$ to $\calD$. 
We denote the category of $(S,P)$-cubes 
(resp. $(S,P)$-cocubes) in $\calD$ 
by $\Cub^{(S,P)}\calD$ (resp. $\CoCub^{(S,P)}\calD$). 
Here the morphisms between $(S,P)$-(co)cubes are just natural transformations. 
The associations $\calD \mapsto \Cub^{(S,P)}\calD$ and 
$\calD \mapsto \CoCub^{(S,P)}\calD$ 
give endofunctors on the category of (small) categories. 
For any $(S,P)$-(co)cube $x$ in $\calD$, any element $T$ in $P^S$, 
we write $x_T$ for $x(T)$ and call it the {\bf vertex of $x$} ({\bf at $T$}). 
We say that an $(S,P)$-cocube (resp. $(S,P)$-cube) $x$ in a category $\calD$ is {\bf monic} 
if for any pair of elements $U$ and $V$ in $P^S$ such that $U\leq V$ (resp. $V\leq U$), 
$x(U\leq V)$ is a monomorphism in $\calD$. 
Now assume that $P$ is a totally ordered set and 
let $s$ be an element in $S$ such that $s\in U$ and 
there exists the element $\Pre U(s)$ in $P$. 
We write $d_U^{s,x}$ or shortly $d_U^s$ for $x(U\ssm\{s\} <U)$ and call it 
the {\bf ($s$-)boundary morphism of $x$} ({\bf at $T$}).
\end{df}

\begin{ex}
\label{ex:various type cubes}
$\mathrm{(1)}$ 
If $P$ is a singleton $P=\{\ast\}$, 
then an $(S,P)$-(co)cube $x$ in a category $\calD$ is just a family 
$\{x_s\}_{s\in S}$ of objects in $\calD$ indexed by $S$.\\
$\mathrm{(2)}$ 
If $P=[1]$, 
then an $(S,P)$-cube (resp. $(S,P)$-cocube) $x$ 
in a category $\calD$ is regarded as 
a contravariant (resp. covariant) functor from $\cP(S)$ to $\calD$ 
by the isomorphism $\mathrm{(\ref{equ:powerset})}$. 
We simply call $(S,[1])$-(co)cubes {\bf $S$-(co)cubes} and 
we write $\Cub^S\calD$ (resp. $\CoCub^S\calD$) for 
$\Cub^{(S,[1])}\calD$ (resp. $\CoCub^{(S,[1])}\calD$).\\
$\mathrm{(3)}$ 
If $P=[2]$, 
then an $(S,P)$-cube (resp. $(S,P)$-cocube) 
$x$ in a category $\calD$ is regarded as 
a contravariant (resp. covariant) functor from $\DP(S)$ to $\calD$ 
by the isomorphism $\mathrm{(\ref{equ:doublepowerset})}$. 
We simply call $(S,[2])$-(co)cubes {\bf double $S$-(co)cubes} and 
we write $\DCub^S\calD$ (resp. $\CoDCub^S\calD$) for 
$\Cub^{(S,[2])}\calD$ (resp. $\CoCub^{(S,[2])}\calD$). 
For any double $S$-(co)cube $x$, 
any pair of disjoint subsets $(U,V)$ of 
$S$ and any element $s\in U\coprod V$, 
we write $x_{U,V}$ and $d_{U,V}^{s,x}$ (or shorty $d_{U,V}^s$) 
for $x_{(U,V)}$ and $d_{(U,V)}^{s,x}$. 
\end{ex}

\begin{df}[\bf Pull-back of cubes]
\label{df:pullback cubes}
Let $S$ and $T$ be sets, $P$ and $Q$ partially ordered sets 
and $f:P^S \to Q^T$ an order-preserving map. 
Then composition with $f$ induces 
the canonical natural transformations 
$f^{\ast}:\Cub^{(T,Q)}\to \Cub^{(S,P)}$ and 
$f^{\ast}:\CoCub^{(T,Q)}\to\CoCub^{(S,P)}$. 
For any $(T,Q)$-(co)cube $x$ in a category $\calD$, 
we call $f^{\ast}x$ the {\bf pull-back of $x$} 
({\bf along $f$}).
\end{df}

\begin{df}[\bf Attachment of objects to cubes]
\label{df:attachment of cube}
Let $S$ be a non-empty set, 
$P$ a partially ordered set with the minimum element $m$. 
We denote the minimum element in $P^S$ by $\emptyset$. 
Moreover let $x$ be an $(S,P)$-cube in a category $\calD$, 
$f:x_{\emptyset} \to y$ a morphism in $\calD$. 
We define an $(S,P)$-cube $x_{f,y}:{(P^S)}^{\op} \to \calD$ 
as follows. 
$x_{f,y}$ is equal to $x$ on $P^S\ssm\{\emptyset\}$ and 
we put $(x_{f,y})_{\emptyset}:=y$ and 
$x_{f,y}(\emptyset\leq U):=f\circ x(\emptyset\leq U)$ 
for any element $U$ in 
$P^S\ssm\{\emptyset\}$.
We call $x_{f,y}$ the {\bf attachment of $y$ to $x$} 
({\bf by $f$}).
\end{df}

\begin{nt}[\bf Dual (co)cube]
\label{nt:dual (co)cube}
Let $T$ be a finite totally ordered set. 
Then we have a unique isomorphism 
$D_T:T^{\op}\isoto T$ of partially ordered sets and 
the map $D_T$ induces an isomorphism of 
partially ordered sets
$${(T^S)}^{\op}={(T^{\op})}^S\overset{{(D_T)}^S}{\isoto} T^S .$$
Let $x$ be an $(S,T)$-cube 
(resp. $(S,T)$-cocube) in a category $\calD$. 
We define $\hat{x}$ the {\bf dual $(S,T)$-cocube} 
(resp. {\bf dual $(S,T)$-cube}) ({\bf of $x$}) by 
$\hat{x}:={(x{(D_T)}^S)}^{-1} $ (resp. $\hat{x}:=x{(D_T)}^S$).
\end{nt}

\begin{ex}[\bf Dual of $S$-(co)cubes]
\label{ex:dual of cube}
Let $x$ be an $S$-(co)cube in a category $\calD$. 
Then we have the equalities $\hat{x}_T=x_{S\ssm T}$ and 
$d_T^{t,\hat{x}}:=d_{(S\ssm T)\coprod\{t\}}^{t,x}$ 
for any $T\in\cP(S)$ and any $t\in T$. 
\end{ex}

\sn
The following lemma is sometimes useful 
for studying morphisms of cubes.

\begin{lem} 
\label{lem:genofcubemap} 
Let $S$ be a set and $P$ a totally ordered set 
and $x$ and $y$ $(S,P)$-cubes in a category $\calD$. 
Then\\
$\mathrm{(1)}$ 
Let $U$ and $V$ be elements in $P^S$ such that 
$U\leq V$ and the set $[U,V]$ is a finite set. 
Then the morphism $x(U \leq V)$ 
is described as compositions of boundary morphisms.\\
$\mathrm{(2)}$ 
Assume that $S$ and $P$ are  finite sets. 
Let 
$f={\{f_U:x_U \to y_U\}}_{U\in P^S}$ 
be a family of morphisms 
in $\calD$. 
Then $f:x \to y$ is a morphism of $(S,P)$-cubes in $\calD$ 
if and only if for any $U\in P^S$ and $s\in U$, 
we have the equality $d_U^{y,s}f_U=f_{U\ssm\{s\}}d_U^{x,s}$. 
\qed
\end{lem}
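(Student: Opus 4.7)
The plan is to prove both parts by an induction that exploits the total ordering on $P$: inside any finite interval $[U,V]\subset P^S$, predecessors exist coordinatewise, so every order relation can be resolved into elementary ``drop one coordinate by one step'' moves, which are exactly boundary morphisms. Part $\mathrm{(2)}$ will then be a bootstrap from part $\mathrm{(1)}$.

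For part $\mathrm{(1)}$, first note that the identification $[U,V]=\prod_{s\in S}[U(s),V(s)]$ together with finiteness of $[U,V]$ forces $U(s)=V(s)$ for all but finitely many $s\in S$ and each factor $[U(s),V(s)]\subset P$ to be finite. Define $n(U,V):=\sum_{s\in S}\bigl(\#[U(s),V(s)]-1\bigr)$, a well-defined non-negative integer. I would induct on $n(U,V)$. The base case $n(U,V)=0$ forces $U=V$ and $x(U\leq V)=\id_{x_U}$, an empty composition. If $n(U,V)\geq 1$, pick any $s\in S$ with $U(s)<V(s)$; by totality and finiteness of $[U(s),V(s)]$ the predecessor $\Pre V(s)$ exists in this interval, so in the notation of \ref{nt:setin} the element $V':=V\ssm\{s\}$ satisfies $U\leq V'<V$ and $n(U,V')=n(U,V)-1$. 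By the definition of boundary morphism $x(V'<V)=d_V^{s,x}$, and functoriality of $x$ gives $x(U\leq V)=d_V^{s,x}\circ x(U\leq V')$. The inductive hypothesis expresses $x(U\leq V')$ as a composition of boundary morphisms, finishing this part.

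For part $\mathrm{(2)}$, the ``only if'' direction is immediate: $d_U^{y,s}f_U=f_{U\ssm\{s\}}d_U^{x,s}$ is exactly naturality of $f$ at the morphism $U\ssm\{s\}<U$. For the ``if'' direction, one must verify $y(U\leq V)\circ f_V=f_U\circ x(U\leq V)$ for an arbitrary order relation $U\leq V$ in $P^S$. Since $S$ and $P$ are finite, part $\mathrm{(1)}$ applies; I would choose a single chain $U=W_0<W_1<\cdots<W_n=V$ obtained by the recursive construction above so that both $x(U\leq V)$ and $y(U\leq V)$ decompose as composites of boundary morphisms along the same chain. A straightforward induction on $n$, pasting together the commutative squares supplied by the hypothesis at each elementary step $W_{i-1}<W_i$, then propagates commutativity to the full composite.

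The argument is essentially routine and there is no substantive obstacle; the only subtle bookkeeping point is that the decompositions for $x$ and $y$ in part $\mathrm{(2)}$ must be taken along a common chain so that the elementary naturality squares paste correctly, but this is automatic from the recursive construction used to prove part $\mathrm{(1)}$.
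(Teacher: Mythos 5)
Your proof is correct and is exactly the routine argument the paper intends (the lemma is stated with its proof omitted as standard): induct on the length $n(U,V)$ of the finite interval, peel off one predecessor at a time to get part $\mathrm{(1)}$, and paste the elementary naturality squares along a common chain for part $\mathrm{(2)}$. The only blemish is a composition-order slip in part $\mathrm{(1)}$: since $x$ is contravariant, the factorization should read $x(U\leq V)=x(U\leq V')\circ d_V^{s,x}$ (the boundary morphism $x_V\to x_{V'}$ is applied first), which does not affect the validity of the induction.
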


\begin{nt}
\label{nt:Pi_x}
Let $\calD$ be a category closed under finite limits, 
$\{x_s\onto{a_s} x\}_{s\in S}$ and 
$\{y_s\onto{b_s} y\}_{s\in S}$ families of morphisms 
to objects $x$ and $y$ in $\calD$ respectively indexed by 
a non-empty finite set $S=\{s_1,\cdots,s_r\}$, 
$f:x\to y$ a morphism in $\calD$ and 
$\{f_s:x_s\to y_s\}_{s\in S}$ 
a family of morphisms in $\calD$ indexed by $S$ 
such that $b_sf_s=fa_s$ for any element $s$ in $S$. 
Then we write $\underset{\!\!\!\!s\in S}{\prod_x} x_s$ and 
$\underset{\!\!\!\!s\in S}{\prod_{f}} f_s$ for 
$x_{s_1}\times_x x_{s_2}\times_x \cdots \times_x x_{s_r}$ and 
$f_{s_1}\times_f f_{s_2}\times \cdots \times_f f_{s_r}:x_{s_1}\times_x\cdots \times_x x_{s_r} \to y_{s_1}\times_y\cdots \times_y y_{s_r}$ respectively. 
\end{nt}

\begin{df}[\bf Coverings, associated cubes of coverings]
\label{df:P-th covering}
Let $P$ be a partially ordered set with the minimum element $m$, 
$S$ a non-empty finite set 
and $x$ an object in a category $\calD$.\\
$\mathrm{(1)}$ 
A {\bf $P$-covering} ({\bf of $x$ indexed by a non-empty set $S$}) 
is a family 
of contravariant functors 
$\fx:=\{x_s:P^{\op} \to \calD\}_{s\in S}$ 
such that $x_s(m)=x$ for any element $s$ in $S$.\\ 
$\mathrm{(2)}$ 
Let us assume that $S$ is a finite set and 
$\calD$ is closed under finite limits. 
Then for any $P$-covering 
$\fx=\{x^s:P^{\op}\to \calD\}_{s\in S}$ 
of an object $x$ in $\calD$ indexed by $S$, 
we can associate $\fx$ with 
the $(P,S)$-cube $\Fib\fx$ in $\calD$ as follows. 
For any elements $U$ and $V$ in $P^S$ such that $U\leq V$, 
we put ${(\Fib\fx)}_U:=\underset{\!\!\!\!s\in S}{\prod_x} x^s(U(s))$ and 
$(\Fib\fx)(U\leq V):=\underset{\!\!\!\!\!\!\!\!s\in S}{\prod_{\id_x}}x^s(U(s)\leq V(s))$. 
We call $\Fib\fx$ the 
{\bf $(S,P)$-cube associated with the $P$-covering $\fx$}.
\end{df}

\begin{df}[\bf Pull-back of coverings]
\label{df:pull-back covering}
Let $P$ and $Q$ be partially ordered sets 
with the minimum elements, 
$f:P \to Q$ an order preserving map preserving the minimum element, 
$x$ an object in a category $\calD$ and 
$\fx=\{x_s:Q^{\op} \to\calD\}_{s\in S}$ $Q$-covering of $x$ 
indexed by a non-empty set $S$. 
Then we put $f^{\ast}\fx:=\{x_sf^{\op}:P^{\op}\to\calD\}_{s\in S}$ 
and call it the {\bf pull-back of $\fx$} ({\bf along $f$}).
\end{df}

\begin{ex}
\label{ex:multiplication map}
For any positive integer $m$, 
the map $\mathbf{m}:[1] \to [m]$ which 
sends $0$ to $0$ and $1$ to $m$ is 
order preserving map preserving the minimum element. 
Therefore 
for any $[m]$-covering $\fx$ of an object $x$ in a category, 
we can define the $[1]$-covering $\mathbf{m}^{\ast}\fx$ of $x$. 
\end{ex}

\begin{nt}
\label{nt:abb of f^S}
Let $P$ and $Q$ be partially ordered sets and 
$f:P\to Q$ an order preserving map. 
For any set $S$, 
composition with $f$ induces the order preserving map 
$f^S:P^S \to Q^S$. 
If $P$ and $Q$ possess the minimum elements and 
$f$ preserves the minimum element, 
then $P^S$ and $Q^S$ also possess the minimum elements and 
the oredered map $f^S$ also preserves the minimum element. 
The map $f^S$ is sometimes abbreviated to $f$.
\end{nt}

\begin{df}[\bf Attachment of morphisms to coverings]
\label{df:attachment of coverings}
Let $P$ be a partially ordered set with the minimum element $m$, 
$\fx:=\{x^s:P^{\op}\to \calD\}_{s\in S}$ a $P$-covering 
of an object $x$ in a category $\calD$ indexed by 
a non-empty set $S$ and $f:x \to y$ a morphism in $\calD$.\\
$\mathrm{(1)}$ 
We define a partially ordered set 
$P^{\ast}:=P\coprod \{-\infty\}$ 
where $-\infty$ is a symbol and $p>-\infty$ for any 
element $p$ in $P$.\\
$\mathrm{(2)}$ 
We define $\iota_P:P\to P^{\ast}$ 
to be an order preserving map by sending an element 
$p$ to $p$ if $p\neq m$ and $m$ to $-\infty$.\\
$\mathrm{(3)}$ 
We define the $P^{\ast}$-covering 
$\fx_{f,y}:=\{x^s_{f,y}:{P^{\ast}}^{\op} \to \calD\}_{s\in S}$ 
of $y$ indexed by $S$ as follows. 
For any element $s$ in $S$, 
$x^s_{f,y}$ is equal to $x^s$ on $P$ and 
$x^s_{f,y}(-\infty)=y$ and 
$x^s_{f,y}(-\infty <p):=f\circ x^s(m\leq p)$ 
for any element $p$ in $P$.
\end{df}

\begin{ex}
\label{ex:compat atta and fib}
Let $P$ be a partially ordered set with the minimum element $m$, 
$\fx:=\{x^s:P^{\op}\to \calD\}_{s\in S}$ a $P$-covering 
of an object $x$ in a category $\calD$ indexed by 
a non-empty set $S$ and $f:x \to y$ a monomorphism in $\calD$. 
Let us assume that $\calD$ is closed under taking finite limits. 
Then we have the canonical isomorphism
\begin{equation}
\label{equ:att and fib}
\iota_P^{\ast}\Fib(\fx_{f,y})\isoto {(\Fib \fx)}_{f,y}.
\end{equation}
\end{ex}

\begin{ex}
\label{ex:definition of ux and cx}
Let $P$ be a partially ordered set with the minimum element $m$, 
$S$ a non-empty finite set 
and $x$ an $(S,P)$-cube in a category $\calD$.\\
$\mathrm{(1)}$ 
Recall the definition of the map $\emptyset:S \to P$ from 
Definition~\ref{df:attachment of cube}. 
It is the minimum element in $P^S$. 
Namely, it sends any element in $S$ to $m$.\\
$\mathrm{(2)}$ 
For any elements $p$ in $P$ and $s$ in $S$, 
we define a map $\delta_{s,p}:S \to P$ 
by sending an element $t$ in $S$ to $p$ if $s=t$ 
and to $m$ if $s\neq t$. 
Obviously for any elements $p\leq p'$, 
we have an inequality $\delta_{s,p}\leq \delta_{s,p'}$.\\
$\mathrm{(3)}$ 
We associate with $x$ a $P$-covering 
$\fU x:=\{x^s:P^{\op} \to \calD \}_{s\in S}$ of $x_{\emptyset}$ 
indexed by $S$ as follows. 
For any element $s$ in $S$, 
we define $x^s:P^{\op} \to \calD$ to 
be the functor which sends an element $p$ in $P$ to 
$x_{\delta_{s,p}}$ and any pair $p\leq p'$ 
in $P$ to $x(\delta_{s,p}\leq\delta_{s,p'})$.\\
$\mathrm{(4)}$ 
If $\calD$ is closed under finite limits and $S$ is a non-empty finite set, 
then we have the canonical morphism of $(S,P)$-cubes 
$C(x):x \to \Fib \fU x$ which is 
induced from the identity morphisms on $x_{\delta_{s,p}}$ 
for any elements $s$ in $S$ and $p$ in $P$ 
by the universal property of fiber products.
\end{ex}

\begin{df}[\bf Fibered cubes]
\label{df:fibered S,P-cubes}
Let $P$ be a partially ordered set with 
the minimum element $m$, 
$S$ a non-empty finite set, 
$\calD$ a category closed under finite limits. 
An $(S,P)$-cube $x$ in $\calD$ is {\bf fibered} if 
the canonical morphism of 
$(S,P)$-cubes $C(x):x \to \Fib \fU x$ is an isomorphism.
\end{df}

\begin{lem}[\bf Compatibility of pull-backs]
\label{lem:comp pull-back}
Let $P$ and $Q$ be partially ordered sets 
with the minimum elements, 
$f:P\to Q$ 
an order-preserving map which preserves the minimum element, 
$S$ a non-empty set, 
$\calD$ a category closed under finite limits.\\
$\mathrm{(1)}$ 
For a $Q$-covering indexed by $S$, 
$\fx:=\{x^s:Q^{\op}\to\calD\}_{s\in S}$ 
of an object $z$ in $\calD$, 
we have the canonical isomorphism of $(P,S)$-cubes
$$\Fib(f^{\ast}\fx)\isoto f^{\ast}\Fib \fx. $$
$\mathrm{(2)}$ 
For a $(Q,S)$-cube $x$ in $\calD$, 
we have the canonical equality of $P$-coverings
$$f^{\ast}\fU x=\fU f^{\ast}x.$$
$\mathrm{(3)}$ 
For a fibered $(Q,S)$-cube $y$ in $\calD$, 
the pull-back $f^{\ast}y$ of $y$ along $f$ 
is a fibered $(P,S)$-cube.
\end{lem}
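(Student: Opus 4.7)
The plan is to verify assertions (1) and (2) by unwinding definitions, and then deduce (3) as a formal consequence of both.

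For (1), by definition $(f^{\ast}\fx)^s = x^s \circ f^{\op}$, so for any $U \in P^S$ and $s \in S$ one has $(f^{\ast}\fx)^s(U(s)) = x^s(f(U(s))) = x^s((f^S U)(s))$. Taking the $S$-indexed fiber product over $z$ at each $U$ therefore yields
$$\Fib(f^{\ast}\fx)_U = \underset{\!\!\!\!s\in S}{\prod_z} x^s(f(U(s))) = (\Fib \fx)_{f^S(U)} = (f^{\ast}\Fib \fx)_U.$$
Naturality in $U \leq V$ is immediate, since both sides are the canonical morphisms between fiber products induced by the same component maps $x^s(f(U(s)) \leq f(V(s)))$.

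For (2), the key observation is that $f^S$ commutes with the elementary maps $\delta_{s,-}$ introduced in Example~\ref{ex:definition of ux and cx}. Writing $m_P$ and $m_Q$ for the minimum elements of $P$ and $Q$, the hypothesis $f(m_P) = m_Q$ yields $f^S(\delta^P_{s,p}) = \delta^Q_{s, f(p)}$ for every $s \in S$ and $p \in P$. Hence
$$(f^{\ast}x)^s(p) = x_{f^S(\delta^P_{s,p})} = x_{\delta^Q_{s,f(p)}} = x^s(f(p)) = (f^{\ast}\fU x)^s(p),$$
and compatibility with the transition morphisms indexed by $P^{\op}$ follows from the same calculation applied to inequalities $p \leq p'$ in $P$.

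For (3), apply $f^{\ast}$ to the isomorphism $C(y): y \isoto \Fib \fU y$ witnessing fiberedness of $y$ to obtain an isomorphism $f^{\ast}y \isoto f^{\ast}\Fib \fU y$. Composing with the isomorphisms supplied by (1) and (2) produces
$$f^{\ast}y \isoto f^{\ast}\Fib \fU y \isoto \Fib(f^{\ast}\fU y) = \Fib \fU(f^{\ast}y).$$
It remains to check that this composite coincides with the canonical morphism $C(f^{\ast}y)$. This follows because both morphisms are determined, via the universal property of the fiber product, by the same family of component projections to the vertices $x_{\delta^Q_{s,f(p)}}$, together with the naturality already established in (1). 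The expected main obstacle is exactly this last bookkeeping step, but no real difficulty arises, since every construction in sight is characterized by the same universal property applied to identical data.
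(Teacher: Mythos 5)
Your proposal is correct and follows essentially the same route as the paper: assertions (1) and (2) are verified by the pointwise identities $\Fib(f^{\ast}\fx)_U=\underset{\!\!\!\!s\in S}{\prod_z}x^s(fU(s))=(f^{\ast}\Fib\fx)_U$ and $f\delta_{s,p}=\delta_{s,f(p)}$, and (3) follows by identifying $C(f^{\ast}y)$ with $f^{\ast}C(y)$ through the commutative square those identifications provide. The only quibble is a notational slip in (2), where $(f^{\ast}x)^s(p)$ should read $(\fU f^{\ast}x)^s(p)$.
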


\begin{proof}[\bf Proof]
For any elements $s$ in $S$ and $p$ in $P$ and 
any map $U$ from $S$ to $P$, 
we have the canonical equalities 
$${\Fib (f^{\ast}x)}_U=\underset{\!\!\!\!s\in S}{{\prod}_z}x^s(fU(s))=
{(f^{\ast}\Fib x)}_U \ \ \ \text{and}$$
$$f\delta_{s,p}=\delta_{s,f(p)}.$$
Assertions $\mathrm{(1)}$ and $\mathrm{(2)}$ follow from the 
equalities above respectively. 
For assertion $\mathrm{(3)}$, 
we have the commutative diagram of $(P,S)$-cubes. 
$$
{\footnotesize{
\xymatrix{
f^{\ast}y \ar[r]^{\!\!\!\!C(f^{\ast}y)} 
\ar[d]^{\wr}_{f^{\ast}C(y)} & 
\Fib\fU f^{\ast}y \ar@{=}[d]^{\textbf{I}}\\
f^{\ast}\Fib y \ar[r]^{\!\!\!\!\sim}_{\!\!\!\!\textbf{II}} & 
\Fib f^{\ast}\fU y.
}}}
$$
Since the morphisms 
$f^{\ast}C(y)$, $\textbf{I}$ and $\textbf{II}$ 
are isomorphisms by assumption and assertions 
$\mathrm{(1)}$ and $\mathrm{(2)}$, 
$C(f^{\ast}y)$ is also isomorphism. 
Therefore $f^{\ast}y$ is also fibered. 
\end{proof}

\begin{df}[\bf Power sets of cubes and coverings] 
\label{df:powerset of cubes}
Assume that $P$ has the minimum element $m$.\\
$\mathrm{(1)}$ 
Let $x$ be a monic $P^S$-cube in an abelian category $\cA$. 
Then we may ragard all vertices of $x$ as subobjects of $x_{\emptyset}$. 
We write $\cP(x)$ for the sublattice of $\cP(x_{\emptyset})$ 
generated by all vertices of $x$.\\
$\mathrm{(2)}$ 
Let $z$ be an object in $\cA$ and $\fz$ is a $P$-covering of $z$. 
Assume that $\Fib \fz$ is monic. 
Then we write $\cP(\fz)$ for $\cP(\Fib \fz)$. 
\end{df}

\begin{lem}[\bf Caracterization of fibered cubes] 
\label{lem:char of fibered S-cube}
Let $S$ be a finite set such that $\# S\geq 2$, 
then the following conditions are equivalent 
for any $S$-cube $x$ in a category $\calD$ closed under finite limits.\\
$\mathrm{(1)}$ 
The $S$-cube $x$ is fibered.\\
$\mathrm{(2)}$ 
For any pair $(s,t)$ of distinct elements in $S$ and 
for any subset $T\subset S\ssm\{s,t\}$, 
the commutative diagram
$${\footnotesize{\xymatrix{
x_{T\coprod\{s,t\}} 
\ar[r]^{d_{T\coprod\{s,t\}}^{t,x}} 
\ar[d]_{d_{T\coprod\{s,t\}}^{s,x}} & 
x_{T\coprod\{s\}} \ar[d]^{d_{T\coprod\{s\}}^{s,x}}\\
x_{T\coprod\{t\}} \ar[r]_{d_{T\coprod\{t\}}^{t,x}} & 
x_T
}}}$$
is a Cartesian square.
\end{lem}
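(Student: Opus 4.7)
My plan is to exploit the explicit description of $\Fib \fU x$ at each vertex. Under the isomorphism $\cP(S) \cong [1]^S$, a subset $T \subseteq S$ corresponds to $\chi_T$, and the vertex $(\Fib \fU x)_T$ equals the iterated fibre product $\underset{\!\!\!\!u \in S}{\prod_{x_\emptyset}} x^u(\chi_T(u))$. Since $x^u(0) = x_\emptyset$ is itself the base over which the fibre product is formed, the factors with $u \notin T$ contribute trivially and the expression collapses to $\underset{\!\!\!\!u \in T}{\prod_{x_\emptyset}} x_{\{u\}}$, interpreted as $x_\emptyset$ when $T = \emptyset$. Under this identification, $C(x)_T$ is exactly the morphism induced by the universal property from the boundary compositions $x_T \to x_{\{u\}}$ for $u \in T$.

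For $(1) \Rightarrow (2)$: because $C(x)$ is an isomorphism of $S$-cubes and, being a natural transformation, commutes with the boundary morphisms, it translates the square in $(2)$ into the corresponding square for $\Fib \fU x$. Setting $A := \underset{\!\!\!\!u \in T}{\prod_{x_\emptyset}} x_{\{u\}}$, that square has top-left vertex $A \times_{x_\emptyset} x_{\{s\}} \times_{x_\emptyset} x_{\{t\}}$, and the two outgoing arrows are the projections that forget the $x_{\{t\}}$-factor and the $x_{\{s\}}$-factor respectively; the square is then manifestly Cartesian by the associativity of fibre products.

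For $(2) \Rightarrow (1)$: I would prove by induction on $\# T$ that $C(x)_T$ is an isomorphism. The cases $\# T \leq 1$ are immediate from the definitions. For $\# T \geq 2$, fix distinct $s, t \in T$ and set $T_0 := T \ssm \{s, t\}$. Hypothesis $(2)$ identifies $x_T$ with $x_{T \ssm \{t\}} \times_{x_{T_0}} x_{T \ssm \{s\}}$ via the boundary maps $d^t_T$ and $d^s_T$. The inductive hypothesis identifies the three smaller vertices with the corresponding iterated fibre products; combining these via the decompositions $T \ssm \{t\} = T_0 \coprod \{s\}$ and $T \ssm \{s\} = T_0 \coprod \{t\}$ and the associativity of fibre products yields a canonical isomorphism $x_T \isoto (\Fib \fU x)_T$.

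The main point of care, which I expect to be routine bookkeeping rather than a genuine obstacle, is verifying that the isomorphism just constructed coincides with $C(x)_T$. By the universal property of the fibre product defining $(\Fib \fU x)_T$, it suffices to check that the two morphisms agree after composition with each projection $(\Fib \fU x)_T \to x_{\{u\}}$; this follows by induction together with the functorial compatibility of $C(x)$ with faces recorded in Lemma~\ref{lem:comp pull-back}. Independence of the choice of $s, t$ is then automatic from the universal characterisation.
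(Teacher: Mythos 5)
Your proposal is correct and follows essentially the same route as the paper: the implication $(1)\Rightarrow(2)$ by transporting the square along the isomorphism $C(x)$ to a manifestly Cartesian square of fibre products, and $(2)\Rightarrow(1)$ by induction on $\# T$, fixing distinct $s,t\in T$ and identifying $C(x)_T$ with the induced morphism $C(x)_{T\ssm\{t\}}\times_{C(x)_{T\ssm\{s,t\}}}C(x)_{T\ssm\{s\}}$ via the universal property of $(\Fib\fU x)_T$. The only cosmetic difference is that you spell out the collapse of the trivial $x_{\emptyset}$-factors and the verification that your constructed isomorphism agrees with $C(x)_T$, which the paper leaves implicit.
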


\begin{proof}[\bf Proof] 
Obviously assertion $\mathrm{(1)}$ implies 
assertion $\mathrm{(2)}$. 
We prove the converse implication. 
Namely we prove that $C(x)_T$ is an isomorphism for any $T$ by 
induction on the cardinality of $T$. 
If $\# T\leq 1$, then $C(x)_T$ is the identity morphism. 
For any subset $T$ of $S$ such that $\# T\geq 2$, 
we fix a pair of distinct elements $s$ and $t$ in $T$. 
By hypothesis, 
the square 
$${\footnotesize{\xymatrix{ 
x_T \ar[r]^{d^{s,x}_T} \ar[d]_{d^{t,x}_T} & 
x_{T\ssm\{s\}} \ar[d]^{d^{t,x}_{T\ssm\{s\}}}\\
x_{T\ssm\{t\}} \ar[r]_{d^{t,x}_{T\ssm\{t\}}} & 
x_{T\ssm\{s,t\}}
}}}$$
is a Cartesian square. 
Since $C(x)_{U}$ is an isomorphism 
for any proper subset $U$ of $T$ by the inductive hypothesis,
$C(x)_T=C(x)_{T\ssm\{t\}}\times_{C(x)_{T\ssm\{s,t\}}}C(x)_{T\ssm\{s\}}$ 
is also an isomorphism 
by the universal property of the fiber product $(\Fib \fU x)_T$. 
Hence we get the desired assertion. 
\end{proof}

\begin{ex}
\label{ex:ass double cubes subobj}
Let $\calD$ be a category closed under finite limits, 
$S$ a non-empty finite set, 
$x$ an object in $\calD$, 
$\fx=\{x^s:[2]^{\op} \to \calD\}_{s\in S}$ 
a $[2]$-covering of $x$ indexed by $S$ 
and $T$ a subset of $S$. 
We set 
$\fx|_T:=\fU e_T^{\ast}\Fib\fx$, 
which is a $[1]$-covering of 
${(\Fib \fx)}_{T,\emptyset}$ 
indexed by $S$, 
where $e_T$ is as in Example~\ref{ex:power set} $\mathrm{(3)}$. 
Then we have the canonical isomorphisms 
\begin{equation}
\label{equ:compt res and fib}
e_T^{\ast}(\Fib \fx)\isoto \Fib (\fx|_T)
\end{equation}
\begin{equation}
\label{equ:comt res and fib }
\mathbf{2}^{\ast}(\Fib \fx) \isoto \Fib ({\mathbf{2}}^{\ast}\fx)
\end{equation}
by Lemma~\ref{lem:comp pull-back} $\mathrm{(1)}$ 
and $\mathrm{(3)}$. 
Here $\mathbf{2}$ is as in Example~\ref{ex:multiplication map}.
\end{ex}

\begin{lem}[\bf Characterization of fibered double cubes]
\label{lemdf:fib dou cubes}
Let $x$ be a double $S$-cube in a category 
$\calD$ closed under finite limits.\\
$\mathrm{(1)}$ 
For any subset $T$ of $S$, we have the commutative diagrams 
$${\footnotesize{
\xymatrix{
e_T^{\ast}x \ar[r]^{e_T^{\ast}C(x)} \ar[d]_{C(e_T^{\ast}x)} & 
e_T^{\ast}(\Fib \fU x) \ar[d]^{\wr}\\
\Fib (\fU (e_T^{\ast}x)) \ar[r]_{\sim} & 
\Fib((\fU x)|_T)
}
\xymatrix{
\mathbf{2}^{\ast}x \ar[r]^{\mathbf{2}^{\ast}C(x)} 
\ar[d]_{C(\mathbf{2}^{\ast}x)} & 
\mathbf{2}^{\ast}(\Fib \fU x) \ar[d]^{\wr}\\
\Fib (\fU (\mathbf{2}^{\ast}x)) \ar[r]_{\sim} & 
\Fib ({\mathbf{2}^{\ast}}(\fU x)). 
}}}$$
$\mathrm{(2)}$ 
If $\# S\geq 2$, 
then the following conditions are equivalent:\\
$\mathrm{(i)}$ 
The $S$-cubes 
$e_T^{\ast}x$ is fibered $S$-cubes for any subset $T$ of $S$.\\
$\mathrm{(ii)}$ 
The $S$-cubes 
$\mathbf{2}^{\ast}x$ and $e_T^{\ast}x$ are fibered $S$-cubes for any proper subset $T$ of $S$.\\
$\mathrm{(iii)}$ 
The double $S$-cube $x$ is fibered.\\
$\mathrm{(iv)}$ 
The canonical morphism $e_T^{\ast}C(x)$ 
is an isomorphism of $S$-cubes for any subset $T$ of $S$.\\
$\mathrm{(v)}$ 
The canonical morphisms $\mathbf{2}^{\ast}C(x)$ and $e_T^{\ast}C(x)$ are 
isomorphisms of $S$-cubes for any proper subset of $S$.
\end{lem}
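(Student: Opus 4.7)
The plan is to reduce everything to the compatibility of $\Fib$ and $\fU$ with pullbacks, established in Lemma~\ref{lem:comp pull-back}, together with a small combinatorial observation about how $\DP(S)$ is covered by the images of the maps $e_T$ and $\mathbf{2}$.

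For part $\mathrm{(1)}$, I would verify each diagram as follows. On the right vertical arrow, the isomorphism is the pullback compatibility $\Fib(f^\ast \fU x) \isoto f^\ast \Fib\fU x$ of Lemma~\ref{lem:comp pull-back} $\mathrm{(1)}$, applied with $f = e_T$ or $f = \mathbf{2}$. On the bottom horizontal arrow, Lemma~\ref{lem:comp pull-back} $\mathrm{(2)}$ gives $\fU(e_T^\ast x) = e_T^\ast \fU x$, and then the isomorphism $e_T^\ast \fU x \isoto (\fU x)|_T$ is the canonical one of Example~\ref{ex:ass double cubes subobj} (and analogously for $\mathbf{2}$). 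Commutativity is the naturality of $C(-)$ with respect to pullback along order-preserving maps that preserve the minimum element, which is immediate from the definition of $C$ via the universal property.

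For part $\mathrm{(2)}$, I would first record the identity $e_S = \mathbf{2}$ of order-preserving maps $[1]^S \to [2]^S$: under the isomorphisms (\ref{equ:powerset}) and (\ref{equ:doublepowerset}), $e_S(U) = U\textbf{+}S = (\emptyset, U)$ corresponds to the function $2\chi_U = \mathbf{2}(\chi_U)$. Consequently $e_S^\ast x = \mathbf{2}^\ast x$ and $e_S^\ast C(x) = \mathbf{2}^\ast C(x)$, giving $\mathrm{(iv)} \Leftrightarrow \mathrm{(v)}$ and $\mathrm{(i)} \Leftrightarrow \mathrm{(ii)}$ at once. Next, I would prove $\mathrm{(iii)} \Leftrightarrow \mathrm{(iv)}$ by observing that every element $(U,V) \in \DP(S)$ lies in the image of some $e_T$; indeed, taking $A = \emptyset$ and $B = U$ in (\ref{equ:eUV 2}) yields $(U,V) = e_V(U \coprod V)$. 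Since a natural transformation of functors with values in $\calD$ is an isomorphism if and only if it is pointwise an isomorphism, $C(x)$ is an isomorphism if and only if every $e_T^\ast C(x)$ is. Finally, $\mathrm{(i)} \Leftrightarrow \mathrm{(iv)}$ and $\mathrm{(ii)} \Leftrightarrow \mathrm{(v)}$ are read off from the diagrams in part $\mathrm{(1)}$: the right vertical and bottom horizontal arrows in each square are isomorphisms, so the top arrow is an isomorphism precisely when the left vertical arrow is, i.e.\ precisely when $e_T^\ast x$ (respectively $\mathbf{2}^\ast x$) is fibered.

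The argument is essentially formal once the diagrams of part $\mathrm{(1)}$ are in hand; the only point that requires care is the bookkeeping between the three different incarnations $[2]^S$, $\DP(S)$, and $\cP(S)\times\cP(S)$ of the indexing category, together with the explicit action of $e_T$ and $\mathbf{2}$. The crucial identifications are $e_S = \mathbf{2}$ and the covering property $\DP(S) = \bigcup_{T\subset S} e_T(\cP(S))$; once these are recorded, no further combinatorics is needed.
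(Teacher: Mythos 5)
Your overall strategy is close to the paper's (the paper disposes of part $\mathrm{(1)}$ as ``straightforward'', declares $\mathrm{(iii)}$, $\mathrm{(iv)}$, $\mathrm{(v)}$ ``obviously equivalent'', and deduces $\mathrm{(i)}\Leftrightarrow\mathrm{(iv)}$ and $\mathrm{(ii)}\Leftrightarrow\mathrm{(v)}$ from part $\mathrm{(1)}$ together with Lemma~\ref{lem:char of fibered S-cube}), but there is one genuine error: the identity $e_S=\mathbf{2}$ is false. By definition $e_S(U)=U{\textbf{+}}S=(U\ominus S,\ U\cap S)=(S\ssm U,\ U)$, i.e.\ the function $\chi_U+\chi_S$, which takes the value $1$ on $S\ssm U$ and $2$ on $U$; whereas $\mathbf{2}(\chi_U)=2\chi_U$ corresponds to $(\emptyset,U)$. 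The two maps agree only at $U=S$. Concretely ${(e_S^{\ast}x)}_{\emptyset}=x_{S,\emptyset}$ while ${(\mathbf{2}^{\ast}x)}_{\emptyset}=x_{\emptyset,\emptyset}$; indeed Theorem~\ref{thm:big adm implies small adm} treats $\mathbf{2}^{\ast}x$ (a hypothesis) and $e_S^{\ast}x$ (the conclusion) as genuinely different $S$-cubes. So your derivation of $\mathrm{(iv)}\Leftrightarrow\mathrm{(v)}$ and $\mathrm{(i)}\Leftrightarrow\mathrm{(ii)}$ collapses as written.

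The gap is repairable by pushing your covering argument a little further. For $\mathrm{(iv)}\Rightarrow\mathrm{(v)}$: since $\mathbf{2}(\chi_U)=(\emptyset,U)=e_U(U)$, the image of $\mathbf{2}$ lies in $\bigcup_{T\subset S}e_T(\cP(S))$, so $\mathrm{(iv)}$ yields $\mathrm{(iii)}$ by your pointwise argument and hence $\mathbf{2}^{\ast}C(x)$ is an isomorphism. For $\mathrm{(v)}\Rightarrow\mathrm{(iv)}$: by $(\ref{equ:eUV 2})$ every vertex $(U,V)\in\DP(S)$ other than $(\emptyset,S)$ equals $e_T(W)$ for some \emph{proper} $T$ (take $T=U\coprod V$ if $U\coprod V\neq S$, and $T=(U\ssm\{u\})\coprod V$ for some $u\in U$ otherwise), while the remaining vertex is $(\emptyset,S)=\mathbf{2}(\chi_S)$; so $\mathrm{(v)}$ again forces $C(x)$ to be a pointwise isomorphism, giving $\mathrm{(iii)}$ and then $\mathrm{(iv)}$. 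Combined with your correct equivalences $\mathrm{(iii)}\Leftrightarrow\mathrm{(iv)}$, $\mathrm{(i)}\Leftrightarrow\mathrm{(iv)}$ and $\mathrm{(ii)}\Leftrightarrow\mathrm{(v)}$, this closes the proof; the rest of what you wrote, including part $\mathrm{(1)}$, is fine.
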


\begin{proof}[\bf Proof]
Assertion $\mathrm{(1)}$ is straightforward. 
In assertion $\mathrm{(2)}$, 
conditions $\mathrm{(iii)}$, $\mathrm{(iv)}$ 
and $\mathrm{(v)}$ are obviously equivalent. 
By the virtue of the assertion $\mathrm{(1)}$ 
and Lemma~\ref{lem:char of fibered S-cube}, 
condition $\mathrm{(i)}$ (resp. $\mathrm{(ii)}$) 
is equivalent to condition $\mathrm{(iv)}$ (resp. $\mathrm{(v)}$). 
Hence we get assertion $\mathrm{(2)}$. 
\end{proof}

\begin{cor}
\label{cor:fib discribe}
Let $x$ be a monic double $S$-cube in 
an abelian category $\cA$. 
We put $\fa:=\{x_{\{s\},\emptyset} \}_{s\in S}$ 
and $\fb:=\{x_{\emptyset,\{s\}} \}_{s\in S}$. 
Moreover 
let us assume that $\mathbf{2}^{\ast}x$ 
and $e_T^{\ast}x$ are fibered for any proper subset $T$ of $S$. 
Then we have the canonical isomorphism 
$x_{U,V}\isoto \fa^{\wedge U}\wedge\fb^{\wedge V}$ 
as subobjects of $x_{\emptyset,\emptyset}$ 
for any $(U,V)$ in $\DP(S)$. 
Here the symbol $\wedge$ means the meet 
in the lattice $\cP(x)$. 
\end{cor}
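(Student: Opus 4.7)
The plan is to deduce the claim from fiberedness of $x$ as a double $S$-cube, combined with the elementary fact that in an abelian category a fiber product of subobjects over the ambient object coincides with their intersection.

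First I would invoke Lemma~\ref{lemdf:fib dou cubes}~(2): the hypothesis that $\mathbf{2}^{\ast}x$ and $e_T^{\ast}x$ are fibered for every proper subset $T\subset S$ is exactly condition (ii), which is equivalent to condition (iii), namely the fiberedness of the double $S$-cube $x$ itself. Consequently the canonical comparison morphism $C(x)\colon x\isoto\Fib\fU x$ of $(S,[2])$-cubes, introduced in Example~\ref{ex:definition of ux and cx}, is an isomorphism. (The degenerate case $\#S\leq 1$ is immediate from the definitions once we use the convention $\fa^{\wedge\emptyset}=\fb^{\wedge\emptyset}=x_{\emptyset,\emptyset}$.)

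Next I would unpack the vertex of $\Fib\fU x$ at $(U,V)\in\DP(S)$. Under the identification $[2]^S\cong\DP(S)$ of~(\ref{equ:doublepowerset}), the characteristic function $\chi_{U,V}$ takes the value $1$ on $U$, the value $2$ on $V$, and $0$ elsewhere. Reading off the construction of $\fU x$, the $s$-component of the fiber product $(\Fib\fU x)_{U,V}$ over $x_{\emptyset,\emptyset}$ is the vertex $x_{\delta_{s,\chi_{U,V}(s)}}$, which is $\fa_s=x_{\{s\},\emptyset}$ when $s\in U$, is $\fb_s=x_{\emptyset,\{s\}}$ when $s\in V$, and is $x_{\emptyset,\emptyset}$ when $s\notin U\cup V$. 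A factor equal to the ambient object is neutral in a fiber product over that object, so $x_{U,V}$ is canonically identified with the fiber product over $x_{\emptyset,\emptyset}$ of the finite family $\{\fa_s\}_{s\in U}\cup\{\fb_s\}_{s\in V}$.

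Finally, the monicity of $x$ ensures that each $\fa_s$ and each $\fb_s$ is genuinely a subobject of $x_{\emptyset,\emptyset}$. In any abelian category the fiber product over an object $z$ of a finite family of subobjects of $z$ agrees with their meet in $\cP(z)$; applying this to $z=x_{\emptyset,\emptyset}$ gives
\begin{equation*}
x_{U,V}=\bigwedge_{s\in U}\fa_s\;\wedge\;\bigwedge_{s\in V}\fb_s=\fa^{\wedge U}\wedge\fb^{\wedge V}
\end{equation*}
as subobjects of $x_{\emptyset,\emptyset}$, which is the desired identity in $\cP(x)$. I do not foresee any serious obstacle; the only care needed is the bookkeeping for the dictionary between $[2]^S$ and $\DP(S)$ and the small observation that fiber factors equal to the base are redundant.
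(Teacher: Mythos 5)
Your argument is correct and follows essentially the same route as the paper: the hypotheses are exactly condition (ii) of Lemma~\ref{lemdf:fib dou cubes}~(2), hence $x$ is a fibered double $S$-cube, and then one identifies the fiber product of subobjects of $x_{\emptyset,\emptyset}$ with their meet in $\cP(x)$. The paper's proof is just a terser statement of the same two observations, so your additional bookkeeping with $\chi_{U,V}$ and $\delta_{s,p}$ only makes explicit what the paper leaves implicit.
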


\begin{proof}[\bf Proof] 
We shall only remark that in this case, 
fiber products of vertices of $x$ over $x_{\emptyset,\emptyset}$ is just 
the wedge products in $\cP(x)$. 
The assertions follow from Lemma~\ref{lemdf:fib dou cubes}.
\end{proof}

\section{Admissible cubes}
\label{sec:adm cubes}

In this section, we review some notations of 
admissible cubes introduced in \cite{Kos}. 
Let $S$ be a finite set and $\cA$ an abelian category.

\begin{df}[\bf Restriction of cubes] 
\label{para:rest of cubes}
Let $U$, $V$ and $W$ be a triple of subsets of $S$ 
such that $U\cap V=\emptyset$ and $U\coprod V\subset W$. 
We define $i_{U,W}^V:\cP(U) \to \cP(W)$ 
to be the functor which sends 
a set 
$A \in \cP(U)$ 
to the disjoint union set $A\coprod V$ of $A$ and $V$. 
Composition with $i^{V}_{U,S}$ induces the natural transformation 
${(i_{U,S}^V)}^{\ast}:\Cub^S \to \Cub^U$. 
(See Definition~\ref{df:pullback cubes}.) 
For any $S$-cube $x$ in a category $\calD$, 
we write $x|_U^V$ for ${(i_{U,S}^V)}^{\ast}x$ 
and call it 
the {\bf restriction of $x$} ({\bf to $U$ along $V$}). 
For any pair of disjoint subsets $U$ and $V$ of $S$ 
and subsets $A\subset U$ and $B\subset V$, 
we have the following commutative diagram.
$${\footnotesize{\xymatrix{
\cP(S\ssm(U\coprod V)) \ar[r]^{i^A_{S\ssm(U\coprod V),S\ssm U}} 
\ar[rd]^{i^{A\coprod B}_{S\ssm(U\coprod V),S}}
\ar[d]_{i^B_{S\ssm(U\coprod V),S\ssm V}} & 
\cP(S\ssm U) \ar[d]^{i^B_{S\ssm U,S}} \\
\cP(S\ssm V) \ar[r]_{i_{S\ssm V,S}^A} & 
\cP(S).
}}}$$
In particular, for any $S$-cube $x$ in $\cC$, 
we have the equalities 
\begin{equation}
\label{equ:restr cube}
(x|_{S\ssm V}^A)|^B_{S\ssm (U\coprod V)}=
x|^{A\coprod B}_{S\ssm(U\coprod V)}=
(x|^B_{S\ssm U})|^A_{S\ssm(U\coprod V)}.
\end{equation}
\end{df}

\begin{ex}[\bf Faces of cubes]
\label{ex:Faces of cubes}
For any $S$-cube $x$ in a category $\calD$ 
and any $k\in S$, 
$x|_{S\ssm\{k\}}^{\{k\}}$ and $x|_{S\ssm\{k\}}^{\emptyset}$ are called 
the {\bf backside $k$-face of $x$} and 
the {\bf frontside $k$-face of $x$} respectively. 
By a {\bf face} of $x$, 
we mean any backside or frontside $k$-face of $x$. 
We write $d^{k,x}:x|_{S\ssm\{k\}}^{\{k\}} \to x|^{\emptyset}_{S\ssm\{k\}}$ for 
the natural transformation from $x|^{\{k\}}_{S\ssm\{k\}}$ to $x|^{\emptyset}_{S\ssm\{k\}}$ 
induced by the boundary morphisms $d^{k,x}_{T\coprod\{k\}}:x_{T\coprod\{k\}} \to x_T$ 
for any $T\in \cP(S\ssm\{k\})$. 
\end{ex}

\begin{ex}
\label{ex:ass cubes subobj} 
Let $\calD$ be a category closed under finite limits and 
$x$ an object in $\calD$ and 
$\fx=\{x_s \onto{d_s^x} x\}_{s\in S}$ a family of morphisms to $x$. 
For any element $s\in S$, we obviously have the canonical isomorphism 
\begin{equation}
\label{equ:face of fib 1}
\Fib (\fx)|_{S\ssm\{s\}}^{\emptyset}\isoto\Fib (\fx_{S\ssm\{s\}}).
\end{equation}
Moreover if all $d_s^x$ are monomorphisms, 
then we also have the canonical isomorphism
\begin{equation}
\label{equ:face of fib 2}
\Fib (\fx)|_{S\ssm\{s\}}^{\{s\}}\isoto\Fib (\fx_{S\ssm\{s\}}\wedge x_s)
\end{equation}
where the symbol $\wedge$ means the meet in $\cP(x)$. 
\end{ex}

\begin{df}[\bf Composition of cubes] 
\label{df:comp of cubes}
Let $x$ and $y$ be $S$-cubes in a category $\calD$, 
$s$ an element in $S$ and 
$\alpha:x|_{S\ssm\{s\}}^{\emptyset} \to y|_{S\ssm\{s\}}^{\{s\}}$ 
a morphism of $S\ssm\{s\}$-cubes. 
We define $x\circ_{s,\alpha} y$ to be an $S$-cube as follows. 
Let $T$ be a subset of $S$ and $t$ an element in $T\ssm\{s\}$. 
${(x\circ_{s,\alpha} y)}_T$ is $x_T$ if $s$ is in $T$, 
and is $y_T$ if $s$ is not in $T$. 
The morphism $d_T^{t,x\circ_{s,\alpha} y}:
{(x\circ_{s,\alpha}y)}_T \to {(x\circ_{s,\alpha}y)}_{T\ssm\{t\}}$ 
is $d_T^{t,x}$ if $s$ is in $T\ssm\{t\}$, 
is $d_T^{t,y}$ if $s$ is not in $T$, 
and we put 
$d_T^{s,x\circ_{s,\alpha}y}:=d_T^{s,x}\alpha d_T^{s,y}$. 
We call $x\circ_{s,\alpha}y$ 
the {\bf composition of $S$-cubes $x$ and $y$} 
({\bf along $s$-direction by $\alpha$}).
\end{df}

\begin{nt}[\bf Total complexes]
\label{nt:tot complex}
Let $S$ be a non-empty finite set such that $\# S=n$ and 
$x$ an $S$-cube in an additive category $\cB$. 
Let us fix a bijection $\alpha$ from $S$ to $(n]$ 
and we will identify $S$ with the set $(n]$ via $\alpha$. 
We associate an $S$-cube $x$ with 
{\bf total complex} $\Tot_{\alpha} x=\Tot x$ 
as follows. 
$\Tot x$ is a chain complex in $\cB$ concentrated in 
degrees $0,\ldots,n$ whose component at degree $k$ is given by
$$\displaystyle{{(\Tot x)}_k:=\underset{\substack{T\in\cP(S) \\ 
\# T=k}}{\bigoplus} x_T}$$ 
and whose boundary morphism 
$d_k^{\Tot x}:{(\Tot x)}_k \to {(\Tot x)}_{k-1}$ 
are defined by 
$$(-1)^{\overset{n}{\underset{t=j+1}{\sum}}\chi_T(t)}d_T^j:x_T 
\to x_{T\ssm\{j\}}$$
on its $x_T$ component to $x_{T\ssm\{j\}}$ component. 
Here $\chi_T$ is the characteristic function of $T$. 
(See {\bf Example}~\ref{ex:power set}.) 
\end{nt}

\begin{ex}[\bf Mapping cone]
\label{ex:mapping cone}
Let $\cB$ be an additive category. 
For a chain morphism between chain complexes $f:a\to b$ in $\cB$, 
we denote the {\bf canonical mapping cone of $f$} by $\Cone f$. 
Namely $\Cone f$ is a complex in $\cB$ and whose component at degree $n$ is given by 
${(\Cone f)}_n=a_{n-1}\oplus b_n $ 
and whose boundary morphism 
$d_n^{\Cone f}:{(\Cone f)}_n \to {(\Cone f)}_{n-1}$ 
are defined by 
$\displaystyle{d_n^{\Cone f}=
\begin{pmatrix}
-d_{n-1}^a & 
0\\ 
-f_{n-1} & 
d_n^b 
\end{pmatrix}}$. 
Let $x$ be an $S$-cube in $\cB$. 
Then for any $s\in S$, we have the canonical isomorphism 
\begin{equation}
\label{eq:cone isom}
\Cone(\Tot d^{s,x}:\Tot(x|^{\{s\}}_{S\ssm\{s\}}) \to 
\Tot(x|^{\emptyset}_{S\ssm\{s\}}))\isoto \Tot x
\end{equation}
In particular, 
if $\cB$ is an abelian category, 
then we have the long exact sequence
\begin{multline}
\label{eq:long ex seq}
\cdots \to \Homo_{p+1}\Tot x \to \Homo_p \Tot (x|^{\{s\}}_{S\ssm\{s\}}) \to 
\Homo_p\Tot(x|^{\emptyset}_{S\ssm\{s\}}) \to\\ 
\Homo_p\Tot x \to 
\Homo_{p-1}\Tot (x|^{\{s\}}_{S\ssm\{s\}})\to 
\Homo_{p-1}\Tot (x|^{\emptyset}_{S\ssm\{s\}}) \to \cdots .
\end{multline}
\end{ex}

\begin{df}[\bf Spherical complexes, spherical cubes]
\label{df:spherical complex}
Let $n$ be an integer. 
We say that a complex $y$ 
in an abelian category $\cA$ is {\bf $n$-spherical} if 
$\Homo_k(y)=0$ for any $k\neq n$. 
We say that an $S$-cube $x$ in an abelian category $\cA$ 
is {\bf $n$-spherical} if the complex $\Tot x$ is $n$-spherical. 
\end{df}

\sn 
By the long exact sequence $\mathrm{(\ref{eq:long ex seq})}$ 
in Example~\ref{ex:mapping cone}, 
we can easily get the following result.

\begin{lem}[\bf Homology groups of Total complexes]
\label{lem:Cal of Tot}
Let $x$ be an $S$-cube in an abelian category $\cA$ and 
let us assume that $x|_{S\ssm\{s\}}^{\emptyset}$ is $0$-spherical, 
then we have the canonical isomorphisms 
\begin{equation}
\Homo_p\Tot x\isoto 
\begin{cases}
\coker\Homo_0\Tot d^{s,x} & \text{{\rm{if $p=0$}}}\\
\Ker\Homo_0\Tot d^{s,x} & \text{{\rm{if $p=1$}}}\\
\Homo_{p-1}\Tot(x|^{\{s\}}_{S\ssm\{s\}}) & \text{{\rm{if $p\geq 2$}}}.
\end{cases}
\end{equation} 
Here $d^{s,x}$ is as in Example~\ref{ex:Faces of cubes}. 
\qed
\end{lem}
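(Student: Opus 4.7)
The plan is to deduce the three claimed isomorphisms directly from the long exact sequence $\mathrm{(\ref{eq:long ex seq})}$ of Example~\ref{ex:mapping cone} by inputting the hypothesis that $x|_{S\ssm\{s\}}^{\emptyset}$ is $0$-spherical. Concretely, the hypothesis says $\Homo_p\Tot(x|_{S\ssm\{s\}}^{\emptyset})=0$ for every $p\neq 0$, so the long exact sequence acquires a large number of zero entries which force the connecting maps and neighboring maps to become isomorphisms, monomorphisms, or epimorphisms as appropriate.

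First I would handle the case $p\geq 2$. In the relevant portion of the long exact sequence the two terms $\Homo_p\Tot(x|_{S\ssm\{s\}}^{\emptyset})$ and $\Homo_{p-1}\Tot(x|_{S\ssm\{s\}}^{\emptyset})$ both vanish (since $p\geq 2$ implies $p-1\geq 1$), so the fragment reads $0\to\Homo_p\Tot x\to\Homo_{p-1}\Tot(x|^{\{s\}}_{S\ssm\{s\}})\to 0$, yielding the stated isomorphism. Next I would treat $p=1$: only the term $\Homo_1\Tot(x|_{S\ssm\{s\}}^{\emptyset})$ vanishes, and the four-term sequence
\[
0\to\Homo_1\Tot x\to\Homo_0\Tot(x|^{\{s\}}_{S\ssm\{s\}})
\xrightarrow{\Homo_0\Tot d^{s,x}}\Homo_0\Tot(x|_{S\ssm\{s\}}^{\emptyset})\to\Homo_0\Tot x\to 0
\]
identifies $\Homo_1\Tot x$ with $\Ker\Homo_0\Tot d^{s,x}$ and, reading the right-hand end, identifies $\Homo_0\Tot x$ with $\coker\Homo_0\Tot d^{s,x}$. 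This settles all three cases.

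Since every step is a formal extraction from an already established long exact sequence plus a vanishing hypothesis, there is no genuine obstacle; the only care required is bookkeeping in the indexing of the long exact sequence and checking that the connecting map indicated is indeed $\Homo_0\Tot d^{s,x}$, which follows from the definition of the mapping cone presentation in Example~\ref{ex:mapping cone}.
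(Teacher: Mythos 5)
Your proposal is correct and follows exactly the route the paper itself indicates: the lemma is stated with \qed because, as the sentence preceding it says, it is obtained by feeding the vanishing hypothesis into the long exact sequence $\mathrm{(\ref{eq:long ex seq})}$ of Example~\ref{ex:mapping cone}, which is precisely your argument. Your case bookkeeping for $p\geq 2$, $p=1$ and $p=0$ is accurate, so nothing further is needed.
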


\begin{ex}
\label{ex:H_0Totds}
Let $x$ be an $S$-cube in $\cA$ such that 
all boundary morphisms are monomorphisms. 
Then for any $s\in S$, we have the isomorphism
\begin{equation}
\label{equ:H0Totds}
\Ker(\Homo_0\Tot d^{s,x}:\Homo_0\Tot x|_{S\ssm\{s\}}^{\{s\}} 
\to \Homo_0\Tot x|_{S\ssm\{s\}}^{\emptyset})
\isoto 
\frac{\left (\underset{t\in S\ssm\{s\}}{\bigvee} 
\im d_{\{t\}}^{t,x}\right )\wedge\im d_{\{s\}}^{s,x}}
{\underset{t\in S\ssm\{s\}}{\bigvee} 
(\im d_{\{t\}}^{t,x}\wedge \im d_{\{s\}}^{t,x})}
\end{equation}
where the symbols $\vee$ and $\wedge$ are 
the join and the meet in $\cP(x_{\emptyset})$ respectively. 
Therefore the morphism $\Homo_0\Tot d^{s,x}$ 
is a monomorphism if and only if 
a pair of (a family of) subobjects 
$(\{\im d_{\{t\}}^{t,x}\}_{t\in S\ssm\{s\}},\im d^{s,x}_{\{s\}})$ 
is distributive in $\cP(x_{\emptyset})$. 
\end{ex}

\begin{ex}
\label{ex:sphericality of comp}
Let $x$ and $y$ be $S$-cubes in an additive category 
$\cB$, 
$s$ an element in $S$ 
and 
$\alpha:x|_{S\ssm\{s\}}^{\emptyset} 
\isoto y|^{\{s\}}_{S\ssm\{s\}}$ 
an isomorphism of $S$-cubes. 
Then we have a sequence of complexes of $S\ssm\{s\}$-cubes.
$$
\begin{bmatrix}
\xymatrix{
x|^{\{s\}}_{S\ssm\{s\}} \ar[d]^{d^{s,x}}\\
x|^{\emptyset}_{S\ssm\{s\}}
}
\end{bmatrix}
\begin{matrix}
\xymatrix{
\overset{\scriptstyle{\id}}{\to} \ar@{}[d]\\
\underset{\scriptstyle{d^{s,y}\alpha}}{\to}\\
}
\end{matrix}
\begin{bmatrix}
\xymatrix{
x|^{\{s\}}_{S\ssm\{s\}} \ar[d]^{d^{s,y}\alpha d^{s,x}} \\
y|^{\emptyset}_{S\ssm\{s\}}
}
\end{bmatrix}
\begin{matrix}
\xymatrix{
\overset{\scriptstyle{\alpha d^{s,x}}}{\to} \ar@{}[d]\\
\underset{\scriptstyle{\id}}{\to}
}
\end{matrix}
\begin{bmatrix}
\xymatrix{
y|^{\{s\}}_{S\ssm\{s\}} \ar[d]^{d^{s,y}}\\
y|^{\emptyset}_{S\ssm\{s\}}
}
\end{bmatrix}.
$$
We regard the sequence above as 
\begin{equation}
\label{equ:dist tri}
x \to x\circ_{s,\alpha} y \to y
\end{equation}
and it is a distinguished triangle 
in the triangulated category 
of the homotopy category of 
chain complexes of $S\ssm\{s\}$-cubes 
by the octahedron axiom. 
In particular if $\cB$ is an abelian category, we have 
a long exact sequence
\begin{multline}
\label{eq:long ex seq 2}
\cdots \to \Homo_{p+1}\Tot(y) \to \Homo_p \Tot (x) \to 
\Homo_p\Tot(x\circ_{s,\alpha} y) \to\\ 
\Homo_p\Tot(y) \to 
\Homo_{p-1}\Tot (x)\to 
\Homo_{p-1}\Tot (x\circ_{s,\alpha} y) \to \cdots .
\end{multline} 
In particular, 
let $n$ be a non-negative integer and 
if $x$ and $y$ are $n$-spherical, 
then $x\circ_{s,\alpha} y$ is also $n$-spherical.  
\end{ex}

\begin{df}[\bf Homology of cubes]
\label{df:homology of cubes}
Let us fix an $S$-cube $x$ 
in $\cA$. 
For each $k \in S$, 
the {\bf $k$-direction $0$-th homology} of $x$ 
is the $S\ssm\{k\}$-cube $\Homo_0^k(x)$ in $\cA$ 
defined by $\Homo_0^k(x)_T:=\coker d_{T\cup\{k\}}^k$. 
For any $T\in \cP(S)$ and $k\in S\ssm T$, 
we denote 
the canonical projection morphism 
$x_T \rdef \Homo_0^k(x)_T$ 
by $\pi^{k,x}_T$ or simply $\pi^k_T$. 
\end{df}

\begin{ex}[\bf Motivational example]
\label{ex:mot ex}
Let $\ff_S=\{f_s\}_{s\in S}$ be a family of elements in $A$. 
The {\bf typical cube} associated 
with $\ff_S=\{f_s\}_{s\in S}$ 
is an $S$-cube in 
the category of $A$-modules 
denoted by
$\Typ_A(\ff_S)$ 
and defined by 
$\Typ_A(\ff_S)_T=A$ and $d_T^{\Typ_A(\ff_S),t}=f_t$ for any 
$T\in \cP(S)$ and $t\in T$. 
The complex $\Tot \Typ_A(\ff_S)$ is the usual Koszul complex associated with 
a family $\ff_S$. 
If the family $\ff_S$ forms a regular sequence 
with respect to every ordering of the 
members of $\ff_S$, 
then for any $k \in (\# S]$ and any 
distinct elements $s_1,\cdots,s_k$ in $S$, 
boundary maps of 
$\Homo_0^{s_1}(\cdots(\Homo_0^{s_k}(\Typ_A(\ff_S)))\cdots)$ 
are injections. 
\end{ex}

\begin{df}[\bf Admissible cubes]
\label{para:admcube}
Let us fix an $S$-cube $x$ 
in $\cA$. 
When $\# S=1$, 
we say that $x$ is {\bf admissible} if $x$ is monic, 
namely if its unique boundary morphism is a monomorphism. 
For $\# S>1$, 
we define the notion of an admissible cube inductively 
by saying that 
$x$ is {\bf admissible} if $x$ is monic 
and if for every $k$ in $S$, 
$\Homo^k_0(x)$ is admissible. 
If $x$ is admissible, 
then for any distinct elements 
$i_1,\ldots,i_k$ in $S$ 
and for any automorphism $\sigma$ of $S$, 
the identity morphism on $x|^{\emptyset}_{S\ssm\{i_1,\ldots,i_k\}}$ 
induces an isomorphism
\begin{equation}
\label{equ:ordering}
\Homo^{i_1}_0(\Homo^{i_2}_0(\cdots(\Homo_0^{i_k}(x))\cdots))\isoto 
\Homo^{i_{\sigma(1)}}_0(\Homo^{i_{\sigma(2)}}_0(\cdots(
\Homo_0^{i_{\sigma(k)}}(x))\cdots))
\end{equation}
(\cf \cite[3.11]{Kos}). 
For an admissible $S$-cube $x$ and 
a subset $T=\{i_1,\ldots,i_k\}\subset S$, 
we set 
$\Homo^T_0(x):=\Homo^{i_1}_0(\Homo^{i_2}_0(\cdots(\Homo^{i_k}_0(x))\cdots))$ 
and $\Homo^{\emptyset}_0(x)=x$. 
By virtue of the isomorphism $\mathrm{(\ref{equ:ordering})}$, 
the definition of $\Homo_0^T(x)$ does not depend upon 
an ordering of the sequence $i_1,\ldots,i_k$, 
up to isomorphisms. 
Notice that $\Homo^T_0(x)$ is an $S\ssm T$-cube for any $T\in\cP(S)$. 
We have the isomorphisms
\begin{equation}
\label{equ:Totisom}
\Homo_p(\Tot(x))\isoto 
\begin{cases}
\Homo^S_0(x) & \text{for $p=0$}\\
0 & \text{otherwise}.
\end{cases}
\end{equation}
(See \cite[3.13]{Kos}.) 
We denote the full subcategory of $\Cub^S(\cA)$ consisting of 
those admissible cubes by 
$\Cub_{\adm}^S(\cA)$. 
The category $\Cub_{\adm}^S(\cA)$ is 
closed under extensions in $\Cub^S(\cA)$ 
by \cite[3.20]{Kos} and therefore it naturally becomes an exact category. 
\end{df}

\begin{ex}[\bf Admissible squares]
\label{ex:adm squ}
Let $x$ be a $\{1,2\}$-cube in an abelian category such that 
the boundary morphisms $d_{\{1\}}^{1,x}$ and 
$d_{\{2\}}^{2,x}$ are monomorphisms. 
Then we can easily prove that the following conditions are equivalent.\\
$\mathrm{(1)}$ 
The cube $x$ is $0$-spherical.\\
$\mathrm{(2)}$ 
The cube $x$ is admissible.\\
$\mathrm{(3)}$ 
The diagram
$$
{\footnotesize{\xymatrix{
x_{\{1,2\}} \ar[r]^{d^{2,x}_{\{1,2\}}} \ar[d]_{d^{1,x}_{\{1,2\}}} & 
x_{\{1\}} \ar[d]^{d_{\{1\}}^{1,x}}\\
x_{\{2\}} \ar[r]_{d^{2,x}_{\{2\}}} & 
x_{\emptyset}
}}}$$
is a Cartesian square.
\end{ex}

\begin{thm}[\bf Characterization of admissibility]
\label{thm:char of adm}
{\rm (cf. \cite[3.15]{Kos}).} 
Let $x$ be an $S$-cube in an abelian category $\cA$. 
Then the following conditions are equivalent.\\
$\mathrm{(1)}$ 
The $S$-cube $x$ is admissible.\\
$\mathrm{(2)}$ 
All faces of the $S$-cube $x$ are admissible 
and the $S$-cube $x$ is $0$-spherical.\\
$\mathrm{(3)}$ 
All frontside faces of the $S$-cube $x$ are admissible 
and the $S$-cube $x$ is $0$-spherical.\\
$\mathrm{(4)}$ 
The cube $x|_T^{\emptyset}$ is admissible for any $T\in \cP(S)$.
\end{thm}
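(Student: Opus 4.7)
I would proceed by induction on $\# S$. When $\# S \leq 1$ all four conditions collapse to the statement that the unique boundary of $x$ (if any) is a monomorphism, so the base case is clear. In the inductive step, $(4) \Rightarrow (1)$ is immediate by taking $T = S$, while $(2) \Rightarrow (3)$ is tautological; hence the work splits into $(1) \Rightarrow (2)$ and the substantive step $(3) \Rightarrow (4)$.

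For $(1) \Rightarrow (2)$, the $0$-sphericality of $x$ is the isomorphism $\mathrm{(\ref{equ:Totisom})}$. Admissibility of the faces follows from the natural identification
\[
\Homo_0^j(x|_{S\ssm\{k\}}^V) = \Homo_0^j(x)|_{S\ssm\{j,k\}}^V \qquad (V \in \{\emptyset,\{k\}\}),
\]
together with the inductive hypothesis: since $\Homo_0^j(x)$ is admissible by the definition of $(1)$, the inductive $(1) \Rightarrow (2)$ applied to it yields admissibility of its own frontside and backside faces, which is exactly what is required.

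For $(3) \Rightarrow (4)$ my plan is to first upgrade $(3)$ to $(1)$ and then iterate. Under $(3)$, I would first show that every backside face $x|_{S\ssm\{i\}}^{\{i\}}$ is admissible. Its $0$-sphericality is read off from Lemma~\ref{lem:Cal of Tot} (or directly from the long exact sequence $\mathrm{(\ref{eq:long ex seq})}$) once one feeds in the $0$-sphericality of $x$ and of the frontside $i$-face. Its own frontside $j$-faces $x|_{S\ssm\{i,j\}}^{\{i\}}$ appear as backside $i$-faces of the admissible cube $x|_{S\ssm\{j\}}^{\emptyset}$, hence are admissible by the inductive $(1) \Rightarrow (2)$. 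The backside $i$-face therefore satisfies $(3)$ and the inductive $(3) \Rightarrow (1)$ makes it admissible. At this point every boundary of $x$ lies in some face and hence is a monomorphism, so $x$ is monic. I would then invoke the objectwise short exact sequence of $(S\ssm\{k\})$-cubes
\[
x|_{S\ssm\{k\}}^{\{k\}} \rinf x|_{S\ssm\{k\}}^{\emptyset} \rdef \Homo_0^k(x),
\]
apply $\Tot$, and read off from the associated long exact sequence that $\Homo_0^k(x)$ is $0$-spherical (using the $0$-sphericality of the two left-hand cubes already established, and the injectivity of $\Homo_0\Tot d^{k,x}$ coming from Lemma~\ref{lem:Cal of Tot}). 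Its frontside faces coincide with $\Homo_0^k(x|_{S\ssm\{j\}}^{\emptyset})$, which are admissible because $x|_{S\ssm\{j\}}^{\emptyset}$ is admissible by $(3)$. Hence $\Homo_0^k(x)$ satisfies $(3)$ and is admissible by the inductive hypothesis; combined with monicity of $x$ this proves $(1)$. Finally, iterating $(1) \Rightarrow (2)$ to peel off directions from $x$ yields $(4)$.

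The main obstacle will be orchestrating the two interlocked uses of the inductive hypothesis inside $(3) \Rightarrow (1)$: one must propagate $0$-sphericality twice, first from $x$ to its backside faces and then from the pair of faces to $\Homo_0^k(x)$, while ensuring that each appeal to the inductive $(1) \Rightarrow (2)$ or $(3) \Rightarrow (1)$ is for a sub-cube with strictly smaller index set. Getting this ordering right, so that no cube is admitted before its admissibility has actually been proved, is the delicate combinatorial point of the argument.
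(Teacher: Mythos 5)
Your proposal is correct, but it is organized rather differently from the paper's proof, and it proves more from scratch. The paper arranges the equivalences around condition $\mathrm{(2)}$: it imports the equivalence $\mathrm{(1)}\Leftrightarrow\mathrm{(2)}$ wholesale from \cite[3.15]{Kos}, proves $\mathrm{(3)}\Rightarrow\mathrm{(2)}$ by the same induction you use in your first step (the restriction identity $(x|^{\{t\}}_{S\ssm\{t\}})|^{\emptyset}_{S\ssm\{s,t\}}=(x|^{\emptyset}_{S\ssm\{s\}})|^{\{t\}}_{S\ssm\{s,t\}}$ plus Lemma~\ref{lem:Cal of Tot} to show each backside face satisfies $\mathrm{(3)}$), and then disposes of $\mathrm{(3)}\Leftrightarrow\mathrm{(4)}$ by a separate induction via Example~\ref{ex:adm squ}. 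You instead close a single cycle $\mathrm{(1)}\Rightarrow\mathrm{(2)}\Rightarrow\mathrm{(3)}\Rightarrow\mathrm{(4)}\Rightarrow\mathrm{(1)}$, and the extra work you do inside $\mathrm{(3)}\Rightarrow\mathrm{(1)}$ --- the objectwise short exact sequence $x|^{\{k\}}_{S\ssm\{k\}}\rinf x|^{\emptyset}_{S\ssm\{k\}}\rdef \Homo_0^k(x)$, the resulting long exact sequence showing $\Homo_0^k(x)$ is $0$-spherical, and the identification of its frontside faces with $\Homo_0^k(x|^{\emptyset}_{S\ssm\{j\}})$ --- together with your direct inductive argument for $\mathrm{(1)}\Rightarrow\mathrm{(2)}$ via $\Homo_0^j(x|^V_{S\ssm\{k\}})=\Homo_0^j(x)|^V_{S\ssm\{j,k\}}$, amounts to re-proving the cited result \cite[3.15]{Kos} rather than invoking it (you still lean on the cited isomorphism $\mathrm{(\ref{equ:Totisom})}$ for $0$-sphericality of admissible cubes). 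What your route buys is self-containedness and a proof that verifies the recursive definition of admissibility directly; what the paper's route buys is brevity, at the cost of a black-box citation. Your concern about ordering the inductive appeals is legitimate but resolves cleanly: every invocation of the inductive hypothesis in your argument is applied to a cube indexed by a set of strictly smaller cardinality ($\Homo_0^j(x)$, a face of $x$, or $\Homo_0^k(x)$), so the induction is well-founded as written.
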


\begin{proof}[\bf Proof]
The equivalence of conditions $\mathrm{(1)}$ and 
$\mathrm{(2)}$ is proven in \cite[3.15]{Kos}. 
Obviously condition $\mathrm{(2)}$ implies condition $\mathrm{(3)}$. 
We prove the converse implication by indcution on the cardinality of $S$. 
If $\# S\leq 1$, the assertion is trivial.  
If $\#S=2$, 
the assertion follows from Example~\ref{ex:adm squ}. 
Next let us assume that $\# S \geq 3$. 
Then for any distinct elements $s$ and $t\in S$, 
we have the equality
$$(x|^{\{t\}}_{S\ssm\{t\}})|^{\emptyset}_{S\ssm\{s,t\}}=
(x|^{\emptyset}_{S\ssm\{s\}})|^{\{t\}}_{S\ssm\{s,t\}}$$
by the equality $\mathrm{(\ref{equ:restr cube})}$ in \ref{para:rest of cubes} 
and $(x|^{\emptyset}_{S\ssm\{s\}})|^{\{t\}}_{S\ssm\{s,t\}}$ 
is admissible 
by the assumption and the equivalence of 
conditions $\mathrm{(1)}$ and $\mathrm{(2)}$. 
Therefore all frontside faces of $x|_{S\ssm\{t\}}^{\{t\}}$ are admissible. 
On the other hand, 
since $x$ and $x|_{S\ssm\{t\}}^{\emptyset}$ are $0$-spherical, 
$x|_{S\ssm\{t\}}^{\{t\}}$ is also $0$-spherical 
by Lemma~\ref{lem:Cal of Tot}. 
Hence $x|_{S\ssm\{t\}}^{\{t\}}$ is 
admissble for any $t\in S$ by inductive hypothesis. 
The equivalence of conditions $\mathrm{(3)}$ 
and $\mathrm{(4)}$ follows 
from Example~\ref{ex:adm squ} and 
induction on the cardinality of $S$.
\end{proof}

\begin{cor}
\label{cor:comp of cubes}
Let $x$ and $y$ be $S$-cubes in an abelian category $\cA$, 
$S$ an element in $S$ and 
$\alpha:x|_{S\ssm\{s\}}^{\emptyset}\isoto y|_{S\ssm\{s\}}^{\{s\}}$ an isomorphism 
of $S\ssm\{s\}$-cubes. 
If $x$ and $y$ are admissible, 
then $x\circ_{s,\alpha} y$ is also admissible. 
\end{cor}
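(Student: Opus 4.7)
The plan is to prove by induction on $n := \# S$ that $z := x \circ_{s,\alpha} y$ is admissible, using Theorem~\ref{thm:char of adm}(3) as the engine of the inductive step. For the base case $n = 1$, write $S = \{s\}$; then $z$ has a single boundary morphism $z_{\{s\}} = x_{\{s\}} \to z_\emptyset = y_\emptyset$, which according to Definition~\ref{df:comp of cubes} factors as the composite $x_{\{s\}} \to x_\emptyset \to y_{\{s\}} \to y_\emptyset$ whose outer arrows are the boundary morphisms of $x$ and $y$ and whose middle arrow is $\alpha_\emptyset$. The outer arrows are monic by admissibility of $x$ and $y$ and the middle arrow is an isomorphism, so the composite is monic and $z$ is admissible.

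For the inductive step, assume the statement when $\# S < n$ and take $\# S = n \geq 2$. By Theorem~\ref{thm:char of adm}(3) it suffices to establish that (a) $z$ is $0$-spherical, and (b) every frontside face of $z$ is admissible. For (a), note that $x$ and $y$ are themselves $0$-spherical by admissibility together with~(\ref{equ:Totisom}); the distinguished triangle~(\ref{equ:dist tri}) and its long exact sequence~(\ref{eq:long ex seq 2}) then give the $0$-sphericality of $z$, exactly as in the closing sentence of Example~\ref{ex:sphericality of comp}.

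For (b) two subcases arise. The $s$-frontside face $z|^{\emptyset}_{S\ssm\{s\}}$ coincides with $y|^{\emptyset}_{S\ssm\{s\}}$, since no subset $T \subset S\ssm\{s\}$ contains $s$, and is therefore admissible as a face of the admissible cube $y$ (Theorem~\ref{thm:char of adm}). For $k \in S\ssm\{s\}$, a direct unwinding of Definition~\ref{df:comp of cubes} yields the identity
\[
z|^{\emptyset}_{S\ssm\{k\}} \;=\; (x|^{\emptyset}_{S\ssm\{k\}}) \circ_{s,\alpha'} (y|^{\emptyset}_{S\ssm\{k\}}),
\]
where $\alpha'$ is the restriction of $\alpha$ to $(S\ssm\{k,s\})$-cubes and is again an isomorphism; the two factors are admissible as frontside faces of admissible cubes, so the inductive hypothesis shows the composition is admissible. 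Combining (a) and (b) through Theorem~\ref{thm:char of adm}(3) closes the induction. The main technical obstacle is precisely this last identification: Definition~\ref{df:comp of cubes} sorts vertices according to whether their index contains $s$, and one must verify that this sorting commutes with the restriction functor ${(i^{\emptyset}_{S\ssm\{k\},S})}^{\ast}$ — a short but careful bookkeeping check.
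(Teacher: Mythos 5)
Your proof is correct and follows essentially the same route as the paper's: induction on $\# S$, verifying condition (3) of Theorem~\ref{thm:char of adm}, with $0$-sphericality supplied by Example~\ref{ex:sphericality of comp} and the frontside faces identified via the restriction identity $(x\circ_{s,\alpha}y)|_T^{\emptyset}=x|_T^{\emptyset}\circ_{s,\alpha|_T}y|_T^{\emptyset}$ for $s\in T$ (and $=y|_T^{\emptyset}$ otherwise), which is exactly the formula the paper records. The only cosmetic difference is that the paper states this identity for arbitrary $T\in\cP(S)$ rather than only for the faces $T=S\ssm\{k\}$.
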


\begin{proof}[\bf Proof]
We proceed by induction on the cardinality of $S$. 
If $\# S\leq 1$, the assertion is trivial. 
We will check the condition $\mathrm{(3)}$ in Theorem~\ref{thm:char of adm}. 
Since $x$ and $y$ are $0$-spherical, 
$x\circ_{s,\alpha} y$ is also $0$-spherical by Example~\ref{ex:sphericality of comp}. 
Let us assume $\# S\geq 2$. 
We can easily check that 
we have the equality 
$${(x\circ_{s,\alpha}y)}|_T^{\emptyset}=
\begin{cases}
y|_T^{\emptyset} & \text{if $s\notin T$}\\
x|_T^{\emptyset}\circ_{s,\alpha|_T} y|_T^{\emptyset} & \text{if $s\in T$}
\end{cases}$$
for any $T\in\cP(S)$ where 
$\alpha|_T:(x|_T^{\emptyset})|_{T\ssm\{s\}}^{\emptyset} \to 
(y|_T^{\emptyset})|_{T\ssm\{s\}}^{\{s\}}$ 
is a restriction of $\alpha$. 
Therefore $(x\circ_{s,\alpha}y)|_T^{\emptyset}$ is admissible by 
the assumption and the inductive hypothesis. 
Hence $x\circ_{s,\alpha} y$ is admissible. 
\end{proof}

\begin{cor} 
\label{cor:adm cube is fib}
Any admissible $S$-cube in an abelian category $\cA$ is fibered.
\end{cor}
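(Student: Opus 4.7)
The plan is to reduce to the Cartesian square criterion of Lemma~\ref{lem:char of fibered S-cube}. First I dispose of the trivial case $\# S\leq 1$: here the canonical morphism $C(x)\colon x\to \Fib\fU x$ is an identity (each vertex involves a one-term fiber product), so $x$ is automatically fibered.

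For $\# S\geq 2$, Lemma~\ref{lem:char of fibered S-cube} tells us that $x$ is fibered if and only if, for every pair of distinct elements $s,t\in S$ and every subset $T\subset S\ssm\{s,t\}$, the square
$$\xymatrix{
x_{T\coprod\{s,t\}} \ar[r]^{d^{t}_{T\coprod\{s,t\}}} \ar[d]_{d^{s}_{T\coprod\{s,t\}}} & x_{T\coprod\{s\}} \ar[d]^{d^{s}_{T\coprod\{s\}}}\\
x_{T\coprod\{t\}} \ar[r]_{d^{t}_{T\coprod\{t\}}} & x_T
}$$
is Cartesian in $\cA$. This square is precisely the $\{s,t\}$-cube $x|_{\{s,t\}}^T$. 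So the problem is to show that every such 2-dimensional sub-cube of $x$ is admissible, and then invoke Example~\ref{ex:adm squ}.

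The key step is therefore to establish admissibility of $x|_{\{s,t\}}^T$. I would do this by induction on $\# S-2$. If $\# S=2$, then $T=\emptyset$ and $x|_{\{s,t\}}^{\emptyset}=x$ is admissible by hypothesis. For the inductive step, pick any element $k\in S\ssm\{s,t\}$. If $k\in T$ write $x|_{\{s,t\}}^T=(x|_{S\ssm\{k\}}^{\{k\}})|_{\{s,t\}}^{T\ssm\{k\}}$ via the identity~\eqref{equ:restr cube}; if $k\notin T$ write $x|_{\{s,t\}}^T=(x|_{S\ssm\{k\}}^{\emptyset})|_{\{s,t\}}^{T}$ instead. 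In either case $x|_{S\ssm\{k\}}^{\{k\}}$ (resp.\ $x|_{S\ssm\{k\}}^{\emptyset}$) is a face of the admissible cube $x$, hence is itself admissible by the implication (1)$\Rightarrow$(2) of Theorem~\ref{thm:char of adm}, and the inductive hypothesis applied to this $(S\ssm\{k\})$-cube yields admissibility of $x|_{\{s,t\}}^T$.

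Once $x|_{\{s,t\}}^T$ is known to be an admissible $\{s,t\}$-cube, its two outer boundary morphisms $d_{\{s\}}^{s}$ and $d_{\{t\}}^{t}$ are monomorphisms by Definition~\ref{para:admcube}, and Example~\ref{ex:adm squ} gives precisely that the associated square is Cartesian. Applying Lemma~\ref{lem:char of fibered S-cube} finishes the proof. I do not anticipate a real obstacle here; the main point is purely bookkeeping, namely to keep track of the successive face operations and apply~\eqref{equ:restr cube} so that each intermediate cube is still recognized as a face of an admissible cube.
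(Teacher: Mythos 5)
Your proposal is correct and follows essentially the same route as the paper, whose proof simply cites Lemma~\ref{lem:char of fibered S-cube}, Example~\ref{ex:adm squ} and Theorem~\ref{thm:char of adm}; you have merely made explicit the induction (via the restriction identity~(\ref{equ:restr cube}) and the fact that faces of admissible cubes are admissible) that reduces each $2$-dimensional sub-cube $x|_{\{s,t\}}^{T}$ to an admissible square.
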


\begin{proof}[\bf Proof] 
The assertion follows from Lemma~\ref{lem:char of fibered S-cube} and 
Example~\ref{ex:adm squ} and Theorem~\ref{thm:char of adm}. 
\end{proof}

\begin{cor}
\label{cor:adm attachment of cubes}
Let $x$ be an admissible $S$-cube and $f:x_{\emptyset} \to y$ 
a monomorphism in $\cA$. 
Then $x_{f,y}$ is also admissible.
\end{cor}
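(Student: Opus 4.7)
The plan is to proceed by induction on $n := \# S$, working directly from the inductive definition of admissibility.

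The base case $n=1$ is immediate: the unique boundary morphism of $x_{f,y}$ is $f\circ d^{s,x}_{\{s\}}$, a composition of two monomorphisms.

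For the inductive step with $n \geq 2$, I would first verify that every boundary morphism of $x_{f,y}$ is a monomorphism. Those of the form $d_T^{t,x_{f,y}}$ with $T\ssm\{t\}\neq\emptyset$ coincide with $d_T^{t,x}$ and are monomorphisms by admissibility of $x$; the remaining ones take the form $f\circ d_{\{t\}}^{t,x}$ and are compositions of monomorphisms. The main content is showing that for each $k\in S$ the $(S\ssm\{k\})$-cube $\Homo_0^k(x_{f,y})$ is admissible. For this I would establish the identification
$$\Homo_0^k(x_{f,y}) \;=\; (\Homo_0^k(x))_{\bar f,\,w},$$
where $w:=y/\im(f\circ d^{k,x}_{\{k\}})$ and $\bar f\colon \Homo_0^k(x)_\emptyset\to w$ is the morphism induced by $f$. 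Before invoking this identification I would check that $\bar f$ is itself a monomorphism, which follows from a snake-lemma comparison of the short exact sequence $\im d_{\{k\}}^{k,x}\rinf x_\emptyset \rdef \Homo_0^k(x)_\emptyset$ with its image under $f$: because $f$ is a monomorphism, the induced map $\im d_{\{k\}}^{k,x}\to \im(f\circ d_{\{k\}}^{k,x})$ is an isomorphism, so the snake lemma forces $\ker\bar f=0$. The cube identification itself is a vertex-and-boundary check: for $T\subset S\ssm\{k\}$ with $T\neq\emptyset$ both sides equal $\coker d_{T\cup\{k\}}^{k,x}$; at $T=\emptyset$ both equal $w$; and the new boundary morphisms landing at the vertex $\emptyset$ factor through $\bar f$ exactly as prescribed by the definition of attachment.

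Granted this identification, the admissibility of $\Homo_0^k(x)$ --- which holds by definition since $x$ is admissible --- together with the inductive hypothesis applied to the $(S\ssm\{k\})$-cube $\Homo_0^k(x)$ and the monomorphism $\bar f$ yields that $(\Homo_0^k(x))_{\bar f,w}$ is admissible, hence so is $\Homo_0^k(x_{f,y})$. The main obstacle is the vertex-and-boundary identification above, which is routine but requires care precisely at the empty-subset vertex where the attachment modifies the cube; once that is in place, the induction closes cleanly.
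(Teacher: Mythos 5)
Your proof is correct, but it takes a genuinely different route from the paper's. The paper verifies condition $\mathrm{(3)}$ of Theorem~\ref{thm:char of adm} (all frontside faces admissible plus $0$-sphericality): the key observations there are the equality $x_{f,y}|^{\emptyset}_{S\ssm\{s\}}={(x|_{S\ssm\{s\}}^{\emptyset})}_{f,y}$, which feeds the induction on frontside faces, and the fact that $\Homo_k\Tot x_{f,y}=\Homo_k\Tot x$ for $k>0$ (the two total complexes differ only in degree $0$, and $f$ being monic preserves $\Ker d_1$), plus Lemma~\ref{lem:attach two} to seed the $\# S=2$ case via Example~\ref{ex:adm squ}. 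You instead unwind the recursive definition of admissibility directly, and your key identification is that attachment commutes with the $k$-direction $0$-th homology, $\Homo_0^k(x_{f,y})=(\Homo_0^k(x))_{\bar f,w}$, at the cost of the extra (snake-lemma) verification that the induced map $\bar f$ is again a monomorphism. What your approach buys is independence from the characterization theorem \ref{thm:char of adm} and from any statement about $\Tot$; what the paper's approach buys is that the only new morphism ever considered is $f$ itself, so no induced-monomorphism check is needed. Both inductions close correctly; your vertex-and-boundary check at the empty-set vertex is the right point to be careful about, and your factorization of the induced boundary $\Homo_0^k(x_{f,y})_{\{t\}}\to w$ through $\bar f$ is exactly the functoriality of cokernels, so there is no gap.
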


\begin{proof}[\bf Proof]
We prove the condition in 
Theorem~\ref{thm:char of adm} $\mathrm{(3)}$ 
to $x_{f,y}$ by induction on the cardinality of $S$. 
For $\# S=1$, the assertion is trivial and 
for $\# S=2$, the assertion follows from Example~\ref{ex:adm squ} 
and the standard result in the category theory 
Lemma~\ref{lem:attach two} below. 
For $\# S\geq 3$, notice that we have the following equalities 
$$\Homo_k\Tot x_{f,y}=\Homo_k\Tot x=0 \ \ \text{for $k>0$ and}$$
$$x_{f,y}|^{\emptyset}_{S\ssm\{s\}}=
{(x|_{S\ssm\{s\}}^\emptyset)}_{f,y}
\ \ \text{for any $s\in S$}. $$
Hence we get the desired result by inductive hypothesis.
\end{proof}

\begin{lem}
\label{lem:attach two}
For the commutative diagram below in a category, 
$${\footnotesize{\xymatrix{
\bullet \ar[r] \ar[d] 
\ar@{}[rd]|{\bf I}
& 
\bullet \ar@{=}[r]^{\id} \ar[d]_b 
\ar@{}[rd]|{\bf II} 
&
\bullet \ar[d]^{ab}\\
\bullet \ar[r] &
\bullet \ar[r]_a & 
\bullet,}}}$$
if the square {\rm\textbf{I}} is Cartesian and the morphism $a$ 
is a monomorphism, then the square {\rm\textbf{I$+$II}} is also 
Cartesian. 
\qed
\end{lem}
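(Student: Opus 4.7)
The plan is to reduce this to the standard pasting lemma for pullback squares by first checking that square \textbf{II} is itself Cartesian.

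First I would verify that, because $a$ is a monomorphism, the square \textbf{II}
\[
\xymatrix{
\bullet \ar@{=}[r]^{\id} \ar[d]_b & \bullet \ar[d]^{ab}\\
\bullet \ar[r]_a & \bullet
}
\]
is Cartesian. Given any test object $X$ equipped with morphisms $f\colon X\to \bullet$ (into the lower-middle vertex) and $g\colon X\to\bullet$ (into the upper-right vertex) satisfying $af=ab g$, the assumption that $a$ is monic forces $f=bg$. Hence $g$ is the unique morphism $X\to\bullet$ whose compositions with $\id$ and $b$ recover $g$ and $f$ respectively, verifying the universal property.

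Next I would invoke the standard pasting lemma: given two horizontally adjacent commutative squares \textbf{I} and \textbf{II} such that \textbf{II} is Cartesian, the composite rectangle \textbf{I}$+$\textbf{II} is Cartesian if and only if \textbf{I} is Cartesian. Applying this lemma with our \textbf{I} Cartesian by hypothesis and \textbf{II} Cartesian by the previous paragraph yields that \textbf{I}$+$\textbf{II} is Cartesian, as required.

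There is essentially no obstacle here; the only substantive input is the observation that a ``degenerate'' square whose top edge is an identity is Cartesian exactly when the opposite edge is a monomorphism, after which the pasting lemma does all the work. If one preferred a direct verification instead, one would simply chase elements (or generalized elements): a cone over the outer rectangle consists of morphisms into the lower-left and upper-right vertices whose images in the lower-right vertex agree after applying $a$; monicity of $a$ promotes this to agreement in the middle-bottom vertex, and the pullback property of \textbf{I} then produces the required unique factorization through the upper-left vertex.
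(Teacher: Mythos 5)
Your proof is correct. The paper states this lemma without proof (it is flagged as a standard categorical fact and closed with a QED box), so there is no argument of the authors' to compare against; your route---checking that the degenerate square \textbf{II} is Cartesian precisely because $a$ is a monomorphism, and then invoking the pasting lemma for pullback squares---is the standard argument and is carried out correctly.
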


\begin{prop}
\label{prop:char of adm for fib}
Let $\fx=\{x_s\overset{d_s^x}{\rinf} x\}_{s\in S}$ be 
a family of subobjects in $\cA$. 
Then $\Fib \fx$ is an admissible $S$-cube if and only if 
a family $\fx$ is universally admissible in $\cP(\fx)$. 
Here $\cP(\fx)$ is as in Example~\ref{ex:well-powered abelian category}.
\end{prop}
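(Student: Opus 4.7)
I proceed by induction on $n := \#S$. The cases $n = 1$ and $n = 2$ dispose of themselves: for $n=1$, $\Fib \fx = (x_s \rinf x)$ is admissible by hypothesis while universal admissibility is vacuous; for $n=2$, $\Fib \fx$ is by construction the Cartesian square of two subobjects, hence admissible by Example~\ref{ex:adm squ}, and universal admissibility holds automatically by Remark~\ref{rem:univ adm seq}(2). Assume from now on that $n \geq 3$ and the equivalence is known for cardinalities strictly less than $n$.

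For sufficiency, assume $\fx$ is universally admissible. Remark~\ref{rem:univ adm seq}(1) gives universal admissibility of each proper subfamily $\fx_T$, and the ``only if'' part of Remark~\ref{rem:univ adm seq}(3) gives universal admissibility of each meet family $\fx_{S\ssm\{s\}}\wedge x_s$. Applying the induction hypothesis to these and reading off the faces of $\Fib\fx$ via (\ref{equ:face of fib 1}) and (\ref{equ:face of fib 2}), every frontside and backside face of $\Fib\fx$ is admissible. Since $\im d^{t,\Fib\fx}_{\{t\}} = x_t$ as subobjects of $x$, the admissibility part of universal admissibility supplies the distributivity of $(\fx_{S\ssm\{s\}}, x_s)$ in $\cP(\fx)$, which via the kernel formula of Example~\ref{ex:H_0Totds} is precisely the statement that $\Homo_0 \Tot d^{s,\Fib\fx}$ is a monomorphism. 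Feeding this together with the admissibility of the faces into Lemma~\ref{lem:Cal of Tot} kills $\Homo_p \Tot \Fib\fx$ for all $p \geq 1$, and Theorem~\ref{thm:char of adm}(3) then delivers admissibility of $\Fib\fx$.

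For necessity, assume $\Fib\fx$ is admissible. Theorem~\ref{thm:char of adm}(2) guarantees that every face is admissible; together with (\ref{equ:face of fib 1}) and (\ref{equ:face of fib 2}) this means $\Fib(\fx_T)$ is admissible for every proper $T \subset S$ and $\Fib(\fx_{S\ssm\{s\}}\wedge x_s)$ is admissible for every $s \in S$. The induction hypothesis converts these into universal admissibility of each $\fx_T$ and of each $\fx_{S\ssm\{s\}}\wedge x_s$. By the ``if'' part of Remark~\ref{rem:univ adm seq}(3), it remains only to prove that $\fx$ itself is admissible, i.e., that $(\fx_T, x_t)$ is distributive for every non-empty proper $T \subset S$ and every $t \in S\ssm T$. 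When $T \cup \{t\} \subsetneq S$, this follows from the admissibility of $\fx_{T\cup\{t\}}$ that we have just obtained; the remaining case $T = S \ssm \{t\}$ I read off from the $0$-sphericality of $\Fib\fx$ by running the kernel formula of Example~\ref{ex:H_0Totds} in reverse.

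The main bookkeeping obstacle is to keep two strands of the induction available simultaneously: one for the restricted families $\fx_T \subset \cP(x)$, and one for the meet families $\fx_{S\ssm\{s\}}\wedge x_s$, which naturally live inside $x_s$ but whose associated $\Fib$-cubes are the backside faces of $\Fib\fx$. Once that accounting is in place, the genuinely new content of the argument is the identification of the categorical images in Example~\ref{ex:H_0Totds} with the transparent lattice elements $x_t \in \cP(\fx)$, which converts the lattice-theoretic distributivity condition of Remark~\ref{rem:univ adm seq} into the vanishing of $\Homo_1 \Tot \Fib\fx$ and thereby closes the induction.
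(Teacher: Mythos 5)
Your proof is correct and follows essentially the same route as the paper's: induction on $\#S$ with base case Example~\ref{ex:adm squ}, reduction via Theorem~\ref{thm:char of adm} and the face identifications $(\ref{equ:face of fib 1})$, $(\ref{equ:face of fib 2})$ to the three conditions (both kinds of faces admissible plus $0$-sphericality), translation of $0$-sphericality into lattice distributivity via Lemma~\ref{lem:Cal of Tot} and Example~\ref{ex:H_0Totds}, and reassembly by Remark~\ref{rem:univ adm seq}. You merely make explicit what the paper leaves terse, namely the two directions of the equivalence and the split of the distributivity pairs $(\fx_T,x_t)$ into the cases $T\cup\{t\}\subsetneq S$ (handled by the inductive hypothesis on subfamilies) and $T=S\ssm\{t\}$ (handled by $0$-sphericality).
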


\begin{proof}[\bf Proof]
We proceed by induction on the cardinality of $S$. 
If $\# S\leq 1$, the assertion is trivial. 
If $\# S=2$, then the assertion follows 
from Example~\ref{ex:adm squ}. 
Let us assume $\# S \geq 3$. 
First notice that $\Fib \fx$ is admissible 
if and only if the following three conditions hold by 
Theorem~\ref{thm:char of adm} 
and the equalities $\mathrm{(\ref{equ:face of fib 1})}$ and 
$\mathrm{(\ref{equ:face of fib 2})}$ 
in Example~\ref{ex:ass cubes subobj}.\\
$\mathrm{(1)}$ 
The $S\ssm\{s\}$-cube 
$\Fib(\fx_{S\ssm\{s\}})$ is admissible for any $s\in S$.\\
$\mathrm{(2)}$ 
The $S\ssm\{s\}$-cube 
$\Fib(\fx_{S\ssm\{s\}}\wedge x_s)$ is admissible for any $s\in S$.\\
$\mathrm{(3)}$ 
The $S$-cube $\Fib \fx$ is $0$-spherical.\\
Under conditions $\mathrm{(1)}$ and $\mathrm{(2)}$, 
condition $\mathrm{(3)}$ is equivalent to 
the condition that $\Homo_0\Tot d^{s,x}$ is a monomorphism 
for any $s\in S$ by Lemma~\ref{lem:Cal of Tot} and it 
is equivalent to the condition that 
a family $\fx$ is admissible in $\cP(\fx)$ by Example~\ref{ex:H_0Totds}. 
Therefore by Remark~\ref{rem:univ adm seq}, 
a family $\fx$ is universally admissible in $\cP(\fx)$. 
\end{proof}

\section{Double cubes}
\label{sec:double cubes} 

\sn 
In this section, we 
develop an abstract version of 
the main theorem in Theorem~\ref{thm:big adm implies small adm}.

\begin{df}[\bf Disjoint systems]
\label{df:disjoint fam}
Let $S$ be a set and $T$ a subset of $S$. 
A system of subsets $(A,B,C,D)$ of $S$ 
is a {\bf disjoint system of $S$ with respect to $T$} 
if the following three conditions hold.\\
$\mathrm{(1)}$ 
The sets $A$, $B$, $C$ and $D$ are disjoint in each other.\\
$\mathrm{(2)}$ 
The sets 
$A$ and $B$ are contained in $T$.\\
$\mathrm{(3)}$ 
The sets $C$ and $D$ 
are contained in $S\ssm T$. 
\end{df}

\begin{nt}
\label{nt:DPS}
Let $S$ be a set.\\
$\mathrm{(1)}$ 
For any ordered pair of disjoint subsets $(A,B)$ of $S$, 
we set
$$\DP_{(A,B)}(S):=\{(U,V)\in\DP(S);A\subset U\cup V,\ B\cap V=\emptyset \}$$
and regard it as a partially ordered subset of $\DP(S)$. 
Notice that $\DP_{(\emptyset,\emptyset)}(S)=\DP(S)$. 
For any subset $A$ of $S$, 
we shortly write $\DP_A(S)$ for $\DP_{(A,S\ssm A)}(S)$.\\
$\mathrm{(2)}$ 
Let $T$ be a subset of $S$ and an ordered system 
$(A,B,C,D)$ a disjoint system of $S$ 
with respect to $T$. 
We define 
$i_{(T\subset S),(A,B)}^{(C,D)}:\DP_{(A,B)}(T)\to \DP(S)$ 
to be an order preserving map 
which sends an ordered pair $(U,V)$ in $\DP_{(A,B)}(T)$ to an 
ordered pair $(U\cup C,V\cup D)$ in $\DP(S)$. 
If $T=S$, 
we shortly write $i_{(A,B)}$ for 
$i_{(T\subset S),(A,B)}^{(C,D)}$ and 
it is just the inclusion map $\DP_{(A,B)}(S)\to \DP(S)$.
\end{nt}

\begin{df}[\bf Total functor]
\label{df:Total functor}
For any disjoint pair of subsets $(A,B)$ of $S$, 
we define the {\bf Total functor} ({\bf of $(A,B)$}) 
$\Tot_{(A,B)}^S=\Tot_{(A,B)}:\cP(S) \to \DP_{(A,B)}(S)$ 
by sending a subset $T$ of $S$ to an ordered pair $((A\ssm T)\cup(B\cap T),T\ssm B)$. 
For any subset $A$ of $S$, 
we shortly write $\Tot_A^S=\Tot_A$ for $\Tot_{(A,S\ssm A)}$. 
We also write $e_{\emptyset,\emptyset}^S=e_{\emptyset,\emptyset}$ for 
$\Tot^S_{(\emptyset,\emptyset)}$.
\end{df}

\begin{lem}
\label{lem:fundamental oredered set isom}
For any set $S$ and any subset $A$ of $S$, 
we have an isomorphism of partially ordered sets 
$$\Tot_{A}:\cP(S)\isoto \DP_{A}(S)$$
which sends a subset $T$ to an ordered pair of 
disjoint subsets $(T\ominus A,T\cap A)$ of $S$. 
\end{lem}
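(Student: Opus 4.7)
The plan is to show $\Tot_A$ is a bijection with order-preserving inverse by writing the inverse down explicitly. Given $(U,V)\in\DP_A(S)$, we have by definition of $\DP_A(S)=\DP_{(A,S\ssm A)}(S)$ that $U\cap V=\emptyset$, $A\subset U\cup V$, and $(S\ssm A)\cap V=\emptyset$ (hence $V\subset A$). This already suggests the correct inverse: send $(U,V)$ to $T:=V\cup(U\ssm A)$.

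First I would check well-definedness, namely that $\Tot_A(T)=(T\ominus A,T\cap A)$ really lies in $\DP_A(S)$: the two components are obviously disjoint, and $A\subset(T\ominus A)\cup(T\cap A)$ since any $a\in A$ is in $T\cap A$ if $a\in T$ and in $A\ssm T\subset T\ominus A$ otherwise. Also $(S\ssm A)\cap(T\cap A)=\emptyset$. Next I would verify that the two maps are mutually inverse by direct set computation. In one direction, if $T\in\cP(S)$ and $(U,V)=\Tot_A(T)$, then $V\cup(U\ssm A)=(T\cap A)\cup((T\ominus A)\ssm A)=(T\cap A)\cup(T\ssm A)=T$. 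In the other direction, starting from $(U,V)\in\DP_A(S)$ and setting $T=V\cup(U\ssm A)$, one checks $T\cap A=V$ (using $V\subset A$ and $(U\ssm A)\cap A=\emptyset$) and $T\ominus A=(A\ssm V)\cup(U\ssm A)$, and then $U=(A\ssm V)\cup(U\ssm A)$ using $A\subset U\cup V$ together with $U\cap V=\emptyset$.

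It remains to show that both $\Tot_A$ and its inverse are order-preserving. Suppose $T\subset T'$. Then $T\cap A\subset T'\cap A$, giving the second-coordinate containment required by the ordering of $\DP(S)$. For the first-coordinate condition $T\ominus A\subset (T'\ominus A)\cup(T'\cap A)$, note that the right-hand side equals $A\cup T'$, which contains both $A\ssm T$ and $T\ssm A\subset T'$. Conversely, if $(U,V)\leq (U',V')$ in $\DP_A(S)$, then $V\subset V'$ is immediate, and $U\ssm A\subset(U'\cup V')\ssm A=U'\ssm A$ because $V'\subset A$; hence $V\cup(U\ssm A)\subset V'\cup(U'\ssm A)$.

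No real obstacle is expected here: the whole statement is bookkeeping about symmetric differences, and the only slightly non-automatic point is keeping track of the asymmetric condition $V\subset A$ that pins down $\DP_A(S)$ inside $\DP(S)$ and makes the formula $T=V\cup(U\ssm A)$ well-defined as the inverse.
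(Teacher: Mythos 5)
Your proposal is correct and follows exactly the route of the paper's (one-line) proof: exhibit the inverse map $(U,V)\mapsto (U\ssm A)\cup V$. You simply carry out in full the routine verifications (well-definedness, mutual inverseness, order-preservation in both directions) that the paper leaves to the reader, and all of these check out.
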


\begin{proof}[\bf Proof]
The inverse map of $\Tot_{A}$ is given by sending an ordered pair 
$(U,V)$ in $\DP_{A}(S)$ to a subset $(U\ssm A)\cup V$ of $S$.
\end{proof}

\begin{rem}
\label{nt:e_T} 
Let $T$ be a subset of $S$. 
Recall the definition of $e_T$ from Example~\ref{ex:power set} $\mathrm{(3)}$. 
Then the equality 
$e_T=i_{(T,S\ssm T)}^{(\emptyset,\emptyset)}\Tot_{(T,S\ssm T)}$.
$${\footnotesize{ 
\xymatrix{
\cP(S) \ar[r]_{\!\!\!\!\!\!\!\!\sim}^{\!\!\!\!\!\Tot_{T}} \ar[rd]_{e_T} & 
\DP_{T}(S) \ar[d]^{i_{(T,S\ssm T)}^{(\emptyset,\emptyset)}}\\
& \DP(S).
}}}$$
\end{rem}

\sn 
In the rest of this section, 
let $S$ be a finite set and $\cA$ an abelian category.

\begin{nt}[\bf Restriction of double cubes]
\label{nt:partial doble cube} 
Let $T$ be a subset, $(A,B,C,D)$ a disjoint system of $S$ with respect to $T$ 
and $x$ a double $S$-cube. 
We define the {\bf restriction of $x$ to $(A,B)$ along $(C,D)$} 
by composition of the functors 
$x|_{T,(A,B)}^{(C,D)}:=x{(i_{(T\subset S),(A,B)}^{(C,D)})}^{\op}$. 
$$\DP_{(A,B)}(T)^{\op} \onto{i_{(T\subset S),(A,B)}^{(C,D)}} \DP(S)^{\op} \onto{x} \calD.$$
If $T=S$, we shortly 
write $x|_{(A,B)}^{(C,D)}$ for $x|_{(T\subset S),(A,B)}^{(C,D)}$.
\end{nt}

\begin{lemdf}[\bf Patching of cubes]
\label{lemdf:pat of cubes}
$\mathrm{(1)}$ 
A family $\fx=\{x^T\}_{T\in\cP(S)}$ 
of $S$-cubes in a category $\calD$ 
indexed by the subsets of $S$
is a {\bf patching family} if 
it satisfies the following {\bf patching condition}:
\begin{equation}
\label{equ:pat cond}
x^T|_{S\ssm\{t\}}^{\emptyset}=x^{T\ssm\{t\}}|_{S\ssm\{t\}}^{\{t\}}
\end{equation}
for any subset $T$ of $S$ and any element $t\in T$.\\
$\mathrm{(2)}$ 
Then there exists a unique double $S$-cube $\Pat \fx$ in $\calD$ 
such that 
\begin{equation}
\label{equ:pat cond 2}
e_T^{\ast}\Pat \fx =x^T
\end{equation}
for any subset $T$ of $S$.
\end{lemdf}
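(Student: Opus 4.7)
The plan is to split the argument into uniqueness and existence. Uniqueness is immediate: the condition $e_T^{\ast}\Pat\fx = x^T$ pins down $\Pat\fx$ on all vertices and morphisms. Indeed, given any $(A,B) \in \DP(S)$, setting $T=B$ and $U=A\cup B$ yields $e_T(U) = (U\ominus T, U\cap T) = (A,B)$, so $(\Pat\fx)(A,B) = x^B_{A\cup B}$ is forced; and since every morphism in $\DP(S)\cong [2]^S$ is a composition of atomic covering relations each of which factors through some $e_T$, uniqueness on morphisms follows likewise.

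For existence, I define $(\Pat\fx)(A,B) := x^B_{A\cup B}$. The first technical point is consistency, namely $x^T_U = x^{U\cap T}_{U\cup T}$ for all $T,U \subset S$. This follows by iterating the patching condition $\mathrm{(\ref{equ:pat cond})}$: for each $t \in T \ssm U$ one has $x^T_V = x^{T\ssm\{t\}}_{V\cup\{t\}}$ with $V \subset S \ssm\{t\}$, and repeating this over all $t \in T \ssm U$ transports $T$ to $T \cap U$ and $U$ to $T \cup U$. Crucially the same iteration also identifies the boundary morphisms in directions $s'' \in U$, which will be needed when we check $e_T^{\ast}\Pat\fx = x^T$ at the level of morphisms. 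I then define $\Pat\fx$ on the two types of atomic covering relations in $\DP(S)$: for a ``$0\to 1$'' step $(A,B) < (A\cup\{s\},B)$ with $s \notin A\cup B$, use $d^{s,x^B}_{A\cup B\cup\{s\}}$; for a ``$1 \to 2$'' step $(A,B) < (A\ssm\{s\}, B\cup\{s\})$ with $s \in A$, use $d^{s,x^{B\cup\{s\}}}_{A\cup B}$, whose target $x^{B\cup\{s\}}_{(A\cup B)\ssm\{s\}}$ is identified with $x^B_{A\cup B}$ via patching.

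The main obstacle is to verify functoriality, i.e.\ that any two maximal chains between the same endpoints give the same composite. By coherence in the lattice $[2]^S$, it suffices to check the commutativity of rank-two squares corresponding to a pair of atomic steps at two distinct elements $s \neq s'$. Three cases arise: (a) both steps are ``$0 \to 1$'' --- both live in the single cube $x^B$, so the square commutes by cube functoriality of $x^B$; (b) both are ``$1\to 2$'' --- both composites can be rewritten inside the single cube $x^{B\cup\{s,s'\}}$ after applying patching to rewrite the intermediate terms, and the square commutes by functoriality of that cube; (c) one ``$0 \to 1$'' and one ``$1 \to 2$'' --- here the two paths traverse two different cubes ($x^B$ and $x^{B\cup\{s'\}}$), and the argument hinges on the patching equality $x^{B\cup\{s'\}}|^{\{s'\}}_{S\ssm\{s'\}} = x^B|^{\emptyset}_{S\ssm\{s'\}}$ applied not only to objects but also to boundaries in directions $\neq s'$. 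Case (c) is where the heaviest bookkeeping lies, and I expect it to be the bottleneck of the proof. Finally I verify $e_T^{\ast}\Pat\fx = x^T$: on vertices $(\Pat\fx)(e_T(U)) = x^{U\cap T}_{U\cup T} = x^T_U$ by the identification above, and on a boundary $d^{s,x^T}_U$ with $s \in U$, the image of the atomic cover $U \ssm \{s\} < U$ under $e_T$ is a $0\to 1$ step when $s \notin T$ and a $1\to 2$ step when $s \in T$; in both situations our atomic definition, combined with the boundary-level patching identity noted in the consistency step, recovers $d^{s,x^T}_U$ exactly.
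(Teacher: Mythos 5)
Your proposal is correct and follows essentially the same route as the paper: the paper also defines the vertices via the identification $(U,V)=e_T(W)$, checks well-definedness by applying the patching condition one element of $T$ at a time, and then declares that "the definition of boundary morphisms is similar." You simply spell out the boundary-morphism and functoriality details (your cases (a)--(c)) that the paper leaves implicit, and your case (c) correctly isolates the boundary-level consequence of the patching equality as the key point.
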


\begin{proof}[\bf Proof]
For any $(U,V)$ in $\DP(S)$, there exists a pair of 
subsets $T$ and $W$ of $S$ such that 
$$(U,V)=e_T(W)$$
by the equality (\ref{equ:eUV 2}) in Notations~\ref{nt:DPS}. 
We put ${(\Pat \fx)}_{U,V}:=x^T_W$. 
We need to check that this definition does not depend upon the choice 
of subsets $T$ and $W$ of $S$. 
By virtue of the equality (\ref{equ:eUV 2}) again, 
we shall assume that $W\neq S$ and $T\neq \emptyset$ 
and we just need to verify the equality 
$x_W^T=x^{T\ssm\{t\}}_{W\coprod\{t\}}$ 
for any $t\in T$. 
Since $W\subset S\ssm\{t\}$, 
we have the equality 
$x_W^T={(x^T|_{S\ssm\{t\}}^{\emptyset})}_W=
{(x^{T\ssm\{t\}}|^{\{t\}}_{S\ssm\{t\}})}_W=x^{T\ssm\{t\}}_{W\coprod\{t\}} $ 
by the equality (\ref{equ:pat cond}). 
Hence we get the well-definedness of ${(\Pat \fx)}_{U,V}$. 
The definition of boundary morphisms of $\Pat \fx$ is similar. 
\end{proof}

\begin{ex}[\bf Reconstruction of double cubes]
\label{ex:reconst dc}
Let $x$ be a double $S$-cube in a category $\calD$. 
Then the family $\fx=\{e_T^{\ast}x\}_{T\in\cP(S)}$ is a patching family 
and $\Pat \fx=x$. 
\end{ex}

\begin{thm}
\label{thm:big adm implies small adm}
Let $x$ be a double $S$-cube in $\cA$. 
We assume that the following four conditions hold.\\
$\mathrm{(1)}$ 
The $S$-cube $\mathbf{2}^{\ast}x$ is admissible.\\
$\mathrm{(2)}$ 
The double $S$-cube $x$ is monic.\\
$\mathrm{(3)}$ 
The $S$-cube $e_T^{\ast}x$ is fibered 
for any proper subset $T$ of $S$.\\
$\mathrm{(4)}$ 
For any subset $W$ of $S$, 
any pair of non-empty disjoint subsets $U$ and $V$ of $S$ 
such that $W\cup U\neq S$ 
and 
any element $s\in S\ssm (W\cup U)$ and $v\in V$ 
such that $s\neq v$, 
we put $V':=V\ssm\{v\}$ and 
a family of morphisms to $x_{W\ssm V',V'}$ 
$${\tiny{\fx:=\{x_{W\ssm(V\coprod\{k\}),V\coprod\{k\}} 
\onto{d_k^{\fx}}  
x_{W\ssm V',V'}\}_{k\in U}\coprod
\{x_{(W\coprod\{s\})\ssm V',V'} 
\onto{d_s^{\fx}} 
x_{W\ssm V',V'}\}}}$$
where $d_k^{\fx}=x((W\ssm V',V')\leq(W\ssm(V\coprod\{k\}),V\coprod\{k\}))$ 
for any $k\in U$ and 
$d_s^{\fx}=d^{s,x}_{(W\coprod\{s\})\ssm V',V'}$.
Then the morphism 
$$\Homo_0\Tot d^{s,\Fib\fx}:\Homo_0\Tot\Fib\fx|_U^{\{s\}} 
\to \Homo_0\Tot\Fib \fx|_U^{\emptyset}$$
is a monomorphism.\\
Then the $S$-cube $e_S^{\ast}x$ is also an admissible $S$-cube.
\end{thm}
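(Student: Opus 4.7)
The plan is to establish admissibility of $e_S^{\ast}x$ via Proposition~\ref{prop:char of adm for fib} by reducing it to universal admissibility of an explicit family in the lattice $\cP(x_{\emptyset,\emptyset})$, and then to verify that family using Corollary~\ref{cor:adm seq cor}. The case $\#S=1$ is immediate: $e_S^{\ast}x$ then has a single boundary morphism $x_{\emptyset,\{s\}}\to x_{\{s\},\emptyset}$, which is a monomorphism by (2). So assume $\#S\geq 2$.

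First, by (1) and Corollary~\ref{cor:adm cube is fib}, the $S$-cube $\mathbf{2}^{\ast}x$ is fibered; combined with (3), Lemma~\ref{lemdf:fib dou cubes}(2) forces the double cube $x$ itself to be fibered, so in particular $e_S^{\ast}x$ is fibered. Since $x$ is monic by (2), Corollary~\ref{cor:fib discribe} gives the lattice description $x_{U,V}\cong \fa^{\wedge U}\wedge\fb^{\wedge V}$ with $\fa=\{a_s:=x_{\{s\},\emptyset}\}_{s\in S}$ and $\fb=\{b_s:=x_{\emptyset,\{s\}}\}_{s\in S}$; monicity of the morphism $x_{\emptyset,\{s\}}\to x_{\{s\},\emptyset}$ additionally exhibits $b_s\leq a_s$. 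Under this identification the canonical family of subobjects attached to the fibered cube $e_S^{\ast}x$ is $\{\fa^{\wedge S\ssm\{s\}}\wedge b_s\}_{s\in S}=\fb\wedge\fa^{\wedge S}$, so by Proposition~\ref{prop:char of adm for fib} admissibility of $e_S^{\ast}x$ reduces to universal admissibility of $\fb\wedge\fa^{\wedge S}$ in $\cP(x_{\emptyset,\emptyset})$.

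To prove the latter I apply Corollary~\ref{cor:adm seq cor} (universally-admissible case) to the pair $(\fa,\fb)$. Its hypothesis (1), that $\fb$ be universally admissible, follows from (1) of the theorem: $\mathbf{2}^{\ast}x$ is an admissible, fibered, monic $S$-cube whose vertex at $\{s\}$ is $b_s$, so Proposition~\ref{prop:char of adm for fib} applies. Since $\cP(x_{\emptyset,\emptyset})$ is modular (Example~\ref{ex:well-powered abelian category}), Remark~\ref{rem:adm seq cor rem} lets me restrict the corollary's hypothesis (2) to proper $W\subseteq S$, disjoint non-empty $U,V$ with $\#V\geq 2$, $s\in S\ssm(W\cup U)$ with $W\cup U\neq S$, and $v\in V\ssm\{s\}$; absorbing $\fb^{\wedge V\ssm\{v\}}$ into the tighter $\fb^{\wedge V}$, the required distributivity becomes
\begin{equation*}
\Bigl(\bigvee_{u\in U}b_u\Bigr)\wedge\fb^{\wedge V}\wedge\fa^{\wedge W}\wedge a_s=\bigvee_{u\in U}\bigl(b_u\wedge\fb^{\wedge V}\wedge\fa^{\wedge W}\wedge a_s\bigr).
\end{equation*}

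Finally, I must derive this equality from condition (4). Unfolding the family $\fx$ in (4) via Corollary~\ref{cor:fib discribe} identifies $\im d_k^{\fx}=\fa^{\wedge W\ssm(V\coprod\{k\})}\wedge\fb^{\wedge V\coprod\{k\}}$ for $k\in U$ and $\im d_s^{\fx}=\fa^{\wedge(W\coprod\{s\})\ssm V'}\wedge\fb^{\wedge V'}$ as subobjects in $\cP(x_{\emptyset,\emptyset})$, and Example~\ref{ex:H_0Totds} translates the mono-ness of $\Homo_0\Tot d^{s,\Fib\fx}$ into distributivity of the pair $(\{\im d_k^{\fx}\}_{k\in U},\im d_s^{\fx})$. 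Using $b_s\leq a_s$, $\fb^{\wedge V}\leq\fb^{\wedge V'}$, and $s\notin W\cup U$, this distributivity collapses to the displayed equation. The main obstacle is precisely this book-keeping step: expanding the images without assuming $W$ disjoint from $U\cup V$, and verifying that the distributivity supplied by Example~\ref{ex:H_0Totds}, once the absorption rules in the modular lattice are applied, coincides exactly with the distributivity hypothesis of Corollary~\ref{cor:adm seq cor}(2).
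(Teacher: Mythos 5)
Your proposal follows the paper's own proof essentially verbatim: reduce admissibility of $e_S^{\ast}x$ to universal admissibility of $\fb\wedge\fa^{\wedge S}$ in $\cP(x)$ via Corollary~\ref{cor:fib discribe} and Proposition~\ref{prop:char of adm for fib}, translate hypotheses $\mathrm{(1)}$ and $\mathrm{(4)}$ into the lattice conditions via Proposition~\ref{prop:char of adm for fib} and Example~\ref{ex:H_0Totds}, and conclude with Corollary~\ref{cor:adm seq cor} together with Remark~\ref{rem:adm seq cor rem}. The only blemish is your displayed equation, whose left-hand side should be $\bigl(\bigvee_{u\in U}(b_u\wedge\fb^{\wedge V}\wedge\fa^{\wedge W})\bigr)\wedge a_s$ rather than $\bigl(\bigvee_{u\in U}b_u\bigr)\wedge\fb^{\wedge V}\wedge\fa^{\wedge W}\wedge a_s$ (the join must be taken of the already-met terms, as in Definition~\ref{df:dis, reg, adm seq}); with that correction it is exactly the distributivity that Example~\ref{ex:H_0Totds} extracts from condition $\mathrm{(4)}$, and the argument goes through.
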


\sn
To prove the theorem, 
we utilize the following lemma 
which is a standard result in the category theory.

\begin{lem}
\label{lem:cat fund}
Let $\calD$ be a category. 
Then\\
$\mathrm{(1)}$ 
For a pair of composable morphisms 
$\bullet \onto{f} \bullet \onto{g} \bullet$ in $\calD$, 
if $gf$ is a monomorphism, 
then $f$ is also a monomorphism.\\
$\mathrm{(2)}$ 
In the commutative diagram in $\calD$ below, 
$${\footnotesize{\xymatrix{
\bullet \ar[r] \ar[d] \ar@{}[dr]|{\rm{\bf{I}}} & 
\bullet \ar[r]^b \ar[d] \ar@{}[dr]|{\rm{\bf{II}}} & 
\bullet \ar[d]\\
\bullet \ar[r] \ar[d]_a \ar@{}[dr]|{\rm{\bf{III}}} & 
\bullet \ar[r] \ar[d] \ar@{}[dr]|{\rm{\bf{IV}}} & 
\bullet \ar[d]\\
\bullet \ar[r] & \bullet \ar[r] & \bullet,
}}}$$
if the big square 
{\rm{\bf{I}}}$+${\rm{\bf{II}}}$+${\rm{\bf{III}}}$+${\rm{\bf{IV}}} is 
a Cartesian square and 
if the morphisms $a$ and $b$ in the diagram above are monomorphisms, 
then the squares {\rm{\bf{I}}}$+${\rm{\bf{II}}}, 
{\rm{\bf{I}}}$+${\rm{\bf{III}}} and {\rm{\bf{I}}} are Cartesian.
\qed
\end{lem}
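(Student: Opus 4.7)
Part $(1)$ is the standard monomorphism cancellation argument: given parallel $u,v$ with $fu=fv$, post-compose with $g$ to obtain $(gf)u=(gf)v$, and conclude $u=v$ because $gf$ is monic.

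For part $(2)$, label the nine vertices of the $3\times 3$ grid as $A_{ij}$ with $i$ the row and $j$ the column, so that $b\colon A_{12}\to A_{13}$ is the top-middle horizontal arrow and $a\colon A_{21}\to A_{31}$ is the middle-left vertical arrow. The hypothesis is that the outer square with corners $A_{11},A_{13},A_{31},A_{33}$ is Cartesian. My plan is to establish the three conclusions in the order I+II, I+III, and then I, using the monomorphism hypotheses at each stage to transfer a universal property from a larger rectangle to a smaller one.

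For I+II Cartesian, given a test object $X$ with $u\colon X\to A_{13}$ and $v\colon X\to A_{21}$ compatible over $A_{23}$, set $v':=av\colon X\to A_{31}$; commutativity of the lower rectangle III+IV (with corners $A_{21},A_{23},A_{31},A_{33}$) shows $(u,v')$ are compatible over $A_{33}$. The outer Cartesian property then yields a unique $\phi\colon X\to A_{11}$ with the prescribed projections to $A_{13}$ and $A_{31}$. Composing $(A_{11}\to A_{21})\circ\phi$ with $a$ gives $(A_{11}\to A_{31})\circ\phi=v'=av$, so the monicity of $a$ upgrades this to $(A_{11}\to A_{21})\circ\phi=v$. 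Uniqueness of $\phi$ as a lift with respect to I+II follows by the same cancellation applied to the difference of two candidates. The next step, I+III Cartesian, is strictly symmetric with the roles of $a$ and $b$ exchanged: push $u\colon X\to A_{12}$ through $b$ to $bu\colon X\to A_{13}$, apply the outer Cartesian property, and cancel $b$ at the end.

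For I Cartesian, given $u\colon X\to A_{12}$ and $v\colon X\to A_{21}$ compatible over $A_{22}$, I push $u$ through $b$ to $bu\colon X\to A_{13}$ and invoke the I+II Cartesian property already established; compatibility of $(bu,v)$ over $A_{23}$ is immediate from the commutativity of the top-middle sub-square and the hypothesis over $A_{22}$. This produces a unique $\phi\colon X\to A_{11}$ with $(A_{11}\to A_{13})\circ\phi=bu$ and $(A_{11}\to A_{21})\circ\phi=v$; factoring the first equation through $A_{12}$ and cancelling the monomorphism $b$ gives $(A_{11}\to A_{12})\circ\phi=u$, and uniqueness follows by the same cancellation. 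The whole argument is routine diagram chasing; the only care required is bookkeeping which sub-rectangle one is using for the commutativity step, and invoking exactly the correct mono hypothesis to remove the auxiliary extension. There is no real obstacle.
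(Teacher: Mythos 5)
Your argument is correct: part (1) is the standard cancellation, and in part (2) each of the three squares is verified by lifting a test cone to the outer Cartesian square (or, for \textbf{I}, to the already-established \textbf{I}$+$\textbf{II}) and then cancelling the monomorphism $a$ or $b$ to recover the missing projection and the uniqueness. The paper states this lemma without proof, as a standard categorical fact, so there is nothing to compare against; your write-up is the expected routine diagram chase.
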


\begin{proof}[\bf Proof of Theorem~\ref{thm:big adm implies small adm}]
If $\# S\leq 2$, the assertion follows from Lemma~\ref{lem:cat fund}. 
We assume that $\# S \geq 3$. 
We put $\fa=\{x_{\{s\},\emptyset}\}_{s\in S}$ and 
$\fb=\{x_{\emptyset,\{s\}} \}_{s\in S}$. 
We have the canonical isomorphism 
$x_{U,V}\isoto \fa^{\wedge U}\wedge \fb^{\wedge V}$ 
as subobjects of $x_{\emptyset,\emptyset}$ 
by Corollary~\ref{cor:fib discribe}. 
Then conditions $\mathrm{(1)}$ and $\mathrm{(4)}$ 
are equivalent to the following conditions 
$\mathrm{(1)'}$ and $\mathrm{(4)'}$ respectively 
by Proposition~\ref{prop:char of adm for fib} 
and Example~\ref{ex:H_0Totds} respectively.\\
$\mathrm{(1)'}$ The family $\fb$ of subobjects in 
$x_{\emptyset,\emptyset}$ is universally admissible in 
a lattice $\cP(x)$.\\
$\mathrm{(4)'}$ For any subset $W$ of $S$, 
any pair of non-empty disjoint subsets $U$ and $V$ of $S$ 
such that $W\cup U\neq S$ and 
any elements $s\in S\ssm (W\cup U)$ and $v\in V$ 
such that $s\neq v$, 
a pair 
$(\fb_{U}\wedge\fb^{\wedge V}\wedge \fa^{\wedge W},
\fb^{\wedge V\ssm\{v\}}\wedge\fa^{\wedge W\coprod\{s\}})$ is distributive.\\
The family of subobjects 
$\fz:=\{x_{S\ssm T,T} \rinf x_{S,\emptyset} \}_{T\in\cP(S)}$ 
of $x_{S,\emptyset}$ 
is universally admissible in $\cP(\fz)$ by 
Example~\ref{ex:well-powered abelian category}, 
Corollary~\ref{cor:adm seq cor} and 
Remark~\ref{rem:adm seq cor rem}. 
Hence $e_{S}^{\ast}x\isoto \Fib \fz$ is admissible 
by Proposition~\ref{prop:char of adm for fib} again.
\end{proof}

\begin{para}
\label{para:proof dct}
\begin{proof}[\bf Proof of Theorem~\ref{thm:dct}]
If $\# S\leq 2$, then the assertion follows from Lemma~\ref{lem:cat fund}. 
If $\# S \geq 3$, then we will prove condition $\mathrm{(3)}$ implies 
conditions $\mathrm{(3)}$ and $\mathrm{(4)}$ in 
Theorem~\ref{thm:big adm implies small adm}. 
Condition $\mathrm{(3)}$ follows 
from Corollary~\ref{cor:adm cube is fib}. 
Inspection shows that $\Fib \fx$ in 
condition $\mathrm{(3)}$ in Theorem~\ref{thm:big adm implies small adm} 
is written by 
compositions of 
restrictions of faces of $e_T^{\ast}x$ 
for some proper subsets $T$ of $S$. 
More precisely, we put 
$U_1:=\{s,v\}\coprod U$ and $U_2:=U\coprod\{s\}$ and 
$$\fx_1:=\{x_{(W\ssm V')\cup\{k\},V'}\to x_{W\ssm V',V'} \}_{k\in U_1},$$
$$\fx_2:=\{x_{W\ssm V,V}\to x_{(W\ssm V')\cup\{v\},V'} \}\coprod\{x_{(W\ssm V')\cup\{k,v\},V'} \to x_{(W\ssm V')\cup\{v\},V'} \}_{k\in U_2}.$$ 
Then $\Fib x$ can be written by the composition of 
$$\Fib \fx_1=\Tot_{(\emptyset,U_1)}x|^{(W\ssm V',V')}_{U_1,(\emptyset,U_1)}$$ 
and 
$$\Fib \fx_2=\Tot_{(\{v\},U_2)}x|_{U_1,(\{v\},U_2)}^{(W\ssm V',V')}$$ 
and $\Tot^{U_2}_{(\emptyset,\{s\})}x|_{U_2,(\emptyset,\{s\})}^{(W\ssm V,V)}$. 
Therefore $\Fib \fx$ is admissible by Corollary~\ref{cor:comp of cubes}. 
Hence we obtain the desired result. 
\end{proof}
\end{para}

\section{Regular adjugates of cubes}
\label{sec:reg adj cubes}

In this section, 
let $A$ be a commutative ring with unit and 
we study the notion about (regular) adjugates of cubes 
in an $A$-linear abelian category.

\begin{nt}
\label{nt:A-lincat}
An {\bf $A$-linear category} $\cC$ 
is a category enriched over the 
category of $A$-modules. 
Namely for any pair of objects $x$ and $y$ in $\cC$, 
the set of morphisms from $x$ to $y$, $\Hom_{\cC}(x,y)$ has 
a structure of $A$-module and 
the composition of morphisms 
$\Hom_{\cC}(x,y)\otimes_A \Hom_{\cC}(y,z) \to \Hom_{\cC}(x,z)$ 
is a homomorphism of $A$-modules 
for any objects $x$, $y$ and $z$ in $\cC$. 
We fix an $A$-linear category $\cC$. 
For an element $a$ in $A$ and an object $x$ in $\cC$, 
we write $a_x$ for the morphism $a\id_x:x \to x$. 
In particular, we notice that we have the equality 
$fa_x=a_yf$ 
for any morphism $f:x\to y$ in $\cC$ and 
an element $a$ in $A$. 
The category of $A$-modules is 
a typical example of $A$-linear category.
\end{nt}

\begin{lemdf}[\bf Adjugate]
\label{lemdf:adjugate}
Let $f:x\to y$ and $\phi:y' \to y$ be morphisms in $\cC$ and 
let us assume that there exists an element $a$ in $A$ 
and a morphism $f^{\ast}:y \to x$ such that 
we have the equalities 
$f^{\ast}f=a_x$ and $ff^{\ast}=a_y$. 
We call a pair $(f^{\ast},a)$ or 
simply $f^{\ast}$ an {\bf adjugate of $f$}. 
Moreover let us assume that the diagram below is the pull-back 
of $f$ along $\phi$. 
$${\footnotesize{\xymatrix{
x' \ar[r]^{f'} \ar[d]_{\phi'} & y' \ar[d]^{\phi}\\
x \ar[r]_f & y \ .
}}}$$
Then there exists a unique morphism ${f'}^{\ast}:y' \to x'$ 
which satisfies the following equalities.\\
$\mathrm{(1)}$ 
${f'}^{\ast}f'=a_{y'}$ and $f'{f'}^{\ast}=a_{x'}$.\\
$\mathrm{(2)}$ 
$\phi' {f'}^{\ast}=f^{\ast}\phi$. 
Namely the following diagram is commutative. 
$${\footnotesize{\xymatrix{
y' \ar[r]^{{f'}^{\ast}} \ar[d]_{\phi} & x' \ar[d]^{\phi'}\\
y \ar[r]_{f^{\ast}} & x \ .
}}}$$
We call the morphism ${f'}^{\ast}$ 
the {\bf adjugate of $f'$ induced from $f^{\ast}$} 
({\bf along $\phi$}).
\end{lemdf}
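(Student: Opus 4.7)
The plan is to construct ${f'}^{\ast}$ directly from the universal property of the pullback square, which identifies $\Hom_{\cC}(y',x')$ with the set of pairs $(g,h)$ consisting of $g\colon y'\to x$ and $h\colon y'\to y'$ satisfying $fg=\phi h$. Guided by the desired equalities $\phi'{f'}^{\ast}=f^{\ast}\phi$ and $f'{f'}^{\ast}=a_{y'}$, I would feed in the pair $(g,h)=(f^{\ast}\phi,\,a_{y'})$. The compatibility that has to be verified is $f(f^{\ast}\phi)=\phi a_{y'}$, and this is immediate: the left hand side equals $(ff^{\ast})\phi=a_y\phi$, and $a_y\phi=\phi a_{y'}$ because $\phi$ is a morphism in the $A$-linear category $\cC$ and both sides are equal to the scaled morphism $a\phi$. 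The pullback then supplies a unique morphism ${f'}^{\ast}\colon y'\to x'$ satisfying property $\mathrm{(2)}$ together with the equality $f'{f'}^{\ast}=a_{y'}$, and moreover gives uniqueness of ${f'}^{\ast}$ subject to just these two conditions.

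The remaining identity ${f'}^{\ast}f'=a_{x'}$ I would obtain by reusing the pullback, in the form that the pair of projections $(\phi',f')$ is jointly monic on $\Hom(x',-)$. Comparing the two candidates ${f'}^{\ast}f'$ and $a_{x'}$ after post-composition with $\phi'$ and with $f'$, one computes
\[
\phi'({f'}^{\ast}f')=(f^{\ast}\phi)f'=f^{\ast}(f\phi')=(f^{\ast}f)\phi'=a_x\phi'=\phi'a_{x'},
\]
using $\phi f'=f\phi'$ and the defining equality $f^{\ast}f=a_x$, together with $A$-linearity of $\phi'$; and
\[
f'({f'}^{\ast}f')=a_{y'}f'=f'a_{x'}
\]
by $A$-linearity of $f'$. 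Both projections agree, so ${f'}^{\ast}f'=a_{x'}$, completing property $\mathrm{(1)}$.

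I do not expect any serious obstacle: the entire argument is a small diagram chase around the given pullback square, and the compatibility $\phi a_{y'}=a_y\phi$ that unlocks the construction is nothing more than the fact that $\phi$ is $A$-linear. The only systematic point of care is the naturality identity $a_z\alpha=\alpha a_w$ for a morphism $\alpha\colon w\to z$ in $\cC$, invoked three times above to commute scalars past $\phi$, $\phi'$, and $f'$.
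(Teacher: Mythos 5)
Your proposal is correct and follows essentially the same route as the paper: the existence of ${f'}^{\ast}$ comes from feeding the cone $(f^{\ast}\phi,\,a_{y'})$ into the universal property of the pullback after checking $ff^{\ast}\phi=a_y\phi=\phi a_{y'}$, and the remaining identity ${f'}^{\ast}f'=a_{x'}$ follows from the joint monicity of the pullback projections, exactly as in the paper's proof. Your version also silently uses the correct orientation of the two composites in condition $\mathrm{(1)}$ (the displayed statement transposes $a_{x'}$ and $a_{y'}$), which is consistent with the paper's own concluding line.
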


\begin{proof}[\bf Proof] 
Since we have the equality 
$ff^{\ast}\phi=a_y\phi=\phi a_{y'}$, 
there exists, 
by the universal property of fiber product,  
a unique morphism 
${f'}^{\ast}:y'\to x'$ 
with which we can fill in the dotted arrow in 
the following commutative diagram:
$${\footnotesize{\xymatrix{
x'\ar[r]^{f'} \ar[d]_{\phi'} \ar@/^3pc/[rrr]^{f'a_{x'}}& y' 
\ar@{-->}[r]^{{f'}^{\ast}} \ar[d]_{\phi} \ar@/^2pc/[rr]_{a_{y'}}& 
x' \ar[r]^{f'} \ar[d]^{\phi'} & y' \ar[d]^{\phi}\\
x \ar[r]_f & y\ar[r]_{f^{\ast}} & x \ar[r]_{f} & y \ .  
}}}$$
Applying the two morphisms 
$a_{x'}:x' \to x'$ and ${f'}^{\ast}f':x'\to x'$ 
to the universal property of $x'$ again, 
we acquire the equality ${f'}^{\ast}f'=a_{x'}$.
\end{proof}

\begin{ex}[\bf Adjugate of matrices]
\label{ex:adj of mat}
Let $X$ be an $n\times n$ matrix whose coefficients are in $A$. 
We regard $X$ as a homomorphism of free $A$-modules $X:A^{\oplus n} \to A^{\oplus n}$. 
We denote the {\bf adjugate} of $X$ by $\adj X$. 
Namely the matrix $\adj X$ is an $n\times n$ matrix 
whose $(i,j)$-entry is given by $(-1)^{i+j}\det X_{j,i}$ where 
$X_{j,i}$ is the $(j,i)$-cofactor of $X$ and 
$\det X_{j,i}$ means the {\bf determinant} of $X_{j,i}$. 
It is well-known that we have the equality 
$(\adj X) X=X\adj X=(\det X) E_n$ 
where $E_n$ is the $n$-th unit matrix. 
Then a pair $(\adj X, \det X)$ is an adjugate of $X$. 
\end{ex}

\begin{ex}[\bf Typical cubes]
\label{ex:abs typ cubes}
Let $\ff_S=\{f_s\}_{s\in S}$ a family of elements in $A$ and $x$ an object in $\cC$. 
We define $\Typ(\ff_S;x)$ an $S$-cube in $\cC$ called the {\bf typical cubes associated with 
$\ff_S$ and $x$} as follows. 
For any $T\in\cP(S)$ and any element $t$ in $T$, 
we put $\Typ(\ff_S;x)_T=x$ and $d_T^{t,\Typ(\ff_S;x)}:={(f_t)}_x$. 
For example, 
if $\cC$ is the category of $A$-modules and $x=A$, 
then $\Typ(\ff_S;A)$ is just the typical cube associated with $\ff_S$ 
as in Example~\ref{ex:mot ex}. 
\end{ex}

\begin{nt}[\bf $x$-regular sequences and $x$-sequences]
\label{nt:regular sequences}
Let us assume that $\cC$ is an additive category and moreover 
for any finite family of morphisms $\{\phi_i:y_i \to x \}_{1\leq i\leq q}$ in $\cC$, 
there exists the cokernel 
$x/(\phi_1,\cdots,\phi_q):=\coker (\oplus y_i \onto{\oplus \phi_i} x)$ in $\cC$. 
For any elements $f_1,\cdots,f_q$ in $A$ and an object $x$ in $\cC$, 
we simply write $x/(f_1,\cdots,f_q)$ for $x/({(f_1)}_x,\cdots,{(f_q)}_x)$. 
Let us fix an object $x$ in $\cC$.\\
$\mathrm{(1)}$ 
A sequence of elements $f_1,\cdots,f_q$ in $A$ is 
an {\bf $x$-regular sequence} if every $f_i$ is a non-unit in $A$, 
if ${(f_1)}_x$ is a monomorphism in $\cC$ and if 
${(f_{i+1})}_{x/(f_1,\cdots,f_i)}$ is 
a monomorphism for any $1\leq i\leq q-1$.\\
$\mathrm{(2)}$ 
A finite family $\{f_s\}_{s\in S}$ of elements in $A$ is an {\bf $x$-sequence} 
if $\{f_s\}_{s\in S}$ forms an $x$-regular sequence 
with respect to every ordering of the members of $\{f_s\}_{s\in S}$. 
\end{nt}

\begin{lem}
\label{lem:char of x-seq}
Let $\ff_S=\{f_s\}_{s\in S}$ be a family of elements in $A$ and 
$x$ an object in $\cC$ and we put 
$\ff_{x,S}:=\{{(f_s)}_x:x\to x \}_{s\in S}$ a family of morphisms in $\cC$. 
Then the following conditions are equivalent.\\
$\mathrm{(1)}$ 
The $S$-cube $\Typ(\ff_S;x)$ is admissible.\\
$\mathrm{(2)}$ 
The family $\ff_S$ is an $x$-sequence.\\
$\mathrm{(3)}$ 
The morphism ${(f_s)}_x:x\to x$ is a monomorphism for any $s\in S$ and 
the family $\ff_{x,S}$ is admissible in $\cP(\ff_{x,S})$.\\
$\mathrm{(4)}$ 
The morphism ${(f_s)}_x:x\to x$ is a monomorphism for any $s\in S$ and 
the family $\ff_{x,S}$ is universally admissible in $\cP(\ff_{x,S})$. 
\end{lem}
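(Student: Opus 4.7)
The plan is to prove the four conditions equivalent by induction on $\# S$, with the base cases $\# S \leq 1$ being essentially immediate from the definitions. The main structural observation underlying everything is that if each $(f_s)_x$ is a monomorphism, then there is a canonical isomorphism of $S\ssm\{k\}$-cubes
$$\Homo_0^k(\Typ(\ff_S;x)) \cong \Typ(\ff_{S\ssm\{k\}}; x/f_k x),$$
since taking the cokernel of multiplication by $f_k$ preserves the typical-cube shape. In parallel, the lattice $\cP(\ff_{x,S})$ of subobjects $\{f_s x\}_{s\in S} \subset x$ admits the same kind of recursive description, with the images $f_s x$ for $s \neq k$ inducing images in $x/f_k x$.

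First I would establish $(1) \Leftrightarrow (2)$ inductively. By Definition~\ref{para:admcube}, the typical cube is admissible iff it is monic and each $\Homo_0^k$ is admissible; by the isomorphism above and the inductive hypothesis, this matches the condition that each $f_s$ be a non-zero-divisor on $x$ and that $\ff_{S\ssm\{k\}}$ be an $(x/f_k x)$-sequence for every $k$, which unravels to the definition of an $x$-sequence. Next, $(4) \Rightarrow (3)$ is immediate from Remark~\ref{rem:univ adm seq}~$(3)$, since universal admissibility includes admissibility.

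For $(2) \Rightarrow (4)$, assuming $\ff_S$ is an $x$-sequence, I would use the classical commutative-algebra identity $\bigwedge_{s\in T} f_s x = (\prod_{s \in T} f_s) x$ (proved by induction on $\# T$ from the fact that for a regular pair $(f,g)$ on $x$, one has $fx \wedge gx = fg x$). This reduces every meet in the lattice $\cP(\ff_{x,S})$ to a product, so the distributivity conditions in Definition~\ref{df:dis, reg, adm seq}~$(4)$ become purely algebraic identities between sums and products of the $f_s$'s that are verified using the regularity of every ordering — this is the universally admissible strengthening of Example~\ref{ex:reg seq}.

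The remaining implication $(3) \Rightarrow (2)$ is the main obstacle. Given monicity and admissibility of $\{f_s x\}$ in $\cP(\ff_{x,S})$, I would argue inductively: translating admissibility via Example~\ref{ex:H_0Totds} shows that $\Homo_0 \Tot d^{s,\Typ(\ff_S;x)}$ is a monomorphism, which when combined with Lemma~\ref{lem:Cal of Tot} propagates sphericality through the cube and forces the frontside faces to be admissible. Applying Theorem~\ref{thm:char of adm} then yields admissibility of the typical cube (condition $(1)$), and we are reduced to the already established $(1) \Leftrightarrow (2)$. The delicate point is that the lattice admissibility, though a priori weaker than universal admissibility, suffices in the specific lattice generated by a family of images $f_s x$ to force pairwise regularity; I expect to need Corollary~\ref{cor:adm seq cor} or a direct computation with the semi-modular inequality to close this gap.
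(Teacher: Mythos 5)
Your proposal is correct, and the logical cycle $1\Leftrightarrow 2$, $2\Rightarrow 4\Rightarrow 3\Rightarrow 2$ does establish the equivalence, but it is routed differently from the paper in two of the four implications. The paper proves $(1)\Rightarrow(4)$ in two lines by citing general machinery: an admissible cube is fibered (Corollary~\ref{cor:adm cube is fib}), so $\Typ(\ff_S;x)\isoto\Fib\ff_{x,S}$, and then Proposition~\ref{prop:char of adm for fib} converts admissibility of a fibered cube into universal admissibility of the family of subobjects. You instead prove $(2)\Rightarrow(4)$ by hand via the identity $\bigwedge_{s\in T}f_sx=\bigl(\prod_{s\in T}f_s\bigr)x$; this works (each required distributivity is, by the kernel isomorphism of Example~\ref{ex:H_0Totds}, exactly the vanishing of $\Ker\bigl((\prod_{v\in V}f_v)_{x/\ff_{U}}\bigr)$, which holds because a product of elements monic on $x/\ff_{U}$ is monic), but it is more work and is only sketched where the paper has a citation. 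Your $(1)\Leftrightarrow(2)$ via $\Homo_0^k(\Typ(\ff_S;x))\cong\Typ(\ff_{S\ssm\{k\}};x/f_kx)$ is a clean direct unwinding of Definition~\ref{para:admcube} that the paper does not use; the paper instead proves $(2)\Rightarrow(1)$ through $0$-sphericality, Lemma~\ref{lem:Cal of Tot} and Theorem~\ref{thm:char of adm}, which is essentially the argument you deploy for $(3)\Rightarrow(1)$. Finally, your closing worry about needing Corollary~\ref{cor:adm seq cor} or a semi-modular computation to ``close the gap'' in $(3)\Rightarrow(2)$ is unfounded: lattice admissibility of $\{f_sx\}_{s\in S}$ is, via the isomorphism $\Ker\bigl((f_s)_{x/\ff_T}\bigr)\isoto\bigl(\ff^{\vee T}\wedge f_sx\bigr)/\bigl(\ff_T\wedge f_sx\bigr)^{\vee T}$, literally the statement that $(f_s)_{x/\ff_T}$ is a monomorphism for every non-empty proper $T$ and $s\notin T$, which is the $x$-sequence condition; the paper disposes of $(3)\Rightarrow(2)$ in one sentence this way, and your detour through $(1)$ is sound but unnecessary. (Both your argument and the paper's silently ignore the requirement in Notations~\ref{nt:regular sequences} that the $f_s$ be non-units, so this is not a defect specific to your proof.)
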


\begin{proof}[\bf Proof] 
We prove that condition $\mathrm{(1)}$ implies 
condition $\mathrm{(4)}$. 
If $\Typ(\ff_S;x)$ is admissible, then 
$\Typ(\ff_S;x)\isoto\Fib \ff_{x,S}$ by 
Corollary~\ref{cor:adm cube is fib}. 
Therefore a family $\ff_{x,S}$ is universally admissible by 
Proposition~\ref{prop:char of adm for fib}. 
Obviously condition $\mathrm{(4)}$ implies condition $\mathrm{(3)}$. 
Since for any non-empty subset $T$ of $S$ and any element $s\in S\ssm T$, 
we have the isomorphism 
$$\Ker({(f_s)}_{x/\ff_{T}}:x/\ff_{T} \to x/\ff_{T})\isoto 
\frac{\ff^{\vee T}\wedge {(f_s)}_x}{{(\ff_S\wedge {(f_s)}_x)}^{\vee T}}$$ 
by Example~\ref{ex:H_0Totds} 
where ${(f_s)}_x$ means the subobject ${(f_s)}_x:x\to x$ of $x$,  
We can easily notice that 
condition $\mathrm{(3)}$ implies 
condition $\mathrm{(2)}$. 
We prove condition $\mathrm{(2)}$ implies
 condition $\mathrm{(1)}$ by 
induction on the cardinality of $S$. 
If $\# S\leq 1$,
the assertion is trivial. 
For $\# S\geq 2$, 
we first notice that we have the equalities
\begin{equation}
\label{equ:Typ rest}
\Typ(\ff_S;x)|_{S\ssm\{s\}}^{\emptyset}=\Typ(\ff_S;x)|_{S\ssm\{s\}}^{\{s\}}=\Typ(\ff_{S\ssm\{s\}};x)
\end{equation}
for any $s$ in $S$. 
Therefore all faces of $\Typ(\ff_S;x)$ are admissible 
by the inductive hypothesis and Example~\ref{ex:adm squ}. 
Moreover by Lemma~\ref{lem:Cal of Tot}, we have the isomorphisms
$$\Homo_p(\Tot\Typ(\ff_S;x))\isoto 
\begin{cases}
x/\ff_S & \text{if $p=0$}\\
\frac{\ff^{\vee S\ssm\{s\}}\wedge {(f_s)}_x}
{{(\ff_{S\ssm\{s\}}\wedge {(f_s)}_x)}^{\vee S\ssm\{s\}}}=0 & \text{if $p=1$}\\ 
\Homo_{p-1}(\Tot\Typ(\ff_{S\ssm\{s\}};x))=0 & \text{if $p\geq 2 $}.
\end{cases}$$
Therefore $\Typ(\ff_S;x)$ is $0$-spherical and hence $\Typ(\ff_S;x)$ is 
admissible by Example~\ref{ex:H_0Totds} and Theorem~\ref{thm:char of adm}.
\end{proof}

\begin{df}[\bf Adjugate of cubes]
\label{df:adjugate of cubes}
$\mathrm{(1)}$ 
An {\bf adjugate of an $S$-cube} $x$ in $\cC$ is 
a pair $\fA=(\fa,\fd^{\ast})$ 
consisting of a family of elements $\fa=\{a_s\}_{s\in S}$ in $A$ 
and a family of morphisms 
$\fd^{\ast}=\{d_T^{t \ast}:x_{T\ssm\{t\}} \to x_T\}_{T\in\cP(S),t\in T}$ 
in $\cC$ which satisfies the following two conditions.\\
$\mathrm{(i)}$ 
We have the equalities 
$d_T^td_T^{t \ast}={(a_t)}_{x_{T\ssm\{t\}}}$ 
and $d_T^{t \ast}d_T^t={(a_t)}_{x_T}$ 
for any $T\in\cP(S)$ and $t\in T$.\\
$\mathrm{(ii)}$ 
For any $T\in\cP(S)$ and any two distinct elements $a$ and $b\in T$, 
we have the equality 
$d_T^bd_T^{a \ast}=d^{a \ast}_{T\ssm\{b\}}d^b_{T\ssm\{a\}}$. 
Namely, the following diagram is commutative. 
$${\footnotesize{\xymatrix{
x_{T\ssm\{a\}} \ar[r]^{d_T^{a \ast}} \ar[d]_{d^b_{T\ssm\{a\}}} & x_T \ar[d]^{d_T^b}\\
x_{T\ssm\{a,b\}} \ar[r]_{d_{T\ssm\{b\}}^{a \ast}} & x_{T\ssm\{b\}}\ .
}}}$$
$\mathrm{(2)}$ 
An adjugate of an $S$-cube $\fA=(\fa,\fd^{\ast})$ is {\bf regular} 
if $\fa$ forms an $x_T$-sequence for any $T\in \cP(S)$.
\end{df}

\begin{ex}
\label{ex:adj cube}
The notion of an adjugate of an $S$-cube is related to the notion of
the adjugate of an matrix in the following sense.
Let $x$ be an $S$-cube and suppose that,
for any $T \in \cP(S)$, the $A$-module $x_T$ is free of finite rank
and its rank is independent of $T$. 
For each $T$ let us fix a basis of $x_T$ over $A$ and 
for each pair $(T,t)$ of $T \in \cP(S)$ and $t \in T$, let
$X_T^t$ denote the matrix for the map 
$d^t_T\colon x_T \to x_{T \ssm \{t\}}$ with respect
to the fixed basis of $x_T$ and $x_{T \ssm \{t\}}$.
For each $s \in S$, let us fix an element $a_s \in A$ which is
a common multiple of the determinants of $X_T^s$, where
$T$ runs over the subsets of $S$ containing $s$.
For each $T \in \cP(S)$ containing $s$, let us choose
$b_T^s \in A$ satisfying $a_s = b_T^s \det X_T^s$.
Let $d_T^{s*} \colon x_{T \ssm \{s\}} \to x_T$ denote
the homomorphism of $A$-modules whose matrix with
respect to the fixed basis is given by the adjugate 
of $X_T^s$ multiplied by $b_T^s$. Then the pair
$(\{a_s \}_{s \in S}, \{d_T^{t *}\}_{T \in \cP(S), t \in T})$
is an adjugate of the $S$-cube $x$.
\end{ex}

\begin{rem}
\label{rem:xast}
Let $x$ be an $S$-cube in $\cC$ and 
$\fA=(\fa=\{a_s\}_{s\in S},
\fd^{\ast}=\{d_T^{t\ast}:x_{T\ssm\{t\}}\to x_T\}_{T\in\cP(S),t\in T})$ 
an adjugate of $x$. 
For any subset $T$ of $S$ such that $\# T\geq 2$ 
and any pair of distinct elements $s$ and $t$ in $T$, 
if $d_T^{s,x}$ or $d_T^{t,x}$ is a monomorphism, 
then the following diagram is commutative. 
$${\footnotesize{\xymatrix{
x_{T\ssm\{s,t\}} \ar[r]^{d_{T\ssm\{s\}}^{t\ast}} 
\ar[d]_{d_{T\ssm\{t\}}^{s\ast}} & 
x_{T\ssm\{s\}} \ar[d]^{d_T^{s\ast}}\\
x_{T\ssm\{t\}} \ar[r]_{d_T^{t\ast}} & x_T . 
}}}$$
In particular, 
if $x$ is monic, 
then we can define the $S$-cocube $x^{\ast}$ by 
$x^{\ast}_T=x_T$ and $d_T^{t,x^{\ast}}:=d_T^{t\ast}$. 
We call $x^{\ast}$ the {\bf adjugate $S$-cocube} 
({\bf associated with an adjugate 
$\fA=(\fa,\fd^{\ast})$}). 
In this case, notice that a pair 
$\fA^{\ast}:=(\fa=\{a_s\}_{s\in S},
\fd:={d_T^{t,x}:x_T \to x_{T\ssm\{t\}}}_{T\in\cP(S),t\in T})$ 
is an adjugate of an $S$-cube $\widehat{x^{\ast}}$. 
We call $\fA^{\ast}$ the {\bf dual adjugate of $\fA$}. 
Obviously we have the equality $\widehat{{(\widehat{x^{\ast}})}^{\ast}}=x$. 
\end{rem}

\noindent
We use the following notation. 
For any pair of sets $U$ and $V$, 
we put 
$U\ominus V:=(U\cup V)\ssm (U\cap V)$.

\begin{ex}[\bf Patching families associated to adjugates]
\label{ex:pat fam ass adj}
Let $x$ be a monic $S$-cube in $\cC$ and 
$\fA=(\fa=\{a_s\}_{s\in S},
\fd^{\ast}=\{d_T^{t\ast}:x_{T\ssm\{t\}}\to x_T\}_{T\in\cP(S),t\in T})$ 
an adjugate of $x$. 
We construct the {\bf patching family 
$\fP_{\fA}:=\{x^{\fA,T}\}_{T\in \cP(S)}$ 
associated to an adjugate $\fA$} 
as follows. 
For $T\in \cP(S)$, 
we define the $S$-cube $x^{\fA,T}$ by setting 
$x^{\fA,T}_U:=x_{U\ominus T}$ for any $U\in \cP(S)$ 
and for any $u\in U$, 
$d_U^{u,x^{\fA,T}}:=d_{U\ominus T}^{u,x}$ if $u\in U\ssm T$ and 
if $d_U^{u,x^{\fA,T}}:=d_{(U\ominus T)\coprod\{u\}}^{u\ast}$ if $u\in U\cap T$. 
As in Remark~\ref{rem:xast}, 
we can easily check that  $x^{\fA,T}$ is an $S$-cube for any $T\in\cP(S)$. 
For any subsets $U$ and $T$ of $S$ and any element $u$ in $U$, 
we define a morphism 
$d_U^{u,\fA^T}:x^{\fA,T}_{U\ssm\{u\}}\to x^{\fA,T}_U$ 
by $d_U^{u,\fA^T}=d^{u\ast}_{U\ominus T}$ if $u\in U\ssm T$ and 
$d_U^{u,\fA^T}=d^{u,x}_{(U\ominus T)\coprod\{u\}}$ if $u\in U\cap T$. 
Then a pair 
$\fA^T=(\fa=\{a_s\}_{s\in S},\fd^{\fA,T}=
\{d_U^{u,\fA^T}:x_{U\ssm\{u\}}^{\fA,T} \to 
x_{U}^{\fA,T} \}_{U\in\cP(S),u\in U})$ is an adjugate of $x^{\fA,T}$. 
Notice that $x^{\fA,\emptyset}=x$ and $\fA^{\emptyset}=\fA$ and 
$x^{\fA,S}=\widehat{x^{\ast}}$ and $\fA^{S}=\fA^{\ast}$. 
We check the patching condition for $\fP_{\fA}$. 
For any $T\in\cP(S)$ and any $t\in T$, 
we need to check the equality 
$x^{\fA,T}|_{S\ssm\{t\}}^\emptyset=
x^{\fA,T\ssm\{t\}}|_{S\ssm\{t\}}^{\{t\}}$. 
We fix a subset $W$ of $S\ssm\{t\}$ and an element $w$ in $W$. 
Then we have the equalities. 
$${(x^{\fA,T}|_{S\ssm\{t\}}^\emptyset)}_W=
x_{W\ominus T}=x_{(W\coprod\{t\})\ominus(T\ssm\{t\})}=
{(x^{\fA,T\ssm\{t\}}|_{S\ssm\{t\}}^{\{t\}})}_W \ \ \text{and}$$
\begin{multline*}
d_W^{w,x^{\fA,T}|_{S\ssm\{t\}}^\emptyset}= 
\begin{cases}
d_{T\ominus W}^{w,x} & \text{if $w\in W\ssm T$}\\
d_{(T\ominus W)\coprod\{w\}}^{w\ast} & \text{if $w\ \in W\cap T$}
\end{cases}\\
=
\begin{cases}
d_{(T\ssm\{t\})\ominus(W\coprod\{t\})}^{w,x} & \text{if $w\in W\ssm T$}\\
d_{\{(T\ssm\{t\})\ominus(W\coprod\{t\})\}\coprod\{w\}}^{w\ast} & \text{if $w\ \in W\cap T$}
\end{cases}
=d_W^{w,x^{\fA,T\ssm\{t\}}|_{S\ssm\{t\}}^{\{t\}}}.
\end{multline*} 
Hence $\fP_{\fA}$ is a patching family of $S$-cubes in $\cC$. 
\end{ex}

\begin{df}[\bf Restriction of adjugates]
\label{df:res of adj}
Let $x$ be an $S$-cube in $\cC$ and 
$\fA=(\fa:=\{a_s\}_{s\in S},\fd^{\ast}:=\{d^{t\ast}_T:x_{T\ssm\{t\}} \to x_T \}_{T\in\cP(S),\ t\in T})$ 
an adjugate of $x$. 
For any disjoint pair of subsets $U$ and $V$ of $S$, 
we define an adjugate $\fA|_U^V:=(\fa|_U^V,\fd^{\ast}|_U^V)$ as follows. 
We put $\fa|_U^V:=\{a_u\}_{u\in U}$ and 
$\fd^{\ast}|_U^V:=\{d^{t\ast}_{V\coprod T}:x_{V\coprod(T\ssm\{t\})} \to x_{V\coprod T} \}_{T\in\cP(U),\ t\in T}$. 
We call $\fA|_U^V$ the {\bf restriction of $\fA$} 
({\bf to $U$ along $V$}).
\end{df}

\begin{lem}[\bf Compatibility of restriction and patching families]
\label{lem:comp of rest and pat}
Let $x$ be a monic $S$-cube in $\cC$. 
Then for any disjoint pair of subsets $U$ and $V$ 
of $S$ and any subset $T$ of $S$, 
if we suppose either the condition $\mathrm{(1)}$ 
or the condition $\mathrm{(2)}$ below, 
then we have the equality
\begin{equation}
\label{equ:comp rest and pat}
x^{\fA,T}|_U^V={(x|_U^V)}^{\fA|_U^V,T\cap U}.
\end{equation}
$\mathrm{(1)}$ 
$T\subset U$.\\
$\mathrm{(2)}$ 
$V\subset T$ and $V=S\ssm U$.
\end{lem}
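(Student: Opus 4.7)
The plan is to verify the asserted equality of $U$-cubes by checking that $x^{\fA,T}|_U^V$ and ${(x|_U^V)}^{\fA|_U^V, T\cap U}$ agree on every vertex $W\in\cP(U)$ and on every boundary morphism. Both sides are $U$-cubes by inspection of the constructions in Definition~\ref{para:rest of cubes} and Example~\ref{ex:pat fam ass adj}, so by (the contravariant analogue of) Lemma~\ref{lem:genofcubemap}$\mathrm{(2)}$ it suffices to compare vertex objects and boundary morphisms in direction $u\in W$. The work is essentially combinatorial, reducing to identities of subsets of $S$ together with tracking which directions use the original boundary and which use the adjugate.

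First I would unwind the definitions on vertices. For $W\in\cP(U)$, the definitions give
$$(x^{\fA,T}|_U^V)_W = x^{\fA,T}_{W\coprod V} = x_{(W\coprod V)\ominus T},\qquad ({(x|_U^V)}^{\fA|_U^V, T\cap U})_W = (x|_U^V)_{W\ominus (T\cap U)} = x_{(W\ominus (T\cap U))\coprod V}.$$
In case $\mathrm{(1)}$, $T\subset U$ forces $T\cap V=\emptyset$ and $T\cap U=T$, so disjointness of $T$ and $V$ yields $(W\coprod V)\ominus T=(W\ominus T)\coprod V=(W\ominus(T\cap U))\coprod V$. In case $\mathrm{(2)}$, since $V\subset T$ and $V=S\ssm U$, the subset $V$ is contained in both $W\coprod V$ and $T$ and thus cancels inside the symmetric difference; together with the identity $T=(T\cap U)\coprod V$ this gives the required matching of vertex subscripts.

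Next I would check that boundary morphisms in each direction $u\in W\subset U$ coincide. By Example~\ref{ex:pat fam ass adj}, the $u$-boundary of $x^{\fA,T}$ at the relevant vertex is $d^{u,x}_\bullet$ when $u\notin T$ and the adjugate morphism $d^{u\ast}_\bullet$ when $u\in T$; meanwhile the $u$-boundary in ${(x|_U^V)}^{\fA|_U^V,T\cap U}$ is $d^{u,x|_U^V}_\bullet$ when $u\notin T\cap U$ and the restricted adjugate $d^{u\ast}_\bullet$ (given by Definition~\ref{df:res of adj}) when $u\in T\cap U$. Since $u\in U$, the conditions $u\in T$ and $u\in T\cap U$ coincide, so the two sides consistently choose the same flavour of morphism; the identity of the actual morphism then follows from tracking indices via the vertex calculation above and applying Remark~\ref{rem:xast} to rewrite the adjugate component.

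The main obstacle is the bookkeeping in case $\mathrm{(2)}$: even though $V\subset T$ produces a reflection in the $V$-directions when passing to $x^{\fA,T}$, restricting along $V$ must undo this reflection in a way compatible with the right-hand side (which never reflects in $V$-directions, since $T\cap U$ lies in $U$). I would streamline this by using the bijection $\Tot_{T\cap U}^U\colon \cP(U)\isoto \DP_{T\cap U}(U)$ from Lemma~\ref{lem:fundamental oredered set isom} to repackage both constructions as pull-backs along the same order-preserving map into $\DP(S)$, reducing the verification to the commutativity of a diagram of indexing posets and so converting the proof into a purely combinatorial check.
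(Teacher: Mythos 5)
Your overall strategy---componentwise comparison of vertices and boundary morphisms after unwinding Definition~\ref{para:rest of cubes}, Example~\ref{ex:pat fam ass adj} and Definition~\ref{df:res of adj}---is exactly the route the paper takes, and your treatment of case $\mathrm{(1)}$ is complete: there $T\cap V=\emptyset$, so $(W\coprod V)\ominus T=(W\ominus T)\coprod V=(W\ominus(T\cap U))\coprod V$, and the boundary-morphism check goes through as you describe.

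Case $\mathrm{(2)}$, however, contains a genuine gap, and it sits precisely at the step you dispatch in one clause. You correctly observe that when $V\subset T$ the set $V$ ``cancels inside the symmetric difference,'' i.e. $(W\coprod V)\ominus T=W\ominus(T\ssm V)=W\ominus(T\cap U)$. But the index you must match on the right-hand side is $(W\ominus(T\cap U))\coprod V$, and these two subsets of $S$ differ by exactly $V$ whenever $V\neq\emptyset$. So your own computation exhibits a \emph{mismatch} of vertex subscripts rather than the ``required matching''; the conclusion is a non-sequitur, and the subsequent boundary-morphism comparison and the repackaging via $\Tot_{T\cap U}^U$ cannot repair an inequality of the underlying index sets. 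Concretely, for $S=\{1,2\}$, $U=\{1\}$, $V=T=\{2\}$ one gets $(x^{\fA,T}|_U^V)_{\emptyset}=x_{\emptyset}$ while $((x|_U^V)^{\fA|_U^V,\emptyset})_{\emptyset}=x_{\{2\}}$. You should be aware that the paper's own proof asserts the corresponding identity $x_{(W\coprod V)\ominus T}=x_{\{W\ominus(T\ssm V)\}\coprod V}$ at the same point, which is likewise valid only under hypothesis $\mathrm{(1)}$; so you have reproduced, not resolved, the delicate point. To close the gap one must either prove the set identity under hypothesis $\mathrm{(2)}$ (it is false for $V\neq\emptyset$) or recognize that the equality that actually follows from the definitions in that case is $x^{\fA,T}|_U^V=(x|_U^{\emptyset})^{\fA|_U^{\emptyset},T\cap U}$, and then rework how the lemma is invoked (e.g. by first applying the patching condition $\mathrm{(\ref{equ:pat cond})}$ to reduce to case $\mathrm{(1)}$).
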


\begin{proof}[\bf Proof] 
We suppose either condition $\mathrm{(1)}$ or condition $\mathrm{(2)}$. 
For any subset $W$ of $S$ and any element $w$ of $W$, 
we have the equalities
$${\{(x^{\fA,T})|_U^V\}}_W=
x^{\fA,T}_{W\coprod V}=
x_{(W\coprod V)\ominus T}=
x_{\{W\ominus (T\ssm V)\}\coprod V}=
{(x|_U^V)}_{W\ominus (T\ssm V)}=
{\{{(x|_U^V)}^{\fA|_U^V,T\cap U}\}}_W \ \ \text{and}$$
\begin{multline*}
d_W^{w,x^{\fA,T}|_U^V}=d_{W\coprod V}^{w,x^{\fA,T}}=
\begin{cases}
d^{w,x}_{(W\coprod V)\ominus T} & \text{if $w\in (W\coprod V)\ssm T $}\\
d^{w\ast}_{(W\coprod V)\ominus T\coprod\{w\}} & \text{if $w\in (W\coprod V)\cap T$}
\end{cases}\\
=
\begin{cases}
d_{W\ominus (T\ssm V)}^{w,x|_U^V} & \text{if $w\in W\ssm (T\ssm V)$}\\
d_{(W\ominus (T\ssm V))\coprod\{w\}}^{w,\fA|_U^V} & \text{if $w\in W\cap (T\ssm V)$}
\end{cases}
=d_W^{w,{(x|_U^V)}^{\fA|_U^V,T\cap U}}.
\end{multline*}
Hence we have the equality $\mathrm{(\ref{equ:comp rest and pat})}$.
\end{proof}

\sn 
The following result is an easy corollary of 
Lemma-Definition~\ref{lemdf:adjugate}.

\begin{cor}
\label{cor:adjugate}
Let $\fx=\{x_s \overset{d_s^x}{\rinf} x\}_{s\in S}$ be a family of subobjects 
in an $A$-linear abelian category. 
If all $d_s^x$ admit non-trivial adjugates 
$(d_s^{x,\ast}:x \to x_s,a_s)$, 
then $\Fib \fx$ also admits the adjugate 
$(\{a_s\}_{s\in S},\{d_T^{t \ast}:x_{T\ssm\{t\}} \to x_T \}_{T\in \cP(S),\ t\in T})$ 
such that 
$d_{\emptyset}^{s \ast}=d_s^{x,\ast}$ for any $s$ in $S$ 
and $d^{t \ast}_T$ is the adjugate 
of $d_T^t$ induced from $d^t_{T\ssm \{s\}}$ along $d^s_{T\ssm\{t\}}$ 
for any subset $T$ of $S$ with $\# T \geq 2$ 
and for any distinct pair of elements $s$ and $t$ in $T$.
\end{cor}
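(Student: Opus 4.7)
The plan is to construct the $d_T^{t\ast}$ by induction on $\#T$, using Lemma-Definition~\ref{lemdf:adjugate} at each step. For $\#T=1$ we set $d_{\{s\}}^{s\ast}:=d_s^{x,\ast}$; conditions (i) and (ii) of Definition~\ref{df:adjugate of cubes} hold by hypothesis (the latter vacuously). Suppose $\#T\geq 2$ and that for every proper $T'\subsetneq T$ and every $t\in T'$ the morphism $d_{T'}^{t\ast}$ has been defined and satisfies both conditions. Since $\Fib\fx$ is fibered and its vertices are subobjects of $x$ (so that every boundary morphism is monic), Lemma~\ref{lem:char of fibered S-cube} furnishes, for any pair of distinct elements $s,t\in T$, a Cartesian square whose corners are $x_T$, $x_{T\ssm\{s\}}$, $x_{T\ssm\{t\}}$, $x_{T\ssm\{s,t\}}$. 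I will apply Lemma-Definition~\ref{lemdf:adjugate} to the inductively given adjugate $(d^{t\ast}_{T\ssm\{s\}},a_t)$ of $d^t_{T\ssm\{s\}}$ along $\phi=d^s_{T\ssm\{t\}}$; this produces a morphism $d_T^{t\ast}\colon x_{T\ssm\{t\}}\to x_T$ satisfying condition (i) automatically, together with the single instance $d^s_T d^{t\ast}_T=d^{t\ast}_{T\ssm\{s\}}d^s_{T\ssm\{t\}}$ of condition (ii) for the pair $(t,s)$.

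The crucial point will be to show that this $d_T^{t\ast}$ is independent of the auxiliary choice of $s\in T\ssm\{t\}$, equivalently that condition (ii) for the pair $(t,s')$ holds for every $s'\in T\ssm\{t\}$. Fix $s'\neq s$; it suffices to verify $d^{s'}_T d^{t\ast}_T=d^{t\ast}_{T\ssm\{s'\}}d^{s'}_{T\ssm\{t\}}$ after composing on the left with the monomorphism $d^s_{T\ssm\{s'\}}$. For the left-hand side I plan to apply, in order: cube commutativity (to rewrite $d^s_{T\ssm\{s'\}}d^{s'}_T$ as $d^{s'}_{T\ssm\{s\}}d^s_T$), the defining identity of $d^{t\ast}_T$ via $s$, the inductive instance of (ii) at $T\ssm\{s\}$ for the pair $(t,s')$, and once more cube commutativity; the expression collapses to $d^{t\ast}_{T\ssm\{s,s'\}}d^s_{T\ssm\{s',t\}}d^{s'}_{T\ssm\{t\}}$. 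The right-hand side reduces to the same expression by a single application of the inductive instance of (ii) at $T\ssm\{s'\}$ for the pair $(t,s)$. Cancelling the monomorphism then yields the desired equality.

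Condition (ii) for a pair $(a,b)$ with $a\neq t$ is nothing but the $(a,b)$-instance obtained from the analogous construction of $d_T^{a\ast}$ (with $b$ playing the role of $s$), so every such relation is accounted for and the induction closes. I expect the only real obstacle to be the bookkeeping in the independence step: several applications of the inductive condition (ii) on different proper faces of $T$, interleaved with the commutativity of the cube $\Fib\fx$, must be chained in exactly the right order, and the final cancellation is legitimate only because $\fx$ consists of subobjects and so every boundary morphism of $\Fib\fx$ is monic. Beyond this I do not foresee further complications.
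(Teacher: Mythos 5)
Your proposal is correct and takes essentially the same route as the paper: the paper likewise defines $d_T^{t\ast}$ inductively as the adjugate of $d_T^t$ induced from $d_{T\ssm\{s\}}^{t\ast}$ along $d_{T\ssm\{t\}}^s$ via Lemma-Definition~\ref{lemdf:adjugate}, and proves independence of the auxiliary element (equivalently all instances of condition $\mathrm{(ii)}$) by precomposing with the monomorphism $d^s_{T\ssm\{u\}}$ and cancelling, exactly as in your chain of equalities. The only cosmetic difference is that the paper packages your four intermediate rewritings into a single commutative diagram whose inner square is the desired identity.
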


\begin{proof}[\bf Proof] 
Let $(\{d_s^{x \ast}:x\to x_s\}_{s\in S}, \{a_s\}_{s\in S})$ be 
a family of adjugates of $\fx$. 
Namely $\{a_s\}_{s\in S}$ is a family of elements in $A$ and 
we have equalities $d_s^{x \ast}d_s^x={(a_s)}_{x_s}$ and 
$d_s^xd_s^{x \ast}={(a_s)}_x$ for any $s\in S$. 
We inductively define a morphism 
$d_T^{t \ast}:x_{T\ssm\{t\}} \to x_T$ 
for any non-empty subset $T$ of $S$ and any element $t$ in $T$. 
For $\# T\geq 2$, we fix an element $s$ in $T$ and 
let $d_T^{t \ast}$ be the adjugate of $d_T^t$ induced from $d_{T\ssm\{s\}}^t$ along $d_{T\ssm\{t\}}^s$. 
If $\# T\geq 3$, for any element $u\in T\ssm\{s,t\}$, 
we consider the following diagram.
$${\footnotesize{\xymatrix{
x_{T\ssm\{s,t\}} \ar[rrr]^{d_{T\ssm\{s\}}^{t \ast}} \ar[ddd]_{d_{T\ssm\{s,t\}}^u} & & & 
x_{T\ssm\{s\}} \ar[ddd]^{d_{T\ssm\{s\}}^u}\\
& x_{T\ssm\{t\}} \ar[r]^{d_{T\ssm\{t\}}^{t \ast}} \ar[d]_{d_{T\ssm\{t\}}^u} 
\ar[ul]^{d_{T\ssm\{t\}}^s} \ar@{}[rd]|\bigstar
& x_{T} \ar[d]^{d_T^u} \ar[ur]_{d_T^s} & \\
& x_{T\ssm\{u,t\}} \ar[r]_{d_{T\ssm\{u\}}^{t \ast}} \ar[dl]_{d_{T\ssm\{t,u\}}^s} 
& x_{T\ssm\{u\}} \ar[dr]^{d_{T\ssm\{u\}}^s} & \\
x_{T\ssm\{s,t,u\}} \ar[rrr]_{d_{T\ssm\{s,u\}}^{t \ast}} & & & 
x_{T\ssm\{s,u\}}\ .
}}}$$
Since $\Fib x$ is an $S$-cube and by the inductive hypothesis, 
all squares except $\bigstar$ in the diagram above are commutative. 
Notice that the morphism $d_{T\ssm\{u\}}^{s}$ 
is a monomorphism and hence the square $\bigstar$ is also commutative. 
Namely we have the equality $d_T^ud^{t \ast}_{T\ssm\{t\}}=d_{T\ssm\{u\}}^{t \ast}d^u_{T\ssm\{t\}}$. 
This equality means that 
the definition of $d_T^{t \ast}$ 
does not depend on the choice of $s$ and 
the pair of families 
$(\{a_s\}_{s\in S},\{d_T^{t \ast}:x_{T\ssm\{t\}} \to x_T \}_{T\in \cP(S),\ t\in T})$ 
forms an adjugate of $\Fib \fx$. 
\end{proof}

\begin{thm}
\label{thm:main thm}
Let $\cC$ be an $A$-linear abelian category and 
$x$ an $S$-cube in $\cC$ 
which admits a regular adjugate $\fA=(\{a_s\}_{s\in S},\fd^{\ast})$. 
Then\\
$\mathrm{(1)}$ 
The $S$-cube $x$ is monic. 
In particular, 
we can define 
the patching family $\fP_{\fA}=\{x^{\fA,T}\}_{T\in\cP(S)}$ 
associated to the adjugate $\fA$ as 
in Example~\ref{ex:pat fam ass adj}.\\
$\mathrm{(2)}$ 
The $S$-cube $x^{\fA,T}$ is admissible for any subset $T$ of $S$. 
In particular, the $S$-cube $x=x^{\fA,\emptyset}$ is admissible. 
\end{thm}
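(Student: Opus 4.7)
The plan is to deduce (1) directly from the adjugate relations, and then to prove (2) by induction on $\#S$ by invoking Theorem~\ref{thm:dct} on a cleverly chosen patching family.

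For (1), fix $T\in\cP(S)$ and $t\in T$. Regularity of $\fA$ says that $\fa$ is an $x_T$-sequence, so in particular $(a_t)_{x_T}$ is a monomorphism (take any ordering of $\fa$ starting with $a_t$). The identity $d_T^{t\ast}d_T^t=(a_t)_{x_T}$ then forces $d_T^t$ to be a monomorphism. Thus $x$ is monic, and the patching family $\fP_{\fA}$ of Example~\ref{ex:pat fam ass adj} is well-defined. The same trick shows each adjugate $d_T^{t\ast}$ is also a monomorphism, since $d_T^t d_T^{t\ast}=(a_t)_{x_{T\ssm\{t\}}}$ is monic.

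For (2), I argue by induction on $\#S$. The base cases $\#S\leq 1$ are immediate from the monicity just established. For the inductive step with $\#S\geq 2$ and fixed $T\in\cP(S)$, the key point is that the $S$-cube $x^{\fA,S\ssm T}$ itself carries the regular adjugate $\fA^{S\ssm T}$: its vertices are a relabelling of the vertices of $x$ by $U\mapsto U\ominus(S\ssm T)$, so the $x_V$-regularity of $\fa$ survives verbatim. Consequently the patching family $\fP_{\fA^{S\ssm T}}$ of Example~\ref{ex:pat fam ass adj} is well-defined, and its associated double $S$-cube $y:=\Pat\fP_{\fA^{S\ssm T}}$ satisfies $ye_{T'}=x^{\fA,(S\ssm T)\ominus T'}$ for every $T'\in\cP(S)$ by a direct check using associativity of $\ominus$; in particular $ye_S=x^{\fA,T}$, the very cube we want to show is admissible.

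It then remains to verify the three hypotheses of Theorem~\ref{thm:dct} for $y$. For hypothesis (1), I factor each length-two step $(\emptyset,W\ssm\{w\})<(\{w\},W\ssm\{w\})<(\emptyset,W)$ in $[2]^S$ into two basic coordinate moves; the corresponding composite in $y\mathbf{2}$ is either $d^{w\ast}d^w$ or $d^w d^{w\ast}$, both equal to $(a_w)_{x_{S\ssm T}}$. Hence $y\mathbf{2}=\Typ(\fa;x_{S\ssm T})$, admissible by Lemma~\ref{lem:char of x-seq}. Hypothesis (2), monicity of every $y(\frakj<\frakj')$, reduces to monicity of basic coordinate moves, each of which is some $d^s$ or some $d^{s\ast}$, already established. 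For hypothesis (3), each face of $ye_{T'}=x^{\fA,(S\ssm T)\ominus T'}$ in direction $s$ is identified, by Lemma~\ref{lem:comp of rest and pat} (together with one preliminary use of the patching condition in the case where $s$ lies in the current adjugate index and the face is the frontside), with a patched cube of the form $(x|_{S\ssm\{s\}}^{V'})^{\fA|_{S\ssm\{s\}}^{V'},T''}$. The restricted cube $x|_{S\ssm\{s\}}^{V'}$ is an $(S\ssm\{s\})$-cube whose restricted adjugate $\fA|_{S\ssm\{s\}}^{V'}$ is again regular (any subfamily of a family which is regular in every order is again regular in every order), so the inductive hypothesis delivers its admissibility. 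Theorem~\ref{thm:dct} then concludes that $ye_S=x^{\fA,T}$ is admissible.

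The main obstacle is the case analysis for hypothesis~(3): of the four possible pairings of (frontside versus backside face) with ($s$ lying in or outside the adjugate index $T''=(S\ssm T)\ominus T'$), three fall directly under Lemma~\ref{lem:comp of rest and pat}, while the remaining one requires an initial invocation of the patching condition to convert it into a case the lemma can handle. Once this dictionary is in place, the argument amounts to a single clever application of the double cube theorem, with the admissibility of $\Typ(\fa;x_{S\ssm T})$---a direct consequence of the regularity of $\fA$---supplying the only analytic input beyond the inductive hypothesis.
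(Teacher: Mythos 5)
Your proposal is correct and follows essentially the same route as the paper: monicity from the adjugate identities plus Lemma~\ref{lem:cat fund}, then the double cube theorem applied to the patching family of an adjugate whose $e_S$-pullback is the cube in question, with hypothesis (1) supplied by $\Typ(\fa;-)$ and Lemma~\ref{lem:char of x-seq} and hypothesis (3) by Lemma~\ref{lem:comp of rest and pat} together with the patching condition and induction on $\#S$. The only cosmetic difference is that the paper first reduces to $T=\emptyset$ and then uses $\Pat\fP_{\fA^{\ast}}$, whereas you handle general $T$ directly via $\Pat\fP_{\fA^{S\ssm T}}$; unfolding the paper's reduction yields exactly your double cube.
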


\begin{proof}[\bf Proof]
$\mathrm{(1)}$ 
We apply Lemma~\ref{lem:cat fund} $\mathrm{(1)}$ 
to pairs of composable morphisms 
$x_T \onto{d_T^t} x_{T\ssm\{t\}} \onto{d_T^{t\ast}} x_T$ and 
$x_{T\ssm\{t\}} \onto{d_T^{t\ast}} x_T \onto{d_T^t} x_{T\ssm\{t\}}$ 
for any subset $T$ of $S$ and any element $t$ of $T$, 
and we notice that the morphisms $d_T^t$ and $d_T^{t\ast}$ are monomorphisms. 
Hence we obtain the desired result. 

\sn
$\mathrm{(2)}$ 
By replacing $x^{\fA,T}$ and $\fA^T$ with $x$ and $\fA$ respectively, 
we may assume 
without loss of generality 
that $T=\emptyset$. 
We check the assertion for $x^{\fA,\emptyset}=x$. 
If $\# S=1$, then $x$ is admissible by assertion $\mathrm{(1)}$. 
We apply Theorem~\ref{thm:dct} to the double $S$-cube $\Pat \fP_{\fA^{\ast}}$. 
Then we obtain the desired result by the equality 
$e_S^{\ast}\Pat\fP_{\fA^{\ast}}={(\widehat{x^{\ast}})}^{\fA^{\ast},S}=\widehat{{(\widehat{x^{\ast}})}^{\ast}}=x$. 
What we need to check is the following conditions:\\
$\mathrm{(A)}$ 
$\mathbf{2}^{\ast}{(\Pat \fP_{\fA^{\ast}})}$ is admissible.\\
$\mathrm{(B)}$ 
All boundary morphisms are monomorphisms.\\
$\mathrm{(C)}$ 
If $\# S\geq 3$, then 
all the faces of $e_T^{\ast}{(\Pat \fP_{\fA^{\ast}})}$ 
are admissible for any proper subset $T$ of $S$.\\
Condition $\mathrm{(B)}$ follows from 
assertion $\mathrm{(1)}$. 
Since we have the equality 
$\mathbf{2}^{\ast}{(\Pat \fP_{\fA^{\ast}})}=\Typ(\fa;x_S)$ and 
$\fa$ is $x_S$-sequence, 
$\mathbf{2}^{\ast}{(\Pat \fP_{\fA^{\ast}})}$ is admissible 
by Lemma~\ref{lem:char of x-seq}. 
Therefore we get the desired result for $\# S=2$. 
For $\# S\geq 3$, 
we need to check that all the faces of 
$e_T^{\ast}\Pat \fP_{\fA^{\ast}}={(\widehat{(x^{\ast})})}^{\fA^{\ast},T}$ 
are admissible for any proper subset $T$ of $S$. 
By replacing $\widehat{(x^{\ast})}$ and 
$\fA^{\ast}$ with $x$ and $\fA$ respectively, 
we shall just check that all the faces of $x^{\fA,T}$ 
are admissible for any proper subset $T$ of $S$. 
We have the equality 
$${(x^{\fA,T})}|_{S\ssm \{k\}}^V=
\begin{cases}
{(x|_{S\ssm\{k\}}^{\{k\}})}^{\fA|_{S\ssm\{k\}}^{\{k\}},
T\ssm\{k\}} & \text{if $V=\{k\}$}\\
{(x|_{S\ssm\{k\}}^{\emptyset})}^{\fA|_{S\ssm\{k\}}^{\{k\}},
T\ssm\{k\}} & \text{if $V=\emptyset$ and $k\notin T$}\\
{(x|_{S\ssm\{k\}}^{\{k\}})}^{\fA|_{S\ssm\{k\}}^{\{k\}},
T\ssm\{k\}} & \text{if $V=\emptyset$ and $k\in T$} 
\end{cases}$$
for any element $k$ of $S$ and any subset $V$ of $\{k\}$ 
by Lemma~\ref{lem:comp of rest and pat} and 
the patching conditions for $\fP_{\fA}$. 
Therefore the $S\ssm\{k\}$-cube ${(x^{\fA,T})}|_{S\ssm \{k\}}^V$ is admissible by induction of the cardinality of $S$. 
\end{proof}

\sn 
The main theorem has the following application.

\begin{cor}
\label{cor:app 1} 
Let $\ff_S=\{f_s\}_{s\in S}$ and $\{g_s\}_{s\in S}$ be 
families of elements in $A$. 
We set $h_s=f_sg_s$ for any $s\in S$ and $\fh_S:=\{h_s\}_{s\in S}$. 
Assume that $\fh_S$ is an $A$-sequence and $f_s$ is not an invertible element in $A$ 
for any $s\in S$. 
Then $\ff_S$ is also an $A$-sequence. 
\end{cor}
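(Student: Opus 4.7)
My plan is to apply the main theorem (Theorem~\ref{thm:main thm}) to the typical $S$-cube $x := \Typ(\ff_S; A)$ in the category of $A$-modules, where the proposed regular adjugate is built directly from the family $\{g_s\}_{s\in S}$. By Lemma~\ref{lem:char of x-seq}, once we show that $x$ is admissible, we recover that $\ff_S$ is an $A$-sequence (using the hypothesis that each $f_s$ is a non-unit).

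The construction of the adjugate is as follows. Set $a_s := h_s = f_sg_s$ for each $s \in S$, and for every $T \in \cP(S)$ and $t \in T$ define $d_T^{t\ast} := (g_t)_A : x_{T\ssm\{t\}} = A \to A = x_T$. Since every boundary morphism of $x$ is $d_T^t = (f_t)_A$, the identities
\[
d_T^t d_T^{t\ast} = (f_tg_t)_A = (a_t)_{x_{T\ssm\{t\}}}, \qquad d_T^{t\ast} d_T^t = (g_tf_t)_A = (a_t)_{x_T}
\]
hold trivially, verifying condition~(i) in Definition~\ref{df:adjugate of cubes}. For condition~(ii), for any pair of distinct $a, b \in T$ both $d_T^b d_T^{a\ast}$ and $d_{T\ssm\{b\}}^{a\ast}d_{T\ssm\{a\}}^b$ equal $(f_bg_a)_A$, by commutativity of $A$. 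So $\fA := (\{h_s\}_{s\in S}, \{d_T^{t\ast}\})$ is an adjugate of $x$.

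Regularity of $\fA$ requires that $\{h_s\}_{s\in S}$ be an $x_T$-sequence for every $T \in \cP(S)$; but since $x_T = A$ for all $T$, this reduces to the hypothesis that $\fh_S$ is an $A$-sequence. Applying Theorem~\ref{thm:main thm} then yields that $x = \Typ(\ff_S; A)$ is admissible. Invoking Lemma~\ref{lem:char of x-seq} (whose non-unit hypothesis on each $f_s$ is part of the given data) gives that $\ff_S$ is an $A$-sequence.

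I do not anticipate any substantive obstacle: all the work has already been absorbed into the main theorem. The only point requiring any care is the verification of the two adjugate axioms, and these reduce instantly to commutativity of $A$ and to the factorisation $h_s = f_s g_s$. Thus the corollary is essentially an immediate instantiation of Theorem~\ref{thm:main thm} in the typical-cube setting.
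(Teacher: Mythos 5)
Your proposal is correct and follows exactly the paper's own argument: equip $\Typ(\ff_S;A)$ with the adjugate $(\fh_S,\{(g_t)_A\})$, observe it is regular because $\fh_S$ is an $A$-sequence, and conclude via Theorem~\ref{thm:main thm} and Lemma~\ref{lem:char of x-seq}. The only difference is that you spell out the verification of the two adjugate axioms, which the paper leaves implicit.
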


\begin{proof}[\bf Proof] 
We put $d_T^{t\ast}=g_t$ for any $T\in\cP(S)$ and $t\in T$ 
and $\fd^{\ast}:=\{d_T^{t\ast}\}_{T\in\cP(S),\ t\in T}$. 
Then a family $(\fh_S,\fd^{\ast})$ is 
a regular adjugate of 
an $S$-cube $\Typ(\ff_S;A)$. 
Therefore $\Typ(\ff_S;A)$ is admissible 
by Theorem~\ref{thm:main thm} 
and a family $\ff_S$ is an $A$-sequence 
by Lemma~\ref{lem:char of x-seq}.
\end{proof}

\sn
We give an explanation about the relationship 
between 
Theorem~\ref{thm:main thm} and theorem of 
Buchsbaum and Eisenbud \cite{BE73}. 

\begin{df}[\bf Fitting ideal]
\label{df:idealminor}
Let $U$ be an $m \times n$ matrix over $A$ 
where $m$, $n$ are positive integers. 
For $t$ in $(\min(m,n)]$ we then denote by $I_t(U)$ the ideal generated by 
the $t$-minors of $U$, that is, 
the determinant of $t\times t$ sub-matrices of $U$.\\ 
For an $A$-module homomorphism 
$\phi:M\to N$ between free $A$-modules of finite rank, 
let us choose a matrix representation $U$ 
with respect to bases of $M$ and $N$. 
One can easily prove that the ideal $I_t(U)$ 
only depends on $\phi$. 
We put $I_t(\phi):=I_t(U)$ 
and call it the {\bf $t$-th Fitting ideal associated with $\phi$}. 
\end{df}

\begin{nt}[\bf Grade]
\label{nt:grade}
For an ideal $I$ in $A$, we put 
$$S_I:=\{n;\text{There are $f_1,\cdots, f_n\in I$ 
which forms an $A$-regular sequence.}\}, \text{ and}$$
$$\grade I:=
\begin{cases}
0 & \text{if $S_I=\emptyset$}\\
\max S_I & \text{if $S_I$ is a non-empty finite set}\\ 
+\infty & \text{if $S_I$ is an infinite set}.
\end{cases}
$$
\end{nt}

\begin{thm}[\bf Buchsbaum-Eisenbud \cite{BE73}]
\label{thm:BEthm}
Assume that $A$ is noetherian. 
For a complex of free $A$-modules of finite rank.
$$F_{\bullet}:0 \to F_s \onto{\phi_s} F_{s-1} \onto{\phi_{s-1}} \to \cdots \to 
F_1 \onto{\phi_1} F_0 \to 0,$$ 
set $r_i=\overset{s}{\underset{j=i}{\sum}}(-1)^{j-i} \rank F_j$. 
Then the following are equivalent:\\
$\mathrm{(1)}$ $F_{\bullet}$ is $0$-spherical.\\
$\mathrm{(2)}$ $\grade I_{r_i}(\phi_i) \geqq i$ for any $i$ in $(s]$.
\end{thm}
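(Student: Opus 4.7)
The theorem is the classical Buchsbaum--Eisenbud criterion, and the plan is to adapt their original strategy. Both hypotheses localize well: $F_{\bullet}$ is $0$-spherical if and only if $(F_{\bullet})_{\mathfrak{p}}$ is $0$-spherical for every prime $\mathfrak{p}$, and $\grade I \geq i$ is equivalent to $\depth I_{\mathfrak{p}} \geq i$ for every prime $\mathfrak{p}$ containing $I$. So I would first reduce to the case where $(A,\mathfrak{m})$ is a local Noetherian ring and then argue by induction on $s$.

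For $(1)\Rightarrow(2)$, the \emph{rank} equality $\rank \phi_i = r_i$ is automatic from additivity of generic ranks along the exact sequence of free modules. For the grade bound, I would argue by contradiction: if $\grade I_{r_i}(\phi_i) < i$, pick a prime $\mathfrak{p}$ minimal over $I_{r_i}(\phi_i)$ with $\depth A_{\mathfrak{p}} < i$. Over $A_{\mathfrak{p}}$, the complex $(F_{\bullet})_{\mathfrak{p}}$ is a free resolution of $(\Homo_0 F_{\bullet})_{\mathfrak{p}}$, but the boundary map $(\phi_i)_{\mathfrak{p}}$ fails to be a split monomorphism (its Fitting ideal lies in $\mathfrak{p} A_{\mathfrak{p}}$), so the projective dimension is at least $i$. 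This contradicts the Auslander--Buchsbaum formula $\pd M + \depth M = \depth A_{\mathfrak{p}}$.

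For $(2)\Rightarrow(1)$, I would proceed by descending induction on $i$, verifying exactness at each positive degree. The central technical tool is the classical McCoy-type criterion: a homomorphism $\psi \colon F \to G$ between finitely generated free modules is injective if and only if $\rank \psi = \rank F$ and $I_{\rank F}(\psi)$ has positive grade. For $i = s$, this immediately gives injectivity of $\phi_s$. For smaller $i$, assuming exactness in degrees above $i$, the truncated subcomplex is a finite free resolution of $\Ker \phi_{i-1}$; I would then apply the McCoy criterion to the induced map $F_i \to \Ker \phi_{i-1}$, using $\rank \phi_i = r_i$ together with $\grade I_{r_i}(\phi_i) \geq i \geq 1$.

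The main obstacle will be the precise interaction between Fitting ideals and the passage from $\phi_i$ to the induced map onto $\Ker \phi_{i-1}$: the rank of $\Ker \phi_{i-1}$ must equal $r_i$ (this relies on the inductive exactness and additivity of ranks), and one must verify that the Fitting ideals of $\phi_i$ faithfully control those of the restricted map. Both rely on careful bookkeeping of depth under localization and quotienting by regular elements, together with the fact that $I_{r_i}(\phi_i)$ is preserved under these operations up to radical. I would treat these as the technical core of the argument, following \cite{BE73}.
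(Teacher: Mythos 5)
First, a point of reference: the paper does not prove Theorem~\ref{thm:BEthm} at all. It is quoted as a classical result from \cite{BE73} (see also \cite[1.4.2]{BJ93}) and used only as background for Proposition~\ref{prop:relation with BE}, so your proposal can only be measured against the classical argument you are reconstructing, not against anything in this paper.

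Your direction $(1)\Rightarrow(2)$ is a sound sketch: localization, the telescoping rank identity $\rank \phi_i=r_i$, and the observation that $\pd (\Homo_0 F_{\bullet})_{\mathfrak{p}}<i$ would force $\im (\phi_i)_{\mathfrak{p}}$ to be a free direct summand of $(F_{i-1})_{\mathfrak{p}}$, hence $I_{r_i}(\phi_i)A_{\mathfrak{p}}=A_{\mathfrak{p}}$, contradicting $I_{r_i}(\phi_i)\subset \mathfrak{p}$, all go through with Auslander--Buchsbaum. The genuine gap is in $(2)\Rightarrow(1)$. The map you propose to feed into the McCoy criterion, $F_i\to \Ker \phi_{i-1}$, is just $\phi_i$ with restricted codomain; its injectivity is equivalent to $\Ker \phi_i=0$, which fails for every $i<s$ with $\im \phi_{i+1}\neq 0$, and in any case $\Ker \phi_{i-1}$ is not free, so no Fitting-ideal criterion for maps of free modules applies to it. What exactness at $F_i$ actually requires is injectivity of the induced map $F_i/\im \phi_{i+1}\to F_{i-1}$, whose source is merely a finitely presented module. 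This is exactly the point where the classical proof brings in machinery your outline omits: either the Peskine--Szpiro acyclicity lemma (a nonzero $\Homo_i$ would, after localizing at one of its associated primes, have depth $0$ while embedding in a module of positive depth produced from the already-exact tail), or an induction on $\depth A_{\mathfrak{p}}$ in which the hypothesis $\grade I_{r_j}(\phi_j)\geq j$ forces $I_{r_j}(\phi_j)\not\subset \mathfrak{p}$ for all $j>\depth A_{\mathfrak{p}}$, so the tail of $(F_{\bullet})_{\mathfrak{p}}$ splits off and one reduces to a shorter complex. Until one of these replaces the appeal to McCoy at the inductive step, the descending induction does not close.
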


\sn
The following proposition is essentially proven in 
the proof of \cite[4.15]{Kos}.

\begin{prop}
\label{prop:relation with BE}
Let $x$ be an $S$-cube of free $A$-modules of finite ranks. 
Assume that all vertexes of $x$ have the same rank and 
$x$ admits a regular adjugate, 
then $\Tot x$ satisfies the condition $\mathrm{(2)}$ in 
Theorem~\ref{thm:BEthm}. 
\qed
\end{prop}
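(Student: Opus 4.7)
Set $n := \# S$ and let $r$ denote the common rank of the vertices of $x$. The degree-$k$ component of $\Tot x$ is $F_k = \bigoplus_{\# T = k} x_T$, so $\rank F_k = \binom{n}{k} r$, and the alternating-sum identity $\sum_{j=i}^n (-1)^{j-i} \binom{n}{j} = \binom{n-1}{i-1}$ gives $r_i = \binom{n-1}{i-1} r$. The plan is to exhibit, for each $s_0 \in S$, a specific $r_i$-minor of $\phi_i$ (the $i$-th boundary of $\Tot x$) that divides a power of $a_{s_0}$, and then to combine these over $s_0 \in S$ to produce a sufficiently long $A$-regular sequence inside $I_{r_i}(\phi_i)$.

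Fix $i \in (n]$ and $s_0 \in S$. I would select the $r_i \times r_i$ submatrix of $\phi_i$ whose columns come from the blocks $x_T$ with $\# T = i$ and $s_0 \in T$, and whose rows come from the blocks $x_{T'}$ with $\# T' = i - 1$ and $s_0 \notin T'$; both index sets have cardinality $\binom{n-1}{i-1}$, and $T \mapsto T \ssm \{s_0\}$ is a bijection between them. For an index pair $(T', T)$ in this submatrix the corresponding block of $\phi_i$ is a signed copy of $d_T^t$ if $T' = T \ssm \{t\}$ for some $t \in T$, and zero otherwise; the constraints $s_0 \in T$ and $s_0 \notin T'$ force $t = s_0$. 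Hence, after reindexing rows via $T \mapsto T \ssm \{s_0\}$, the submatrix is block-diagonal with diagonal blocks $\pm d_T^{s_0}$, and its determinant is
$$
m_{s_0} = \pm \prod_{\substack{T \subset S,\ \# T = i\\ s_0 \in T}} \det(d_T^{s_0}) \;\in\; I_{r_i}(\phi_i).
$$

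The adjugate relation $d_T^{s_0} d_T^{s_0 \ast} = (a_{s_0})_{x_{T \ssm \{s_0\}}}$, applied to the $r \times r$ matrix on the right, yields $\det(d_T^{s_0}) \cdot \det(d_T^{s_0 \ast}) = a_{s_0}^r$. Multiplying these equalities over the $\binom{n-1}{i-1}$ choices of $T$ shows that $m_{s_0}$ divides $a_{s_0}^{r_i}$, hence $a_{s_0}^{r_i} \in (m_{s_0}) \subseteq I_{r_i}(\phi_i)$. Since this holds for every $s_0 \in S$, I obtain $\{a_s^{r_i}\}_{s \in S} \subseteq I_{r_i}(\phi_i)$. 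Because $x_\emptyset$ is a nonzero free $A$-module and $\{a_s\}_{s \in S}$ is an $x_\emptyset$-sequence by the regularity of the adjugate, $\{a_s\}_{s \in S}$ is an $A$-sequence; hence so is $\{a_s^{r_i}\}_{s \in S}$, and any subfamily of size $i \leq n$ forms an $A$-regular sequence of length $i$ contained in $I_{r_i}(\phi_i)$. This gives $\grade I_{r_i}(\phi_i) \geq i$, which is exactly condition $\mathrm{(2)}$ of Theorem~\ref{thm:BEthm}. The only real subtlety is in the second paragraph: checking the block-diagonal structure of the chosen submatrix, which hinges on the fact that restricting to columns whose label contains $s_0$ and rows whose label does not kills every boundary map of $\Tot x$ except $d_T^{s_0}$.
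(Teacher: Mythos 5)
Your argument is correct, and it is worth noting that the paper itself does not prove this proposition at all: it is stated with an immediate \qed and a pointer to the proof of \cite[4.15]{Kos}, so your write-up effectively supplies the omitted argument. The key computation --- extracting, for each $s_0\in S$ and each $i$, the $r_i\times r_i$ submatrix of $\phi_i$ indexed by columns $x_T$ with $s_0\in T$ and rows $x_{T'}$ with $s_0\notin T'$, observing that it is block-diagonal with blocks $\pm d_T^{s_0}$, and using $\det(d_T^{s_0})\det(d_T^{s_0\ast})=a_{s_0}^r$ to conclude $a_{s_0}^{r_i}\in I_{r_i}(\phi_i)$ --- is exactly the mechanism by which a regular adjugate feeds into the Buchsbaum--Eisenbud grade condition, and your identity $r_i=\binom{n-1}{i-1}r$ checks out. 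Two points you pass over quickly but which are harmless: you implicitly assume the common rank $r$ is positive (if $r=0$ the whole complex is zero and the statement is vacuous), and you invoke without reference the standard fact that powers of a (permutable) regular sequence again form a regular sequence; the latter holds in full generality (e.g.\ Matsumura, Theorem~16.1) and, combined with the paper's definition of an $x_T$-sequence (which builds in that each $a_s$ is a non-unit), completes the deduction that any $i$ of the elements $a_s^{r_i}$ give an $A$-regular sequence of length $i$ inside $I_{r_i}(\phi_i)$.
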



\begin{thebibliography}{1234567}
\bibitem[BE73]{BE73}
D.~A.~Buchsbaum and D.~Eisenbud, 
\emph{{W}hat makes a complex exact?}, 
J. Algebra \textbf{25} (1973), p.259-268.

\bibitem[BJ93]{BJ93}
W.~Bruns and J.~Herzog, 
\emph{Cohen-Macaulay rings}, 
Cambridge Studies in Advanced Mathematics \textbf{39} (1993).

\bibitem[Moc13a]{Kos}
S.~Mochizuki, 
\emph{{H}igher $K$-theory of Koszul cubes}, 
Homology, Homotopy and Applications Vol. \textbf{15} (2013), p. 9-51.

\bibitem[Moc13b]{Moc13b}
S.~Mochizuki, 
\emph{{N}on-connective {$K$}-theory of relative exact categories}, 
Preprint, available at arXiv:1303.4133 (2013).
 


\bibitem[MS13]{MS13}
S.~Mochizuki and A.~Sannai, 
\emph{{G}eneralized Koszul resolution}, 
Tokyo J. Math. \textbf{36} (2013), 
p.459-472.

\end{thebibliography}
\end{document}